\newcommand{\quand}{\quad\text{and}\quad}
\theoremstyle{plain}
\newtheorem{main}{Theorem}
\newtheorem{theorem}{Theorem}[section]
\newtheorem{lemma}[theorem]{Lemma}
\newtheorem{proposition}[theorem]{Proposition}
\theoremstyle{remark}
\newcommand{\RR}{{\mathbb R}}
\newcommand{\ZZ}{{\mathbb Z}}
\newcommand{\NN}{{\mathbb N}}
\newcommand{\NF}{{\mathfrak F}}
\renewcommand{\NG}{{\mathfrak G}}
\newcommand{\Diff}{\operatorname{Diff}}
\newcommand{\torus}{{\mathbb T}}
\newcommand{\loc}{{\rm loc}}
\newcommand{\cm}{{\mathfrak m}}
\newcommand{\tcm}{\widetilde\cm}
\newcommand{\hm}{{\widehat m}}
\newcommand{\cB}{{\mathcal B}}
\newcommand{\cC}{{\mathcal C}}
\newcommand{\cE}{{\mathcal E}}
\newcommand{\cF}{{\mathcal F}}
\newcommand{\cO}{{\mathcal O}}
\newcommand{\cP}{{\mathcal P}}
\newcommand{\cU}{{\mathcal U}}
\newcommand{\cV}{{\mathcal V}}
\newcommand{\cW}{{\mathcal W}}
\newcommand{\cX}{{\mathcal X}}
\newcommand{\cY}{{\mathcal Y}}
\newcommand{\cZ}{{\mathcal Z}}
\newcommand{\supp}{{\operatorname{supp\,}}}
\newcommand{\id}{{\operatorname{id}}}
\newcommand{\graph}{\operatorname{graph}}
\newcommand{\Jac}{\operatorname{Jac}}
\newcommand{\tp}{\tilde{p}}
\newcommand{\tcE}{\widetilde{\cE}}
\newcommand{\tf}{{\tilde{f}}}
\newcommand{\tNF}{\widetilde{\NF}}
\newcommand{\tNG}{\widetilde{\NG}}
\newcommand{\tS}{\widetilde{S}}
\newcommand{\trho}{\tilde{\rho}}
\newcommand{\tSigma}{\widetilde{\Sigma}}
\newcommand{\tcF}{\widetilde \cF}
\newcommand{\tm}{\widetilde{m}}
\newcommand{\mE}{m_\cE}
\newcommand{\tmE}{\tm_{\tcE}}
\newcommand{\mS}{m_\Sigma}
\newcommand{\tmS}{\tm_{\tSigma}}
\newcommand{\mSc}{\widehat m}
\newcommand{\tM}{\widetilde{M}}
\newcommand{\tvarphi}{\widetilde{\varphi}}
\title[Absolute continuity, Lyapunov exponents and rigidity]
      {Absolute continuity, Lyapunov exponents and rigidity I : geodesic flows}
\author[A. Avila, M. Viana, A. Wilkinson]
       {A. Avila$^{1,2}$, M. Viana$^2$, and A. Wilkinson$^3$}
\date{\today}
\thanks{$^1$
CNRS UMR 7599, Laboratoire de Probabilit\'es et Mod\`eles
Al\'eatoires, Universit\'e Pierre et Marie Curie, Bo\^\i te Postale
188, 75252 Paris Cedex 05, France.}
\thanks{$2$
IMPA -- Estrada D. Castorina 110, Jardim Bot\^anico, 22460-320 Rio
de Janeiro, Brazil.}
\thanks{$3$
Department of Mathematics, Northwestern University,
2033 Sheridan Road, Evanston, IL 60208-2730, USA}
\email{artur@math.sunysb.edu}
\urladdr{www.impa.br/~avila/}
\email{viana@impa.br}
\urladdr{www.impa.br/~viana/}
\email{wilkinso@math.northwestern.edu}
\urladdr{www.math.northwestern.edu/~wilkinso/}
\begin{document}

\begin{abstract}
We consider volume-preserving perturbations of the time-one map of the geodesic flow
of a compact surface with negative curvature.  We show that if the Liouville
measure has Lebesgue disintegration along the center foliation then the
perturbation is itself the time-one map of a smooth volume-preserving flow,
and that otherwise the disintegration is necessarily atomic.
\end{abstract}

\maketitle

%\tableofcontents %\newpage

\section{Introduction}\label{s.introduction}

If $\cF$ is a foliation with $C^1$ leaves of a compact manifold $M$,
then for any Borel probability measure $\mu$ on $M$, there is a unique
\emph{disintegration} $\{[\mu_x]:  x \in M\}$ of $\mu$ along the leaves of $\cF$.
The elements $[\mu_x]$ are \emph{projective measures} (that is, equivalence classes of
measures up to scaling) and are defined over a full $\mu$-measure set of $x\in M$.
Each representative $\mu_x$ is supported on the leaf $\cF_x$ of the foliation through $x$.
Locally, a representative measure $\mu_x$ can be described as follows.
One fixes a foliation box $\cB$ for $\cF$ with its foliation by local leaves $\{\cF^{\loc}_{x} : x\in \cB\}$.
In this box, $\mu_x$ is simply the conditional measure of $\mu$ relative to $\cF^{\loc}$,
evaluated at $x$. The conditional measures $\{\mu_x: x\in \cB\}$ are probability measures
satisfying
$$
\mu(A) =  \int_\cB \mu_x(A)\, d\mu(x).
$$
for any Borel set $A\subset \cB$, and they are essentially uniquely defined.

The opposing notions of \emph{Lebesgue disintegration} and \emph{atomic disintegration}
are both well-defined; $\mu$ has Lebesgue disintegration along $\cF$ if for $\mu$-almost every $x$,
any representative of $[\mu_x]$ is equivalent to (i.e. has the same zero sets as)
Riemannian volume on $\cF_x$, and $\mu$ has atomic disintegration if $[\mu_x]$ is an atomic class,
for $\mu$-almost every $x$.  Throughout this paper, we restrict to the case where $\mu$ is a
volume measure on $M$, which we will always denote by $m$.  If $\cF$ is a $C^1$ foliation, then
any volume measure has Lebesgue disintegrations along $\cF$, but the converse is false.
A weaker condition than $C^1$ that implies Lebesgue disintegration of volume is
\emph{absolute continuity}: a foliation is  absolutely continuous if holonomy maps between
smooth transversals send zero volume sets to zero volume sets.

Lebesgue disintegration and in particular absolute continuity have long played a central role in
smooth ergodic theory.  Anosov and Sinai~\cite{An67,AS67} proved in the 60's  that the stable and
unstable foliations of globally hyperbolic (or Anosov) systems are absolutely continuous,
even though they fail to be $C^1$ in general. This was a key ingredient in Anosov's celebrated
proof~\cite{An67} that the geodesic flow for any compact, negatively curved manifold is ergodic.

\subsection{Perturbations of the time-one map of a geodesic flow}

Let $\varphi_t:  T^1 S \to T^1 S$ be the geodesic flow on the unit tangent bundle to a closed,
negatively curved surface $S$.  We consider a discretization of this flow, namely its time-one map
$\varphi_1$, and examine the properties of all diffeomorphisms $f$ that are $C^1$-close to $\varphi_1$.

It follows from the work of Hirsch, Pugh, and Shub~\cite{HPS77} that for any such perturbation $f$
of $\varphi_1$, there exists an $f$-invariant \emph{center foliation} $\cW^c = \cW^c(f)$ with smooth leaves,
that is homeomorphic to the orbit foliation $\cO$ of $\varphi_t$.  Moreover, the homeomorphism
$h: T^1 S \to T^1 S$ sending $\cW^c$ to $\cO$ can be chosen close the identity.

The original orbit foliation $\cO$ of $\varphi_t$ is smooth, and hence volume has Lebesgue disintegration
along $\cO$-leaves.  If the perturbation $f$ happens to be the time-one map of a smooth flow,
then $\cW^c$ is the orbit foliation for that flow, and volume has Lebesgue disintegration along $\cW^c$.
In general, however, a perturbation $f$ of $\varphi_1$ has no reason to embed in a smooth flow,
and one can ask whether the disintegration of volume along $\cW^c$-leaves is Lebesgue, atomic, or neither.
We obtain a complete answer to this question when $f$ preserves volume.

\begin{main}\label{t=geodesic}
Let $\varphi_t: T^1 S\to T^1 S$ be the geodesic flow for a closed negatively curved surface $S$ and
let $m$ be the $\varphi_t$-invariant Liouville probability measure.

There is a $C^1$-open neighborhood
$\cU$ of $\varphi_1$ in the space $\Diff^\infty_m(T^1 S)$ of $m$-preserving
diffeomorphisms of $T^1 S$ such that for each $f\in \cU$:
\begin{enumerate}
\item there exists $k\ge 1$ and a full $m$-measure set $Z\subset T^1 S$ that intersects every
center leaf in exactly $k$ orbits of $f$,
\item or $f$ is the time-one map of an $m$-preserving $C^\infty$ flow.
\end{enumerate}
In case (1) $m$ has atomic disintegration, and in case (2) it has Lebesgue disintegration
along the center foliation $\cW^c(f)$.
\end{main}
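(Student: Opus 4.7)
The geodesic flow $\varphi_t$ on $T^1S$ is Anosov with one-dimensional orbits, so $\varphi_1$ is partially hyperbolic with a one-dimensional center bundle $E^c$ tangent to the flow. Partial hyperbolicity is $C^1$-open, so for every $f\in\cU$ we have an invariant splitting $TM=E^s\oplus E^c\oplus E^u$ with each summand one-dimensional, together with the center foliation $\cW^c(f)$ from Hirsch-Pugh-Shub, whose leaves are $C^\infty$ and homeomorphic to $\RR$ or $S^1$. Since the strong stable and unstable foliations $\cW^s$, $\cW^u$ are absolutely continuous, the invariance principle of Avila-Viana applies in our setting, and $f$ is stably ergodic and stably accessible (properties that persist near $\varphi_1$). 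The plan is to run the dichotomy on the center Lyapunov exponent.

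\textbf{Step 1: the center exponent and the dichotomy.} Because $E^c$ is one-dimensional, the center Lyapunov exponent $\lambda^c(f,x)=\lim_n\frac{1}{n}\log\|Df^n|E^c_x\|$ is defined and $f$-invariant $m$-a.e.; by ergodicity it is a constant $\lambda^c(f)$. I split on its sign.

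\textbf{Step 2: nonzero exponent yields atomic disintegration.} Suppose $\lambda^c(f)\neq 0$; after replacing $f$ by $f^{-1}$ we may assume $\lambda^c(f)>0$. Then on $m$-typical leaves $\cW^c(x)$, the restriction of $f$ is uniformly expanding in the Lyapunov sense. I would invoke the Ruelle-Wilkinson argument: a diffeomorphism preserving a probability measure whose disintegration along a $1$-dimensional invariant foliation has positive tangential exponent must have atomic conditionals, supported on finitely many points per leaf. Because $f$ permutes these atoms and has positive center exponent, the atoms inside each leaf break into a fixed number $k\ge 1$ of complete $f$-orbits, which yields the set $Z$ in conclusion (1).

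\textbf{Step 3: zero exponent forces a smooth flow.} Suppose $\lambda^c(f)\equiv 0$. Here I would apply the Avila-Viana invariance principle to the derivative cocycle on $E^c$: vanishing exponent upgrades the disintegration $\{[m_x]\}$ along $\cW^c$ to a family invariant under both stable and unstable holonomies. Combining $su$-invariance with stable accessibility of $f$ forces $[m_x]$ to be Lebesgue on $m$-a.e.\ center leaf (any two $su$-invariant families must agree up to normalization, and the smooth Riemannian length furnishes one such family coming from $\varphi_t$ transported by the HPS conjugacy $h$). To produce the flow, I would use this Lebesgue disintegration to normalize $h$: reparametrize each $\cW^c$-leaf by arclength in a way compatible with $h$, so that the pushforward $\psi_t:=h\circ\varphi_t\circ h^{-1}$ is well-defined modulo the reparametrization. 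One then checks, using the $C^\infty$ regularity of the weak stable/unstable foliations on surfaces (Hopf/Ghys) together with the $su$-holonomy-invariance of the disintegration and the $f$-invariance of $m$, that the infinitesimal generator $X$ of $\psi_t$ is $C^\infty$, whence $\psi_t$ is a smooth $m$-preserving flow with $\psi_1=f$.

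\textbf{Main obstacle.} Step 2 is comparatively standard once one has ergodicity and $1$-dimensional center. The serious difficulty lies in Step 3: the invariance principle only produces a measurable $su$-invariant disintegration, and bootstrapping this through the accessibility class to genuine $C^\infty$ smoothness of the center vector field is the heart of the rigidity theorem. This is where the special two-dimensional geometry of $S$ enters crucially, via the higher regularity of the weak foliations available for surface geodesic flows, which is what lets the measure-theoretic invariance be promoted to differentiable structure and finally to the existence of the smooth flow extending $f$.
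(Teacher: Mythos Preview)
Your overall dichotomy on $\lambda^c$ and your treatment of the $\lambda^c\neq 0$ case are in line with the paper. The serious problems are all in Step~3.

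\textbf{Wrong cocycle for the invariance principle.} You say you apply the invariance principle ``to the derivative cocycle on $E^c$.'' But $E^c$ is one-dimensional, so the projectivized derivative cocycle has point fibers and carries no information beyond the exponent itself. The invariance principle must be applied to a \emph{smooth} cocycle whose fibers are (copies of) the center leaves: the paper builds a bundle $\cE\to M$ with $\cE_v\cong\cW^c(f)_v$ and a fiber-preserving map $\NF$ covering $f$, equipped with $su$-holonomy, and shows its fiberwise exponent equals $\chi^c$. It is this nonlinear cocycle, together with an $\NF$-invariant measure built from the disintegration of $m$ along $\cW^c$, to which the ASV theorem is applied.

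\textbf{Missing second dichotomy.} Vanishing exponent does \emph{not} by itself force Lebesgue conditionals. The invariance principle only outputs a continuous, $su$-holonomy-invariant family $\{\hm_v\}$ of fiber measures. The paper then proves a support dichotomy: either each $\hm_v$ has finite support in a fundamental domain (which again yields alternative~(1), the atomic case), or $\supp\hm_v=\cE_v$. Your argument that ``the smooth Riemannian length furnishes one such family coming from $\varphi_t$ transported by the HPS conjugacy $h$'' does not work: $h$ is only a homeomorphism, so the transported length is neither smooth nor $su$-invariant, and there is no uniqueness statement that would pin down $[m_x]$ as Lebesgue. Only in the full-support case does one argue (via a homogeneity lemma: any two points of $\supp\hm_v$ are exchanged by a $C^1$ fiber diffeomorphism preserving $\hm_v$) that $\hm_v$ is equivalent to Lebesgue with continuous density.

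\textbf{Wrong source of smoothness.} The regularity upgrade is not a consequence of special surface geometry or Hopf/Ghys-type results on weak foliations. Rather, once one has a continuous, nowhere-vanishing, $f$-invariant vector field $X$ tangent to $\cW^c$ (built from the density $d\hm_v/d\lambda_v$), the derivatives $Df^k\mid E^c$ are uniformly bounded in $k$, so $f$ is $r$-bunched for every $r>0$; this makes all $su$-holonomies between center leaves $C^\infty$. Feeding this back into the homogeneity argument (and invoking a $C^\infty$-homogeneity theorem of Repov\v{s}--Skopenkov--\v{S}\v{c}epin) shows the density is $C^\infty$ along fibers; separate arguments give $C^\infty$ along $\cW^s$ and $\cW^u$ using the smooth disintegration of volume along those foliations; and Journ\'e's lemma assembles these into global $C^\infty$ smoothness of the flow. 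Your proposed route via $h\circ\varphi_t\circ h^{-1}$ does not produce a smooth flow without exactly this bootstrap.
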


Theorem~\ref{t=geodesic} gives conditions under which one can recover the action of a Lie group
(in this case $\RR$) from that of a discrete subgroup (in this case $\ZZ$).
These themes have arisen in the related context of measure-rigidity for algebraic partially
hyperbolic actions by Einsiedler, Katok, Lindenstrauss~\cite{EKL06}.
It would be interesting to understand more deeply the connections between these works.
Results of a similar flavor to Theorem~\ref{t=geodesic} but for the stable and unstable foliations
of Anosov diffeomorphisms and flows have been proved by Benoist, Foulon and Labourie \cite{BFL92,BL93}.

\subsection{Lyapunov exponents and absolute continuity}

The hidden player in Theorem~\ref{t=geodesic} is the concept of \emph{center Lyapunov exponents}.
A real number $\chi$ is a \emph{center Lyapunov exponent} of the partially
hyperbolic diffeomorphism $f: M\to M$ at $x\in M$ if there exists a nonzero vector
$v\in E^c_x$ such that
\begin{equation}\label{e=lyaplim}
\lim_{n\to\infty} \frac{1}{n} \log \|Df^n(v)\| = \chi.
\end{equation}
If $f$ preserves $m$, then Oseledec's theorem implies that the limit in \eqref{e=lyaplim}
exists for each $v\in E^c_x$, for $m$-almost every $x$.
When the dimension of $E^c(f)$ is $1$, the limit in \eqref{e=lyaplim} depends only on $x$,
and if in addition $f$ is ergodic with respect to $m$, then the limit takes a single
value $m$-almost-everywhere. When we refer to a \emph{a center exponent with respect to volume},
we mean a value in \eqref{e=lyaplim} assumed on a positive volume set, and by
\emph{the center exponent with respect to volume} we mean a (the) value assumed almost everywhere.
Deep connections between Lyapunov exponents and geometric properties of invariant measures have
long been understood \cite{Pes76,Pes77,Le84a,LY85a,LY85b,Ka80,BPS99}.
In the context of partially hyperbolic systems, some of these connections have come
to light more recently.

Absolute continuity holds in great generality for the stable and unstable foliations of partially
hyperbolic systems~\cite{BP74, PSh72}, and for Pesin stable and unstable laminations of non-uniformly
hyperbolic systems~\cite{Pes76} (see also Pugh, Shub~\cite{PSh89} for the non-conservative case).
On the other hand, and in sharp contrast, Shub, Wilkinson~\cite{SW00} showed that
\emph{center} foliations of partially hyperbolic systems are, often, \emph{not} absolutely continuous.
What is more, Ruelle, Wilkinson~\cite{RW01} showed that, in a similar setting, the disintegration
of volume along center leaves is atomic, supported on finitely many points.

The mechanism behind these results is nonvanishing center exponents: for each $f$ in the open set
of ergodic diffeomorphisms $\cV\subset \Diff^\infty_\omega(\torus^3)$ constructed in \cite{SW00},
the center Lyapunov exponent with respect to volume is nonzero.
The examples in $\cV$ are obtained by perturbing the trivial extension of a hyperbolic
automorphism of $\torus^2$ on $\torus^3 = \torus^2\times \RR/\ZZ$.
By \cite{HPS77}, the center foliation $\cW^c(f)$ for each $f\in \cV$ is
homeomorphic to the trivial $\RR/\ZZ$ fibration of $\torus^3 = \torus^2\times \RR/\ZZ$;
in particular, the center leaves are all compact.  The almost everywhere exponential growth
associated with nonzero center exponents is incompatible with the compactness of the
center foliation, and so the full volume set with positive center exponent must meet
almost every leaf in a zero set (in fact a finite set) for these examples. In general,
for conservative systems with compact one-dimensional leaves, absolute continuity cannot occur
unless the center Lyapunov exponent vanishes, and this is a kind of codimension-one condition.
Absolute continuity is much more common among dissipative systems, as observed by Viana, Yang~\cite{VY1}.

Similar results hold for perturbations of the time-one map of volume-preserving Anosov flows:
there exist opens sets of perturbations with nonvanishing center exponents
(Dolgopyat~\cite{Dol04}), and the results in \cite{RW01} also imply that volume must have
atomic disintegration for these examples.

The heart of understanding the general perturbation of these and similar examples, then,
is to see what happens when the center Lyapunov exponents \emph{vanish}.
For this, we use tools that originate in the study of random matrix products.
The general theme of this work, summarized by Ledrappier in \cite{Le86} is that
``entropy is smaller than exponents, and entropy zero implies  deterministic."
Original results concerning the Lyapunov exponents of random matrix products,
due to Furstenberg, Kesten~\cite{FK60, Fu63}, Ledrappier~\cite{Le86}, and others,
have been extended in the past decade to deterministic products of linear cocycles
over hyperbolic systems by Bonatti, Gomez-Mont, Viana~\cite{BGV03,BoV04,Almost}.
The Bernoulli and Markov measures associated to random products in those earlier
works are replaced in the newer results by invariant measures for the hyperbolic
system carrying a suitable product structure.

Recent work of Avila, Viana~\cite{AV3} extends this hyperbolic theory from linear
to \emph{smooth} (diffeomorphism) cocycles, and we use these results in a central way.
Also important for our proofs here are the results of Avila, Santamaria, Viana~\cite{ASV}
for cocycles over volume-preserving partially hyperbolic systems, both linear and
smooth.

The ideas introduced in this work have already given rise to further applications
in distinct settings:  the study of measures of maximal entropy \cite{HHTU10b} and
physical measures \cite{VY1} for partially hyperbolic diffeomorphisms with compact
1-dimensional center foliations.

\section{Preliminaries}

We start by recalling a few useful facts concerning foliations and
partially hyperbolic diffeomorphisms.

\subsection{Foliations}\label{ss=fol}

Let $M$ be a manifold of dimension $d\ge 2$. A \emph{foliation (with $C^r$ leaves)}
is a partition $\cF$ of the manifold $M$ into $C^r$ submanifolds of dimension $k$,
for some $0 < k <d$ and $1 \le r \le \infty$, such that for every $p\in M$ there
exists a continuous local chart
$$
\Phi: B_1^k \times B_1^{d-k} \to M \quad\text{($B_1^m$ denotes the unit ball in $\RR^m$)}
$$
with $\Phi(0,0)=p$ and such that the restriction to every horizontal $B_1^k \times \{\eta\}$ is a
$C^r$ embedding depending continuously on $\eta$ and whose image is contained in some $\cF$-leaf.
The image of such a chart $\Phi$ is a \emph{foliation box} and the $\Phi(B_1^k \times \{\eta\})$
are the corresponding \emph{local leaves}.

\subsection{Partially hyperbolic diffeomorphisms}\label{ss.ph}

We say that a diffeomorphism $f: M\to M$ of a compact Riemannian manifold $M$
is \emph{partially hyperbolic} if there exists a continuous, $Df$-invariant
splitting $T M = E^u \oplus E^c \oplus E^s$, into nonzero bundles, and a
positive integer $k$ such that for every $x \in M$,
\begin{equation*}%\label{e.def ph}
\begin{aligned}
\|(Df^k \mid E^u(x))^{-1}\|^{-1} > & 1 > \|Df^k \mid E^s(x)\| \, , \\
\|(Df^k \mid E^u(x))^{-1}\|^{-1} > & \|Df^k \mid E^c(x)\| \geq \\
\ge & \|(Df^k \mid E^c(x))^{-1}\|^{-1} > \|Df^k \mid E^s(x)\| \, .
\end{aligned}
\end{equation*}
The time-one map of an Anosov flow is partially hyperbolic and, since partial
hyperbolicity is a $C^1$-open property, so are its perturbations. Since the geodesic
flow for any closed, negatively-curved manifold is Anosov, the maps
considered in this paper are all partially hyperbolic.
For a discussion of partial hyperbolicity, with examples and open questions,
see \cite{BPSW,PSh04,HHU07s}.

The stable and unstable bundles $E^s$ and $E^u$ of a partially hyperbolic diffeomorphism
are always uniquely integrable, tangent to stable and unstable foliations,
$\cW^s$ and $\cW^u$ respectively. The center bundle $E^c$ is not always integrable
(see \cite{HHUcoh}), but in many examples of interest, such as the time-one map of an
Anosov flow and its perturbations, $E^c$ is tangent to a foliation $\cW^c$,
as are the bundles $E^{cs}=E^c\oplus E^s$ and $E^{cu}=E^c\oplus E^u$.
We say that a partially hyperbolic diffeomorphism $f$ is \emph{dynamically coherent} if
there exist $f$-invariant \emph{center stable} and \emph{center unstable} foliations
$\cW^{cs}$ and $\cW^{cu}$, tangent to the bundles $E^{cs}$ and $E^{cu}$, respectively;
intersecting their leaves one obtains an invariant center foliation $\cW^c$ as well.
Most of the facts here are proved in \cite{HPS77}. More detailed discussions can be
found in \cite{Beyond}, \cite{BW08b} and \cite{BW10}.
%As we are concerned with the dynamical properties of center foliations,
%dynamical coherence will be a basic assumption throughout this paper.
It is not known whether every perturbation of a dynamically coherent diffeomorphism
is dynamically coherent, but this does hold for systems that are \emph{plaque expansive}.

The notion of plaque expansiveness was introduced by Hirsch, Pugh, and Shub~\cite{HPS77},
who proved among other things that any perturbation of a plaque expansive diffeomorphism
is dynamically coherent. Roughly, $f$ is plaque expansive if pseudo orbits that respect
local leaves of the center foliation cannot shadow each other too closely (in the case of
Anosov diffeomorphisms, plaque expansiveness is the same as expansiveness, which is automatic).
Plaque expansiveness holds in a variety of natural settings;
in particular if $f$ is dynamically coherent, and either $\cW^c$ is a $C^1$ foliation
or the restriction of $f$ to $\cW^c$ leaves is an isometry, then $f$ is plaque expansive,
and so every $C^1$ perturbation of $f$ is dynamically coherent.
Moreover, plaque expansive systems enjoy the previously mentioned stability property:
the center foliations of any two perturbations are homeomorphic via a map that intertwines
the dynamics on the space of center leaves.

Because they are uniformly contracted/expanded by the dynamics, the leaves of stable and
unstable foliations are always contractible; this is not the case for center foliations.
One illustration is the previously mentioned example of the time-one map of an Anosov flow,
for which the center foliation $\cO$ has both compact leaves (corresponding to periodic
orbits of the flow) and non-compact ones.

If $f$ is dynamically coherent, then each leaf of $\cW^{cs}$ is simultaneously subfoliated by
the leaves of $\cW^c$ and by the leaves of $\cW^s$. Similarly $\cW^{cu}$ is subfoliated
by $\cW^c$ and $\cW^u$. This implies that for any two points $x,y\in M$ with $y\in \cW^s_x$
there is a neighborhood $U_x$ of $x$ in the leaf $\cW^c_x$ and a homeomorphism
$h^s_{x,y}: U_x \to \cW^c_y$ with the property that $h^s_{x,y}(x) = y$ and in general
$$
h^s_{x,y}(z) \in \cW^s_z \cap \cW^c_{\loc, y}.
$$
We refer to $h^s_{x,y}$ as a \emph{(local) stable holonomy map}.
We similarly define unstable holonomy maps between local center leaves. We note that,
because the leaves of stable and unstable foliation are contractible,
the local holonomy maps $h^\ast_{x,y}$ for $\ast\in\{s,u\}$ are well-defined and are uniquely
defined as germs by the endpoints $x,y$.

We say that $f$ \emph{admits global stable holonomy maps} if for every  $x,y\in M$ with
$y\in \cW^s_x$ there exists a homeomorphism $h^s_{x,y}: \cW^c_x \to \cW^c_y$ with the
property that $h^s_{x,y}(x) = y$ and in general $h^s_{x,y}(z) \in \cW^s_z\cap \cW^c_y$.
Since global stable holonomy maps must agree locally with local stable holonomy, we use the
same  notation $h^s_{x,y}$ for both local and global. We similarly define global unstable
holonomy maps and say that $f$ \emph{admits global $su$-holonomy maps} if it admits both
global stable and unstable holonomy. Note that if $f$ admits global $su$-holonomy,
then all leaves of $\cW^c$ are homeomorphic.

Given $r>0$, we say that $f$ is \emph{$r$-bunched} if there exists $k\geq 1$ such that:
\begin{equation}\label{eq.rbunched}
\begin{aligned}
& \sup_p \| D_p f^k \mid E^s \| \, \|(D_p f^k \mid E^c)^{-1}\|^r < 1,\\
& \sup_p \|(D_p f^k \mid E^u)^{-1} \| \, \|D_p f^k \mid E^c\|^r < 1, \\
& \sup_p \|D_p f^k \mid E^s \| \, \|(D_p f^k \mid E^c)^{-1}\| \, \|D_p f^k \mid E^c\|^r < 1,\\
& \sup_p \|(D_p f^k \mid E^u)^{-1}\| \, \|D_p f^k \mid E^c\| \, \|(D_p f^k \mid E^c)^{-1}\|^r < 1.
\end{aligned}
\end{equation}
When $f$ is $C^r$ and dynamically coherent, these inequalities ensure that the leaves of
$\cW^{cs}$, $\cW^{cu}$, and $\cW^c$ are $C^r$. If $f$ is $C^{r+1}$ and dynamically coherent
they also imply that the local stable and  local unstable holonomies are $C^r$ local
diffeomorphisms. See Pugh, Shub, Wilkinson~\cite{PSW97,Wliv}.  We say that $f$ is {\em center bunched} if it is $1$-bunched.  If $E^c$ is one-dimensional, then $f$ is automatically center bunched.  For a fixed $r$,
the $r$-bunching property is $C^1$ open: any sufficiently $C^1$ small perturbation of an $r$-bunched diffeomorphism is $r$-bunched.

The ergodic theoretic properties of center bunched partially hyperbolic diffeomorphisms are in many ways well understood.  The state of the art is the following result.
\begin{theorem}\label{t.BW}\cite{BW10} Let $f$ be $C^2$, volume preserving, partially hyperbolic and center bunched.
If $f$ is (essentially)  accessible, then $f$ is ergodic with respect to the volume measure.
\end{theorem}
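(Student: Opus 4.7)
The plan is to prove ergodicity by the Hopf argument, adapted to the partially hyperbolic setting via a density-point analysis along Pesin-style "julienne" neighborhoods. I would first reduce ergodicity to showing that for every continuous $\varphi: M \to \RR$, the forward and backward Birkhoff averages
\[
\varphi^\pm(x) = \lim_{n\to\infty} \frac{1}{n} \sum_{i=0}^{n-1} \varphi(f^{\pm i}(x))
\]
coincide almost everywhere with a constant. By the Birkhoff ergodic theorem and $f$-invariance of $m$, both $\varphi^+$ and $\varphi^-$ are defined $m$-a.e.\ and agree a.e.; let $\tilde\varphi$ denote their common value. Uniform contraction on $\cW^s$ shows $\varphi^+$ is constant on each stable leaf, and similarly $\varphi^-$ is constant on each unstable leaf. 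Consequently, every level set $A = \{\tilde\varphi \le c\}$ is \emph{essentially} $\cW^s$-saturated and essentially $\cW^u$-saturated: up to an $m$-null set, $A$ is a union of whole stable leaves, and also a union of whole unstable leaves.

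The heart of the argument is to upgrade this bi-essential saturation to essential $su$-saturation: any bi-essentially saturated set $A$ with $m(A) > 0$ is, up to null sets, a union of entire accessibility classes. To do this I would work at a Lebesgue density point $x$ of $A$ and measure density along a carefully chosen basis of neighborhoods adapted to the partially hyperbolic splitting, the so-called \emph{juliennes} $J_n(x)$: these are $f^{-n}$-images of small plaques stretched along $E^u$ and thin in $E^c\oplus E^s$, built so as to be both dynamically coherent and to have bounded distortion for volume. The key technical lemma, whose proof occupies most of the work, is that if $y\in \cW^s_{\loc}(x)$, then the stable holonomy $h^s_{x,y}$ maps $J_n(x)$ to sets that are \emph{comparable} in $m$-measure to $J_n(y)$, with constants independent of $n$. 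The $r$-bunching hypothesis (with $r=1$) is precisely what is needed to compare the rates at which juliennes shrink in the center direction versus the rates at which stable holonomy distorts volume transversally. The analogous statement holds for unstable holonomy. Granted this, essential stable saturation of $A$ propagates density from $x$ to $h^s_{x,y}(x) = y$, and likewise along unstable holonomies, so $x$ being a density point of $A$ forces $y$ to be one as well. Iterating along $su$-paths shows that $A$ is essentially saturated by entire accessibility classes.

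The conclusion is then immediate: by essential accessibility, $m$-a.e.\ point lies in a single accessibility class, so $A$ has either zero or full $m$-measure. This forces $\tilde\varphi$ to be a.e.\ constant for every continuous $\varphi$, giving ergodicity.

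The main obstacle is the julienne comparison lemma for holonomies. The stable and unstable foliations are absolutely continuous, but their holonomies are only H\"older in general; a naive Fubini argument along accessibility paths cannot conclude anything. Center bunching is the precise quantitative hypothesis that lets one show the holonomy maps distort julienne densities by bounded amounts in the limit $n\to\infty$, so that the density-point propagation along $\cW^s$ and $\cW^u$ survives the passage through the singular transverse structure of these foliations. Everything else --- the Hopf argument, the passage from bi-essential saturation to ergodicity via accessibility --- is comparatively soft.
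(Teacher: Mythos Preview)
The paper does not prove this statement at all: Theorem~\ref{t.BW} is quoted from \cite{BW10} as a black box and used throughout without any argument given. So there is no ``paper's own proof'' to compare against.

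That said, your sketch is a faithful outline of the Burns--Wilkinson argument in \cite{BW10}: the Hopf argument reduces ergodicity to showing bi-essentially $su$-saturated sets are essentially accessibility-saturated, and this is achieved via a julienne density basis whose quasi-invariance under stable and unstable holonomy is exactly where the center bunching hypothesis enters. Your identification of the julienne comparison lemma as the technical core is accurate. One small correction: the propagation of density points is not carried out by iterating along arbitrary $su$-paths directly, but rather one shows that the set of julienne density points of $A$ is genuinely (not just essentially) $s$-saturated and $u$-saturated, and then invokes accessibility. The distinction matters because essential saturation alone does not survive concatenation along $su$-paths.
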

A partially hyperbolic diffeomorphism is called \emph{accessible} if any
two points in the ambient manifold may be joined by an \emph{$su$-path}, that is, a piecewise smooth path
such that every leg is contained in a single leaf of $\cW^s$ or a single leaf of $\cW^u$. More generally,
the diffeomorphism is \emph{essentially accessible} if, given any two sets with positive volume,
one can join some point of one to some point of the other by an $su$-path.  Pugh and Shub~\cite{PSh96}
have conjectured that accessibility holds for a $C^r$ open and dense subset of the partially hyperbolic diffeomorphisms, volume-preserving or not.

Concerning the conjecture, it was shown by Dolgopyat, Wilkinson~\cite{DW03} that accessibility holds for a $C^1$-open and dense subset of all partially hyperbolic diffeomorphisms, volume preserving or not.
Moreover, Didier~\cite{Di03} proved that accessibility is $C^1$-open for systems with $1$-dimensional
center bundle; that is accessibility implies {\em stable accessibility} for such systems.
More recently, Rodriguez Hertz, Rodriguez Hertz, Ures~\cite{HHU08b} checked the complete
conjecture for conservative systems whose center bundle is one-dimensional: accessibility is
$C^r$-dense among $C^r$ partially hyperbolic diffeomorphisms, for any $r\ge 1$.
A version of this statement for non-conservative diffeomorphisms was obtained in~\cite{BHHTU}.
It remains open whether $C^r$-density still holds when $\dim E^c >1$.

Returning to the context of this paper, stable accessibility has been shown to hold for the time-one map of any geodesic flow in negative curvature \cite{KK96} and more generally for the time-one map of any mixing Anosov flow \cite{BPW00}.  Hence the map $\varphi_1$ in Theorem~A -- and in fact any $C^1$ small perturbation of $\varphi_1$ --  is accessible.  Combining this fact with Theorem~\ref{t.BW}, we see
$\varphi_1$ is {\em stably ergodic}: any $C^2$, volume preserving diffeomorphism that is sufficiently $C^1$ close to $\varphi_1$ is ergodic with respect to the volume measure.

\section{Disintegration of measure}\label{s.measuretheoretic_preliminaries_one}

We begin with a general discussion of disintegration of measures.

\subsection{Measurable partitions and disintegration of measure}\label{ss.disintegration}

Let $Z$ be a polish metric space, let $\mu$ be a finite Borel measure on $Z$,
and let $\cP$ be a partition of $Z$ into measurable sets. Denote by $\hat\mu$ the
induced measure on the $\sigma$-algebra generated by $\cP$,
which may be naturally regarded as a measure on $\cP$.

A \emph{system of conditional measures} (or a \emph{disintegration}) of $\mu$ with respect to
$\cP$ is a family $\{\mu_P\}_{P\in \cP}$ of probability measures on $Z$ such that
\begin{enumerate}
\item $\mu_P (P) = 1$ for $\mu$-almost every $P \in \cP$;
\item Given any continuous function $\psi: Z\to \RR$, the function $P\mapsto
\int\psi \,d\mu_P$ is
measurable, and
$$\int_M\psi\,d\mu = \int_\cP\left( \int \psi \,d\mu_P \right) d\hat\mu(P).
$$
\end{enumerate}

It is not always possible to disintegrate a probability measure with respect to a partition --
we discuss examples below -- but disintegration is always possible if $\cP$ is a measurable partition.
We say that $\cP$ is a \emph{measurable partition} if there exist measurable subsets
$E_1,E_2,\ldots, E_n\ldots $ of $Z$ such that
\begin{equation}\label{eq.measurablpartition}
\cP = \{E_1, Z\setminus E_1\} \vee \{E_2, Z\setminus E_2\} \vee \cdots \quad\quad\mod 0.
\end{equation}
In other words, there exists a full $\mu$-measure subset $F_0\subset Z$ such that,
for any atom $P$ of $\cP$, we have
$$
P\cap F_0 =  E_1^\ast\cap E_2^\ast \cap \cdots \cap F_0
$$
where $E_i^\ast$ is either $E_i$ or $Z\setminus E_i$, for $i\geq 1$.
Our interest in measurability of a partition derives from the following fundamental result.

\begin{theorem}[Rokhlin \cite{Ro52}] If $\cP$ is a measurable partition,
then there exists a system of conditional measures relative to $\cP$.
It is essentially unique in the sense that two such systems coincide in a set of full $\hat\mu$-measure.\end{theorem}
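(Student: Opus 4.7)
The plan is to construct the disintegration by approximating $\cP$ by an increasing sequence of finite partitions and then passing to the limit using Doob's martingale convergence theorem. Since $\cP = \bigvee_{n\ge 1} \{E_n, Z\setminus E_n\}$ mod $0$, set
\[
\cP_n = \{E_1, Z\setminus E_1\}\vee\cdots\vee\{E_n, Z\setminus E_n\}.
\]
Each $\cP_n$ is a finite partition refining $\cP_{n-1}$, and $\sigma(\cP_n) \uparrow \sigma(\cP)$ mod $0$. At level $n$, the disintegration is tautological: for each atom $A$ of $\cP_n$ with $\mu(A)>0$, and each $z\in A$, put $\mu_z^{(n)} := \mu|_A/\mu(A)$.

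Next I would fix a countable family $\{\psi_k\}_{k\ge 1}$ that is dense in $C_b(Z)$ for uniform convergence on compact sets (such a family exists because $Z$ is Polish, e.g.\ by embedding $Z$ into a compact metrizable space such as $[0,1]^\NN$). For each $k$, the function $\varphi_{k,n}(z) := \int\psi_k\, d\mu_z^{(n)}$ is, by construction, a version of the conditional expectation $E(\psi_k \mid \sigma(\cP_n))$. Applied to this $L^2$-bounded martingale, Doob's convergence theorem yields a $\mu$-full measure set on which $\varphi_{k,n}(z)$ converges to $E(\psi_k \mid \sigma(\cP))(z)$. Intersecting over all $k$ gives a full-measure set $F_0\subset Z$ on which all these limits exist simultaneously. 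On $F_0$, the assignment $\psi_k\mapsto \lim_n \varphi_{k,n}(z)$ is a positive linear functional on a dense subspace of $C_b(Z)$, with norm at most $1$; it extends continuously and, by the Riesz representation theorem, is represented by a Borel probability measure $\mu_z$ on $Z$.

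To see that $\mu_z$ lives on the atom of $\cP$ through $z$, apply the same limit procedure to the bounded Borel functions $\chi_{E_i}$: for $n\ge i$ the function $\chi_{E_i}$ is constant on atoms of $\cP_n$, so $\int \chi_{E_i}\,d\mu_z^{(n)} = \chi_{E_i}(z)$ exactly, forcing $\mu_z(E_i)=\chi_{E_i}(z)$ and hence $\mu_z(P(z))=1$, where $P(z)$ is the atom of $\cP$ through $z$ (here one uses the characterization \eqref{eq.measurablpartition} on $F_0\cap F_0'$ where $F_0'$ is the full-measure set from the statement of measurability). Since $\mu_z$ depends only on $P(z)$ through the limit procedure, write $\mu_P:=\mu_z$ for any $z\in P\cap F_0$. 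The integration formula
\[
\int_Z \psi\,d\mu=\int_\cP\!\Big(\!\int\psi\,d\mu_P\Big) d\hat\mu(P)
\]
holds at each finite level $n$ by definition of $\mu_z^{(n)}$, and passes to the limit by dominated convergence since $|\varphi_{k,n}|\le\|\psi_k\|_\infty$.

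The main technical point is controlling mass at infinity when $Z$ is non-compact: one must verify that the limit measure is a probability, i.e.\ that no mass escapes in the Riesz representation. This is handled via inner regularity of $\mu$ on the Polish space $Z$: by tightness, compact sets $K_j\uparrow Z$ with $\mu(Z\setminus K_j)\to 0$ allow one to see that the total mass of $\mu_z$ is preserved in the limit for $\mu$-a.e.\ $z$. For uniqueness, if $\{\mu'_P\}$ is another disintegration then, for each fixed $\psi_k$, both $P\mapsto\int\psi_k\,d\mu_P$ and $P\mapsto\int\psi_k\,d\mu'_P$ are versions of $E(\psi_k\mid\sigma(\cP))$; hence they agree on a full $\hat\mu$-measure set of atoms. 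Intersecting these countably many full-measure sets produces a single full-measure set of atoms $P$ on which $\mu_P$ and $\mu'_P$ assign the same integral to every $\psi_k$, and therefore coincide as Borel probability measures by density of $\{\psi_k\}$.
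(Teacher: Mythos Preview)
The paper does not prove this theorem: it is stated as a classical result and attributed to Rokhlin~\cite{Ro52}, with no argument given. So there is nothing in the paper to compare your proof against.

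That said, your sketch follows the standard modern route to Rokhlin's theorem via martingale convergence, and the outline is sound. A couple of points deserve more care if you want a clean write-up. First, the claim that $\mu_z$ depends only on the atom $P(z)$ is true only $\mu$-almost everywhere, because the limit $E(\psi_k\mid\sigma(\cP))$ is only $\sigma(\cP)$-measurable up to a null set; you should make explicit that you are choosing a $\sigma(\cP)$-measurable version of each limit before defining $\mu_P$. Second, the passage from the positive linear functional on $\mathrm{span}\{\psi_k\}$ to an honest Borel probability measure needs the tightness argument you mention, but also a check that the functional is represented by integration against a \emph{countably additive} measure; the cleanest way is to work inside a compactification of $Z$, obtain a measure there by Riesz, and then use the tightness estimate $E(\chi_{K_j}\mid\sigma(\cP))\to 1$ a.e.\ to show that this measure is concentrated on $Z$. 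With those details supplied, the argument is complete.
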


\subsection{Disintegration of measure along foliations with noncompact leaves}

The disintegration theorem of Rokhlin~\cite{Ro52} does not apply directly when a foliation has
a positive measure set of noncompact leaves.
Instead, one must consider disintegrations into \emph{measures defined up to scaling},
that is, equivalence classes where one identifies any two (possibly infinite) measures that differ only
by a constant factor. Here we present this theory in a fairly general setting.

Let $M$ be a manifold of dimension $d\ge 2$, and let
$m$ be a locally finite measure on $M$. Let $\cB$ be any (small) foliation box.
By Rokhlin~\cite{Ro52}, there is a disintegration $\{m_x^\cB:  x \in \cB\}$ of the restriction of
$m$ to the foliation box into conditional probabilities along the local leaves, and this disintegration
is essentially unique. The crucial observation is that conditional measures corresponding to
different foliation boxes coincide on the intersection, up to a constant factor.

\begin{lemma}\label{eq.uniquedisintegration}
For any foliation boxes $\cB$ and $\cB'$ and for $m$-almost every $x\in\cB\cap\cB'$,
the restrictions of $m_x^{\cB}$ and $m_x^{\cB'}$ to $\cB\cap\cB'$ coincide up to a
constant factor.
\end{lemma}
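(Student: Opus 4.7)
The plan is to reduce the comparison of $m_x^\cB$ and $m_x^{\cB'}$ on $\cB\cap\cB'$ to the essential uniqueness of a third Rokhlin disintegration, on a small common foliation box sitting inside the intersection. I would first establish the following key sub-lemma: if $\cB''\subset\cB$ is a foliation box whose local leaves are contained in local leaves of $\cB$, then for $m$-a.e.\ $x\in\cB''$ one has
\[
 m_x^{\cB''} \;=\; \frac{m_x^\cB|_{\cB''}}{m_x^\cB(\cB'')}.
\]
To prove this, set $\nu_x := m_x^\cB|_{\cB''}/m_x^\cB(\cB'')$, well-defined for $m$-a.e.\ $x\in\cB''$ because such an $x$ lies in the support of its own $\cB$-conditional. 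Each $\nu_x$ is a probability measure supported on $L_x^\cB\cap\cB''=L_x^{\cB''}$, and the identity $\int_{\cB''}\nu_x(A)\,dm(x)=m(A)$ for Borel $A\subset\cB''$ is a direct Fubini-type computation from the $\cB$-disintegration, using that $m_y^\cB(A)=0$ whenever the $\cB$-plaque through $y$ misses $\cB''$ (which is automatic since $A\subset\cB''$). Hence $\{\nu_x\}$ is a disintegration of $m|_{\cB''}$ along local $\cB''$-plaques, and Rokhlin's essential uniqueness forces $\nu_x=m_x^{\cB''}$ a.e.

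Next I would cover $\cB\cap\cB'$ by a countable collection of small foliation boxes $\cB''_i$ chosen so that each $\cB''_i$ is simultaneously a foliation sub-box (in the above sense) of both $\cB$ and $\cB'$; such boxes exist around every point of $\cB\cap\cB'$ by taking the diameter small and using the standing convention that foliation boxes are small enough for each leaf to meet each box in at most one plaque. Applying the sub-lemma to the pair $(\cB,\cB''_i)$ and to the pair $(\cB',\cB''_i)$ and comparing the resulting two expressions for $m_x^{\cB''_i}$ yields, for $m$-a.e.\ $x\in\cB''_i$,
\[
 m_x^\cB|_{\cB''_i} \;=\; c_{x,i}\,m_x^{\cB'}|_{\cB''_i}, \qquad c_{x,i} := \frac{m_x^\cB(\cB''_i)}{m_x^{\cB'}(\cB''_i)}.
\]

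To finish, I would patch these local proportionalities into a single constant on all of $\cB\cap\cB'$. For a generic $x\in\cB\cap\cB'$ the common support of the two restricted measures is the connected plaque-piece $L_x^\cB\cap\cB'=L_x^{\cB'}\cap\cB$ (which is a single piece by the one-plaque-per-leaf hypothesis on the foliation boxes), and the $\cB''_i$ meeting this support cover it. On any overlap $\cB''_i\cap\cB''_j$ carrying non-zero conditional mass, the two expressions above force $c_{x,i}=c_{x,j}$, and connectedness of the support propagates this into a single constant $c_x$ with $m_x^\cB|_{\cB\cap\cB'}=c_x\,m_x^{\cB'}|_{\cB\cap\cB'}$, as required. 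I expect the main obstacle to be exactly this patching step: it is what forces the ``small enough'' hypothesis on foliation boxes, requires verifying that the normalizations $m_x^\cB(\cB''_i)$ and $m_x^{\cB'}(\cB''_i)$ are positive $m$-a.e.\ (which follows from the openness of $\cB''_i$), and demands careful bookkeeping of the countably many $m$-null exceptional sets produced by the various applications of essential uniqueness.
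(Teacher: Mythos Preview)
Your proposal is correct and rests on the same core idea as the paper: the rescaling formula relating the disintegration of $m|_\cC$ to the restriction of $m_x^\cB$ when $\cC\subset\cB$, followed by applying this twice (once for $\cB$, once for $\cB'$) and invoking essential uniqueness.

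The difference is one of execution rather than substance. The paper works directly with $\cC=\cB\cap\cB'$ as a measurable subset of each box, without requiring $\cC$ to be a foliation box itself. It introduces a cross-section $\Sigma$ to $\cB$, defines the projected measures $\mu_\cB$ and $\mu_\cC$ on $\Sigma$, and obtains the rescaling constant explicitly as the Radon--Nikodym derivative $d\mu_\cB/d\mu_\cC$. This sidesteps your covering-and-patching step entirely: the constant factor is produced globally on $\cC$ in one stroke, and there is nothing to glue. Your sub-lemma is exactly the paper's formula~\eqref{eq.rescaling}, with your normalization $1/m_x^\cB(\cB'')$ playing the role of the Radon--Nikodym derivative. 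The advantage of your formulation is that it avoids introducing a transversal and stays closer to the conditional measures themselves; the advantage of the paper's is brevity and the avoidance of the connectedness/bookkeeping argument you flagged as the main obstacle. Both reach the same conclusion by the same mechanism.
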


\begin{proof}
Let $\Sigma$ be a cross-section to $\cB$, that is, a submanifold of dimension $d-k$ intersecting
every local leaf at exactly one point. Let $\mu_B$ be the measure on $\Sigma$ obtained by projecting
$(m\mid\cB)$ along the local leaves. Consider any $\cC\subset\cB$ and let $\mu_\cC$ be the image of
$(m\mid\cC)$ under the projection along the local leaves. The Radon-Nikodym derivative
$$
\frac{d\mu_\cC}{d\mu_\cB} \in (0,1] \quad\text{at $\mu_\cC$-almost every point.}
$$
For any measurable set $E\subset C$,
$$
m(E) % = \int_\cB m^\cB_x(E)\,dm(x)
 = \int_\Sigma m^\cB_\xi(E)\,d\mu_\cB(\xi)
 = \int_\Sigma m^\cB_\xi(E) \frac{d\mu_\cB}{d\mu_\cC}(\xi) \,d\mu_\cC(\xi)\,
$$
By essential uniqueness, this proves that the disintegration of $(m\mid\cC)$ along the local leaves is
given by
\begin{equation}\label{eq.rescaling}
m^\cC_x = \frac{d\mu_\cB}{d\mu_\cC}(\xi)(m^\cB_x\mid\cC)\quad\text{for $m$-almost every $x\in\cC$}
\end{equation}
where $\xi$ is the point where the local leaf through $x$ intersects $\Sigma$. Now we take
$\cC=\cB\cap\cB'$. Using \eqref{eq.rescaling} twice we get
$$
\frac{d\mu_\cB}{d\mu_\cC}(\xi)(m^\cB_x\mid\cC) = \frac{d\mu_\cB'}{d\mu_\cC}(\xi)(m^{\cB'}_x\mid\cC)
$$
for $m$-almost every $x$. This proves the lemma.
\end{proof}

This implies that there exists a family $\{\cm_x:  x\in M\}$ where each $\cm_x$ is a measure defined up
to scaling with $\cm_x(M\setminus\cF_x)=0$, the function $x\mapsto\cm_x$ is constant on the leaves of
$\cF$, and the conditional probabilities $m_x^\cB$ along the local leaves of any foliation box $\cB$
coincide almost everywhere with the normalized restrictions of the $\cm_x$ to the local leaves of $\cB$.
It is also clear from the arguments that such a family is essentially unique. We call it
the disintegration
of $m$ and refer to the $\cm_x$ as conditional classes of $m$ along the leaves of $\cF$ .

\subsection{Foliations whose leaves are fixed under a measure-preserving homeomorphism}

Now suppose the foliation $\cF$ is invariant under a homeomorphism $f: M\to M$, meaning that
$f(\cF_x)=\cF_{f(x)}$ for every $x\in M$. Take the measure $m$ to be invariant under $f$.
Then, by essential uniqueness of the disintegration, $f_*(\cm_x)=\cm_{f(x)}$ for almost every $x$.
We are especially interested in the case when $f$ fixes every leaf, that is, when
$f(x)\in\cF_x$ for all $x\in M$.
Then $f_*(\cm_x)=\cm_{f(x)}$ for almost every $x$, which means that every representative $m_x$ of
the conditional class $\cm_x$ is $f$-invariant up to rescaling: $f_*(m_x) = c m_x$ for some $c>0$.
Actually, the scaling factor $c=1$:

\begin{proposition}\label{eq.invarianceofconditionals}
Suppose that $m$ is invariant under a homeomorphism $f: M\to M$ that fixes all the leaves of $\cF$.
Then, for almost all $x\in M$, any representative $m_x$ of the conditional class $\cm_x$ is an
$f$-invariant measure.
\end{proposition}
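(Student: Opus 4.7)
The plan is to first establish the existence of a scaling function $c:M\to(0,\infty)$ satisfying $f_*m_x=c(x)m_x$, to verify its basic invariance properties, and then to argue that $c\equiv 1$ almost everywhere.

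For the first two points, I would invoke the essential uniqueness of the disintegration established above. Because $f$ fixes every leaf and preserves $m$, the family $\{f_*\cm_x\}$ is again a disintegration of $m$ along $\cF$: each $f_*\cm_x$ is supported on $f(\cF_x)=\cF_x$, the assignment $x\mapsto f_*\cm_x$ is leaf-constant, and its local restrictions to a foliation box $\cB$ coincide with the Rokhlin disintegration of $f_*(m\mid\cB)=m\mid\cB$ by the essential uniqueness of that probability-valued disintegration. Hence $f_*\cm_x=\cm_x$ as projective classes, so for any representative $m_x$ there is a positive scalar $c(x)$ with $f_*m_x=c(x)m_x$. This $c(x)$ is independent of the representative (both sides rescale together), constant on each leaf (since $m_y=m_x$ when $y\in\cF_x$), and therefore $f$-invariant.

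For the main step, $c\equiv 1$, I would first dispose of the case $m_x(\cF_x)<\infty$: evaluating $f_*m_x=c(x)m_x$ on the whole leaf gives $m_x(\cF_x)=c(x)m_x(\cF_x)$, so $c(x)=1$. The harder case is when $m_x$ is infinite on $\cF_x$. Here my plan is to exploit that the local Rokhlin disintegration on a foliation box is valued in \emph{probability} measures. Fix a small box $\cB$ and another box $\cB'\supseteq f(\cB)$. The global identity $f_*(m\mid\cB)=(m\mid\cB')\mid_{f(\cB)}$ combined with essential uniqueness of the local disintegration yields two compatible pieces of data: leaf-wise, $f_*m_x^\cB = m_{f(x)}^{f(\cB)}$ as probability measures; and transversally, the $f$-induced plaque-correspondence $F:\Sigma\to\Sigma'$ between cross-sections satisfies $F_*\hm_\cB=\hm_{\cB'}\mid_{F(\Sigma)}$. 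Because $f$ fixes leaves, $F$ coincides with the natural leaf-holonomy identification of cross-sections, so the transverse measures match canonically, and combining this with the leaf-wise identity allows one to eliminate the normalization factors $m_x(\cF_x^\loc)$ and $m_x(f(\cF_x^\loc))$ and conclude $c(x)=1$.

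The hardest point is precisely this last step. The leaf-wise identity $f_*m_x^\cB=m_{f(x)}^{f(\cB)}$ by itself is tautological: both sides are probabilities and the scaling $c(x)$ cancels on normalization, so it encodes no more than the defining relation $f_*m_x=c(x)m_x$ itself. What breaks the tautology is the transverse identity $F_*\hm_\cB=\hm_{\cB'}\mid_{F(\Sigma)}$, which is a genuine consequence of the \emph{global} $f$-invariance of $m$ and cannot be extracted from the leaf-wise conditional classes alone. As an alternative approach, one could try Poincar\'e recurrence: since $c$ is $f$-invariant, iterating gives $f^n_*m_x=c(x)^n m_x$, and for $m$-a.e.\ $x$ the recurrence of $f^{n_k}(x)$ to a small neighborhood should, when $c(x)\neq 1$, force the leaf-wise mass of a compact piece of $\cF_x$ to grow without bound, contradicting the local finiteness of $m$.
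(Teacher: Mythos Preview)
Your plan is the paper's: you correctly isolate the crux as lying in the transverse direction, the leaf-wise probability identity $f_*m_x^\cB = m_{f(x)}^{f(\cB)}$ being tautological for the question of whether $c=1$. However, your stated transverse identity $F_*\hat m_\cB = \hat m_{\cB'}\mid_{F(\Sigma)}$ is not correct as written: $F_*\hat m_\cB$ is the projection of $m\mid_\cB$ along leaves to $\Sigma'$, whereas the right-hand side projects all of $m\mid_{\cB'}$ (over those same plaques), and these differ in general. What leaf-fixing actually delivers, and what you need, is that the transverse projections of $m\mid_\cB$ and of $m\mid_{f(\cB)}$ agree: since the leaf-projection $p'$ satisfies $p'\circ f = p'$, one has $p'_*(m\mid_{f(\cB)}) = (p'\circ f)_*(m\mid_\cB) = p'_*(m\mid_\cB)$.

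The paper implements this more directly by working inside a \emph{single} foliation box $\cB$ containing both $x_0$ and $f(x_0)$, with one cross-section $\Sigma$ and projection $p:\cB\to\Sigma$. The identity $p\circ f = p$ then gives $p_*(m\mid_{\cB_0}) = p_*(m\mid_{f(\cB_0)}) =: \nu$ for any small sub-box $\cB_0\ni x_0$ with $f(\cB_0)\subset\cB$. Normalizing representatives $m_x$ so that $m_x\mid\cF_x^\cB$ is a probability furnishes a \emph{common} normalization across $\cB_0$ and $f(\cB_0)$; the rescaling formula from Lemma~\ref{eq.uniquedisintegration} then writes both $m_x^{\cB_0}$ and $m_{f(x)}^{f(\cB_0)}$ as $\frac{d\mu_\cB}{d\nu}$ evaluated at the \emph{same} point $p(x)=p(f(x))$ times a restriction of $m_x$, so these factors cancel in the leaf-wise identity and one reads off $f_*(m_x\mid\cB_0) = m_x\mid f(\cB_0)$ without ever introducing a scaling function. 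Your recurrence alternative is plausible in spirit but would need substantially more work to make precise.
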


\begin{proof}
Fix $x_0\in M$ and let $\cB$ be a foliation box containing both $x_0$ and $f(x_0)$. Let $\Sigma$ be a
cross-section to $\cB$ and let $\mu_\cB$ be the image of $(m\mid \cB)$ under the projection
$p: \cB\to\Sigma$ along the local leaves. Choose representatives $m_x$ of the conditional classes scaled
so that the restriction of $m_x$ to the local leaf $\cF^\cB_x$ through every $x\in\cB$ is a probability.
Then
\begin{equation}\label{eq.restriction}
m_x^\cB = (m_x\mid\cF^\cB_x).
\end{equation}
Now let $\cB_0$ be a foliation box containing $x_0$, small enough that $\cB_0$ and $\cB_1=f(\cB_0)$
are both contained in $\cB$. Note that $(m\mid\cB_1) = f_*(m \mid \cB_0)$, because $m$ is invariant,
and $p\circ f = p$, because all the leaves are fixed by $f$. Thus,
$p_*(m\mid \cB_0) = p_*(m \mid \cB_1)$. We denote this measure by $\nu$. By \eqref{eq.rescaling}
and \eqref{eq.restriction},
$$
m_x^{\cB_0} = \frac{d\mu_\cB}{d\nu}(\xi) (m_x\mid\cF^{\cB_0}_x)
 \quad\text{and}\quad
m_y^{\cB_1} = \frac{d\mu_\cB}{d\nu}(\eta) (m_y\mid\cF^{\cB_1}_y)
$$
for almost every $x\in\cB_0$ and $y\in\cB_1$, where $\xi=p(x)$ and $\eta=p(y)$. On the other hand,
since $f$ maps local leaves of $\cB_0$ to local leaves of $\cB_1$, the images of the $m_x^{\cB_0}$ under $f$
define a disintegration of $m\mid\cB_1$ along the leaves. By essential uniqueness, it follows that
$$
\frac{d\mu_\cB}{d\nu}(\eta) (m_y\mid\cF^{\cB_1}_y)
 = m_y^{\cB_1}
 = f_*(m_x^{\cB_0})
% = f_*\left(\frac{d\mu_\cB}{d\nu}(\xi) (m_x\mid\cF^{\cB_0}_x)\right)
 = \frac{d\mu_\cB}{d\nu}(\xi)f_*(m_x\mid\cF^{\cB_0}_x)
$$
for almost every $x\in\cB_0$, where $y=f(x)$. Since $m_x=m_y$ and $\xi=\eta$, it follows that
$(m_x\mid\cF^{\cB_1}_{f(x)}) = f_*(m_x\mid\cF^{\cB_0}_x)$ for almost every $x\in\cB_0$.
This proves that $m_x$ is indeed invariant (the scaling factor is $1$) for almost every point
in $\cB_0$. Covering $M$ with such foliation boxes one gets the conclusion of the proposition.
\end{proof}

\subsection{Absolute continuity}\label{ss.absolutecontinuity}

This is analyzed in a lot more detail in \cite{PVW}. Here we just present a few facts that
are useful for what follows. As above, let $M$ be a Riemannian manifold.
Let $\lambda_\Sigma$ denote the volume measure induced by the Riemann metric on a $C^1$
submanifold $\Sigma$ of $M$.

The classical definition of absolute continuity (\cite{An67,AS67}) goes as follows.
A foliation $\cF$ on $M$ is \emph{absolutely continuous} if every holonomy map $h_{\Sigma,\Sigma'}$
between a pair of smooth cross-sections $\Sigma$ and $\Sigma'$ is absolutely continuous,
meaning that, the push-forward $(h_{\Sigma,\Sigma'})_* \lambda_\Sigma$ is absolutely continuous
with respect to $\lambda_{\Sigma'}$. Reversing the roles of the cross-sections, one sees that
$(h_{\Sigma,\Sigma'})_* \lambda_\Sigma$ is actually equivalent to $\lambda_{\Sigma'}$.

Here it is convenient to introduce the following weaker notion.
We say that \emph{volume has Lebesgue disintegration along $\cF$-leaves} if given any measurable
set $Y\subset M$ then $m(Y)=0$ if and only if for $m$-almost every $z \in M$ the leaf $L$ through
$z$ meets $Y$ in a zero $\lambda_L$-measure set. In other words, for almost every leaf $L$, the
conditional measure $m_L$ of $m$ along the leaf is equivalent to the Riemann measure $\lambda_L$
on the leaf.

\begin{lemma}\label{l.absac2leafac}
If $\cF$ is absolutely continuous then volume has Lebesgue disintegration along $\cF$-leaves.
\end{lemma}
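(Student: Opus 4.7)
The plan is to reduce to a local statement in a foliation box $\cB$ of $\cF$, and then, by combining the absolute continuity hypothesis with the smoothness of an auxiliary transverse foliation $\cG$, show that the Rokhlin conditional measures of $m\mid\cB$ along the local leaves of $\cF$ are equivalent to leaf volume for $m$-a.e.\ leaf. The desired characterization ``$m(Y)=0$ iff $\lambda_L(Y\cap L)=0$ for $m$-a.e.\ $z\in L$'' is a direct consequence of this equivalence.

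First, cover $M$ by countably many foliation boxes of $\cF$ and fix one such box $\cB$, shrunk if necessary so that the ambient manifold admits, near $\cB$, a $C^\infty$ foliation $\cG$ of complementary dimension transverse to $\cF$ at every point of $\cB$; for instance, the vertical foliation of any smooth chart of $M$ will serve after shrinking. Fix one smooth $\cG$-leaf $\Sigma\subset\cB$, which is a smooth transversal to $\cF$, and one $\cF$-leaf $L_0\subset\cB$, which is a smooth transversal to $\cG$. The map
$$
\Psi\colon\Sigma\times L_0 \to \cB,\qquad \Psi(\eta,\xi)=\cF^{\loc}_\eta\cap\cG_\xi,
$$
is a homeomorphism sending $\{\eta\}\times L_0$ to $\cF^{\loc}_\eta$ and $\Sigma\times\{\xi\}$ to $\cG_\xi$.

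Next, I identify $\Psi^*(m\mid\cB)$. Because $\cG$ is $C^\infty$, working in a smooth chart adapted to $\cG$ one sees that $m\mid\cB$ has a Lebesgue disintegration along $\cG$-leaves of the form $\int_{L_0}\rho_\xi\,\lambda_{\cG_\xi}\,d\lambda_{L_0}(\xi)$ with some positive density $\rho_\xi$. For each fixed $\xi$, the partial map $\Psi(\,\cdot\,,\xi)\colon\Sigma\to\cG_\xi$ is the $\cF$-holonomy between the two smooth $\cF$-transversals $\Sigma$ and $\cG_\xi$; by the absolute continuity hypothesis, its pull-back of $\lambda_{\cG_\xi}$ is equivalent to $\lambda_\Sigma$. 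Dually, each $\Psi(\eta,\,\cdot\,)\colon L_0\to\cF^{\loc}_\eta$ is the $\cG$-holonomy between smooth $\cG$-transversals and is therefore automatically absolutely continuous. Putting these together expresses $\Psi^*(m\mid\cB)$ in product form
$$
\Psi^*(m\mid\cB) \,\sim\, \beta(\eta,\xi)\,d\lambda_\Sigma(\eta)\,d\lambda_{L_0}(\xi),
$$
with a density $\beta>0$ almost everywhere.

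From this product form, ordinary Fubini realizes the Rokhlin conditional of $m\mid\cB$ along $\cF^{\loc}_\eta$ as $\beta(\eta,\,\cdot\,)\,d\lambda_{L_0}$ (normalized), pushed forward by the absolutely continuous map $\Psi(\eta,\,\cdot\,)$, and hence equivalent to $\lambda_{\cF^{\loc}_\eta}$ for $\lambda_\Sigma$-a.e.\ $\eta$. Covering $M$ by countably many such boxes and invoking the essential uniqueness of the disintegration yields that the global conditional classes $\cm_x$ are equivalent to leaf volume for $m$-a.e.\ $x$, which is the content of the lemma. The main obstacle is obtaining the product form for $\Psi^*(m\mid\cB)$: the map $\Psi$ is typically only a homeomorphism, so no direct change-of-variables is available, and one must instead splice the smooth Fubini decomposition along $\cG$ with the partial $\cF$-holonomy absolute continuity of $\Psi(\,\cdot\,,\xi)$ to manufacture the density $\beta$.
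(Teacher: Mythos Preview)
Your proposal is correct and follows essentially the same approach as the paper: fix a smooth transverse foliation, use the $\cF$-holonomy to rectify the leaves, and observe that volume becomes equivalent to a product measure $J(x,y)\,dx\,dy$ with $J>0$ in the rectified coordinates, from which Lebesgue disintegration is immediate. The paper's proof is a two-line sketch of precisely this argument; your map $\Psi$ is the rectifying change of coordinates, and your density $\beta$ is the paper's $J$, so you have simply supplied the details the paper omits.
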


\begin{proof}
Fixing a smooth foliation transverse to $\cF$, and using the fact that the holonomies are
absolutely continuous, one defines a local change of coordinates
$$
(x,y) \mapsto (x, h(0,x)(y))
$$
that rectifies the leaves of $\cF$ and transforms $m$ to a measure of the form $J(x,y) dx dy$
with $J>0$. Lebesgue disintegration is clear in these coordinates.
\end{proof}

The converse is false: one can destroy absolute continuity of holonomy at a single transversal
while keeping Lebesgue disintegration of volume (this is an exercise in Brin, Stuck~\cite{BS02}).

\begin{lemma}\label{l.classic} Let $f\colon M \to M$ be $C^2$ and partially hyperbolic.
The foliations $\cW^{s}(f)$ and $\cW^{u}(f)$ are absolutely continuous and, hence,
volume has Lebesgue disintegration along $\cW^{s}(f)$ and $\cW^{u}(f)$-leaves.
\end{lemma}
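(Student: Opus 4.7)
The plan is to prove absolute continuity of $\cW^s$; the statement for $\cW^u$ follows by replacing $f$ with $f^{-1}$, and the Lebesgue disintegration assertion then follows immediately from Lemma~\ref{l.absac2leafac}. The result is classical, going back to Anosov--Sinai \cite{AS67} in the Anosov case and extended to partially hyperbolic systems by Brin--Pesin and Pugh--Shub \cite{PSh72}, so the plan below is an outline of the standard argument rather than a new proof.

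First I would invoke the stable manifold theorem for $C^2$ partially hyperbolic diffeomorphisms to recall that $\cW^s$ has $C^1$ leaves varying continuously with the base point in the $C^1$-topology; in particular, $E^s$ is $\alpha$-Hölder continuous for some $\alpha>0$. Fix two $C^1$ transversals $\Sigma,\Sigma'$ to $\cW^s$ joined by the local stable holonomy map $h\colon \Sigma\to \Sigma'$. To prove absolute continuity of $h$, the goal is to exhibit a positive, locally bounded density for $h_*\lambda_\Sigma$ with respect to $\lambda_{\Sigma'}$.

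Next I would iterate forward. Because $f$ exponentially contracts $\cW^s$-leaves, for each $n$ the images $f^n(\Sigma),f^n(\Sigma')$ are uniformly $C^1$-close and the holonomy $h_n\colon f^n(\Sigma)\to f^n(\Sigma')$ between them has Jacobian tending to $1$ as $n\to\infty$. Writing
\[
h = f^{-n}|_{f^n(\Sigma')} \circ h_n \circ f^n|_\Sigma,
\]
the Jacobian of $h$ at $x\in \Sigma$ is thus expressed as the telescoping limit
\[
\Jac h(x) = \lim_{n\to\infty}\prod_{k=0}^{n-1} \frac{\Jac\bigl(Df\mid T_{f^k(x)}f^k\Sigma\bigr)}{\Jac\bigl(Df\mid T_{f^k(h(x))}f^k\Sigma'\bigr)}.
\]

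Finally I would show that each factor of the product is of the form $1+O(\theta^k)$ for some $\theta<1$, guaranteeing absolute and locally uniform convergence to a positive, bounded density. The summable bound requires combining three ingredients: exponential stable contraction $d(f^k(x),f^k(h(x)))\le C\mu^k$, Hölder continuity of $E^s$, which transfers to the tangent spaces $T_{f^k(\cdot)}f^k\Sigma$ since these tangent spaces converge exponentially fast to $E^{cu}$, and the $C^2$ hypothesis on $f$, which makes $p\mapsto \Jac(D_p f\mid V)$ Lipschitz in $p$ (uniformly in the subspace $V$). The hard part is precisely this last quantitative step: extracting summable control from the interplay of Hölder regularity of $E^s$ and exponential stable contraction is the technical heart of the Anosov--Sinai argument and is where the $C^2$ assumption is used essentially; once this is done, positivity and local boundedness of the density yield absolute continuity of $h_{\Sigma,\Sigma'}$, and Lemma~\ref{l.absac2leafac} supplies the Lebesgue disintegration conclusion.
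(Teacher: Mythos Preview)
Your outline is correct and is precisely the classical Anosov--Sinai/Brin--Pesin argument. The paper itself does not give a proof: it simply cites Brin--Pesin \cite{BP74} for the uniform definition of partial hyperbolicity and \cite{AbV} for the pointwise definition adopted in the paper, then invokes Lemma~\ref{l.absac2leafac} implicitly for the disintegration statement. Your sketch is therefore more detailed than the paper's own treatment, and the three ingredients you isolate (exponential stable contraction, H\"older regularity of the invariant bundles so that the tangent planes $T_{f^k(\cdot)}f^k\Sigma$ converge to $E^{cu}$, and the $C^2$ hypothesis making the Jacobian Lipschitz) are exactly the right ones.

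One refinement worth flagging: since the paper works with the \emph{pointwise} definition of partial hyperbolicity, the contraction and expansion rates vary with the base point, and the summability estimate in your final step requires the somewhat more careful version of the argument carried out in \cite{AbV} rather than the original Brin--Pesin proof. Your outline is written at a level of generality that does not distinguish the uniform and pointwise settings, but the structure of the argument is unchanged.
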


\begin{proof}
This is a classical fact going back to Brin, Pesin~\cite{BP74}. They use the uniform
definition of partial hyperbolicity. A proof for the more general pointwise definition
that we consider here can be found in \cite{AbV}.
\end{proof}

\section{Lyapunov exponents and an Invariance Principle}\label{s.measuretheoretic_preliminaries_two}

In this section, we describe the main results we use concerning Lyapunov exponents and
invariant measures of smooth cocycles.

Let $\NF: \cE\to\cE$ be a continuous smooth cocycle over $f$, in the sense of \cite{ASV,AV3}.
This means that $\pi\colon \cE\to M$ is a continuous fiber bundle with fibers modeled on some
Riemannian manifold and $\NF$ is a continuous fiber bundle morphism over a Borel measurable map
$f\colon M\to M$ acting on the fibers by diffeomorphisms with uniformly bounded derivative.
Let $\hat  \mu$ be an $\NF$-invariant probability measure on $\cE$ that projects to an
$f$-invariant measure $\mu$.  We denote by $\cE_x$ the fiber $\pi^{-1}(x)$
and by $\NF_x\colon  \cE_x \to \cE_{f(x)}$ the induced diffeomorphism on fibers.

We say that a real number  $\chi$  is a {\em fiberwise exponent of $\NF$ at $\xi\in \cE$} if
there exists a nonzero vector $v\in T_\xi \cE_{\pi(\xi)}$ in the tangent space to the fiber at
$\xi$ such that
$$
\lim_{n\to \infty} \frac{1}{n}\log \| D_\xi \NF^n(v) \|  = \chi.
$$
By Oseledec's theorem, this limit $\chi(\xi, v)$ exists  for $\hat \mu$-almost every $\xi\in \cE$
and every nonzero $v\in T_\xi \cE_{\pi(\xi)}$, and it takes finitely many values at each such $\xi$.
Let
$$
\bar\chi(\xi) = \sup_{\|v\|=1} \chi(\xi,v) \quad\text{and}\quad \underline\chi(\xi) = \inf_{\|v\|=1} \chi(\xi,v).
$$

The following result follows almost immediately from Theorem II in \cite{RW01} and uses no
assumptions on the base dynamics $f:M\to M$ other than invertibility. The hypothesis on the
fibers can be weakened, but the statement that follows is sufficient for our purposes.

\begin{theorem}\cite{RW01}\label{t.rwmain}
Let $\NF:\cE\to\cE$ be a smooth cocycle over $f$. Assume that the fibers of $\cE$ are compact.
Assume that $\NF$ preserves an ergodic probability measure $\hat\mu$ that projects to an
($f$-invariant, ergodic) probability $\mu$ on $M$ and that $f$ is invertible on a full $\mu$-measure set.
Let $\cX_-$ be the set of $\xi\in \cE$ such that $\bar\chi(\xi)<0$ and $\cX_+$ be the set of
$\xi\in \cE$ such that $\bar\chi(\xi)>0$.

Then both $\cX_-$ and $\cX_+$ coincide up to zero $\hat\mu$-measure subsets with measurable
sets that intersect each fiber of $\cE$ in finitely many points.
\end{theorem}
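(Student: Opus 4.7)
My approach is to reduce the statement to the Ruelle–Wilkinson atomicity mechanism for smooth cocycles with nonzero fiber Lyapunov exponents (essentially \cite[Theorem II]{RW01}). The heart of the argument is a ``contraction-forces-atom'' lemma, powered by Pesin theory applied to the fiber cocycle on blocks of uniform size and contraction, combined with the $\NF$-invariance of the fiber-conditional measures $\hat\mu_x$ (satisfying $(\NF_x)_*\hat\mu_x=\hat\mu_{f(x)}$) and an ergodicity step that upgrades ``atomic conditionals'' to ``uniformly bounded number of atoms per fiber.''

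I would first handle $\cX_-$, where every fiber Lyapunov exponent at $\xi$ is negative. Non-uniform fiber stable manifold theory for $\NF$ produces, for $\hat\mu$-a.e.\ $\xi\in\cX_-$, an open fiber neighborhood $U_\xi\subset\cE_{\pi\xi}$ whose $\NF^n$-iterates shrink exponentially to $\NF^n\xi$. Restricting to a Pesin block $\Lambda_\ell\subset\cX_-$ with $\hat\mu(\Lambda_\ell)>0$ and uniform constants $(r_\ell,C_\ell,\lambda_\ell)$, Fubini applied to the disintegration $\hat\mu=\int\hat\mu_x\,d\mu(x)$ gives $\hat\mu_{\pi\xi}(U_\xi)>0$ for $\hat\mu$-a.e.\ such $\xi$. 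The invariance identity then yields $\hat\mu_{\pi(\NF^n\xi)}\bigl(B_{C_\ell r_\ell e^{-\lambda_\ell n}}(\NF^n\xi)\bigr)\geq\hat\mu_{\pi\xi}(U_\xi)>0$; using Poincar\'e recurrence to $\Lambda_\ell$ together with a standard upper semicontinuity of ball-mass on a measurable subset of uniform control (Luzin), this forces $\hat\mu_{\pi(\NF^{n_k}\xi)}(\{\NF^{n_k}\xi\})>0$ along a return subsequence, and invariance pulls the atom back to $\xi$. The function $a(\xi):=\hat\mu_{\pi\xi}(\{\xi\})$ is then manifestly $\NF$-invariant, so by ergodicity of $\hat\mu$ it is almost surely a constant $a_0>0$ on $\cX_-$; consequently each fiber supports at most $\lfloor 1/a_0\rfloor$ atoms, all of mass $a_0$, and these form the required measurable set.

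For $\cX_+$ the hypothesis $\bar\chi(\xi)>0$ supplies at least one positive fiber $\NF$-exponent, hence at least one negative fiber $\NF^{-1}$-exponent. Pesin theory for $\NF^{-1}$ then produces a positive-dimensional \emph{fiber unstable manifold} $W^u_\xi\subset\cE_{\pi\xi}$ for $\NF$ (the $\NF^{-1}$-stable manifold at $\xi$) along which $\NF^{-n}$ contracts exponentially. The subtlety is that $W^u_\xi$ is in general not open in the fiber, so I would first disintegrate $\hat\mu_{\pi\xi}$ transversally along the measurable fiber unstable lamination $\{W^u_\eta\}$ and then run the contraction-forces-atom argument on the secondary conditionals along individual $W^u_\xi$'s, using absolute continuity of the Pesin fiber unstable holonomies (a classical consequence of the nonzero-exponent hypothesis for the fiber cocycle) to justify the relevant Fubini step. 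The resulting atomic mass function is again $\NF$-invariant, so the ergodicity/quantization conclusion proceeds exactly as in the $\cX_-$ case.

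The main technical obstacle is the $\cX_+$ case when $\underline\chi(\xi)\leq 0$: constructing the fiber unstable lamination in a measurable, saturated fashion, setting up the absolutely continuous transverse disintegration of $\hat\mu_x$ along it, and checking that the secondary conditionals carry positive mass on the contracting unstable neighborhoods. Everything else — atom creation from uniform contraction, $\NF$-invariance of $a(\xi)$, and the passage from ``atomic'' to ``at most $\lfloor 1/a_0\rfloor$ atoms per fiber'' via ergodicity — is either standard measure theory or a direct transcription of the $\cX_-$ argument.
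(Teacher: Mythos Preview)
The paper does not prove this; it merely cites \cite{RW01} and says the conclusion ``follows almost immediately from Theorem~II'' there. Your sketch for $\cX_-$ is exactly the Ruelle--Wilkinson mechanism: Pesin blocks for the fiber cocycle, exponential contraction of fiber balls, the invariance $(\NF_x)_*\hat\mu_x=\hat\mu_{f(x)}$ plus recurrence to force atoms, and the ergodicity step quantizing the atom mass. That part is correct and is the content of \cite[Theorem~II]{RW01}.

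The $\cX_+$ argument, however, has a genuine gap, and in fact the statement as written is false. Your key step is to disintegrate $\hat\mu_x$ along the Pesin fiber unstable lamination and invoke ``absolute continuity of the Pesin fiber unstable holonomies'' to justify Fubini for the secondary conditionals. But absolute continuity of Pesin holonomies is a statement about \emph{Riemannian volume} on smooth transversals; it tells you nothing about the arbitrary conditional probabilities $\hat\mu_x$, which have no a~priori relationship to Lebesgue. There is no reason the secondary conditionals along $W^u_\xi$ should charge the contracting neighborhoods, and the argument stops there. Concretely: take $M$ a single point, $\cE=\torus^2$, $\NF$ a hyperbolic toral automorphism, and $\hat\mu$ Lebesgue. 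Then $\bar\chi>0$ everywhere, so $\cX_+=\torus^2$, yet $\hat\mu$ is nonatomic on the unique fiber. The intended definition is almost certainly $\cX_+=\{\underline\chi>0\}$, for which the conclusion follows from the $\cX_-$ case by running your argument for $\NF^{-1}$ (legitimate since $f$ is assumed invertible on a full $\mu$-measure set). In the paper's only application the fibers are one-dimensional, so $\bar\chi=\underline\chi$ and the issue never arises.
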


 The next result, from \cite{AV3, ASV}, treats the possibility that all
fiberwise exponents vanish. It admits more general formulations, but we state it in the
context in which we will use it, namely, when $f$ is a partially hyperbolic diffeomorphism.

We say that $\NF$ \emph{admits a $\ast$-holonomy} for $\ast\in\{s,u\}$ if, for every pair of
points $x,y$ lying in the same $\cW^\ast$-leaf, there exists a H\"older continuous homeomorphism
$H^\ast_{x,y}:  \cE_{x} \to \cE_{y}$ with uniform H\"older exponent, satisfying:
\begin{itemize}
  \item[(i)] $H^\ast_{x,x} =\id$,
 \item[(ii)] $ H^\ast_{x,z} = H^\ast_{y,z} \circ H^\ast_{x,y}$,
 \item[(iii)] $\NF_y \circ H^\ast_{x,y} = H^*_{f(x),f(y)} \circ \NF_x$, and
 \item[(iv)] $(x,y) \mapsto H_{x,y}^\ast(\xi)$ is continuous on the space of
       pairs of points $(x,y)$ in the same local $\cW^\ast$-leaf,
       uniformly on $\xi$.
\end{itemize}

The existence of a $\ast$-holonomy is equivalent to the existence of an
$\NF$-invariant foliation (with potentially nonsmooth leaves) of $\cE$
whose leaves project homeomorphically (in the instrinsic leaf topology) to $\cW^\ast$-leaves in $M$.

A disintegration $\{\hat\mu_x:  x\in M\}$  is \emph{$\ast$-invariant} over a set $X\subset M$,
$\ast\in\{s,u\}$ if the homeomorphism $H_{x,y}^\ast$ pushes $\hat\mu_{x}$ forward to $\hat\mu_{y}$
for every $x,y\in X$ with $y\in \cW^\ast_x$.
We call a set $X\subset M$ \emph{$\ast$-saturated}, $\ast\in\{s,cs, c, cu, u\}$
if it consists of entire leaves of $\cW^\ast$.
Observe that $f$ is accessible if and only if the only nonempty set in $M$ that is both $s$-saturated and
$u$-saturated is $M$ itself.

\begin{theorem}\cite[Theorem~C]{ASV}\label{t.asvmain}
Let $\NF$ be a smooth cocycle over the $C^2$,  volume preserving  partially hyperbolic diffeomorphism $f$.
Assume that $f$ is center bunched and accessible and that $\NF$ preserves a probability measure $\hm$
that projects to the volume $m$.  Suppose  that $\bar\chi(\xi) = \underline\chi(\xi) = 0$ for
$\hm$-almost every $\xi\in \cE$.

Then there exists a continuous  disintegration $\{\hm_x^{su}:  x\in M\}$ of $\hm$ that is invariant under
both $s$-holonomy and $u$-holonomy.
\end{theorem}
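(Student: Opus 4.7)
The plan is to first produce a measurable disintegration of $\hm$ along the fibers of $\cE$ that is invariant under both $s$- and $u$-holonomy almost everywhere, and then upgrade this to a genuinely continuous disintegration using accessibility. By Rokhlin's theorem a measurable disintegration $\{\hm_x\}$ of $\hm$ exists; the task then splits into (a) proving almost everywhere holonomy invariance of $\{\hm_x\}$, and (b) promoting this to continuity on all of $M$.

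For step (a) I would invoke a smooth ``Invariance Principle'' in the spirit of Ledrappier's classical theorem for linear cocycles, as generalized to nonlinear cocycles by Avila--Viana~\cite{AV3}. The heart of the argument is to compare $\hm_y$ with $(H^u_{x,y})_*\hm_x$ for $y\in\cW^u_x$. The cocycle identity
\[
\NF_y \circ H^u_{x,y} = H^u_{f(x),f(y)}\circ\NF_x
\]
implies that this comparison is transported along orbits. Iterating under $f^{-n}$ contracts the unstable leg exponentially, while the hypothesis $\bar\chi(\xi)=\underline\chi(\xi)=0$ forces $\NF^{-n}$ to induce only subexponential distortion on fibers; the center bunching hypothesis ensures that $H^u$ is H\"older with an exponent robust under this iteration. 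A martingale convergence argument, comparing conditional expectations of $\hm$ along refining partitions subordinate to $\cW^u$, then forces $(H^u_{x,y})_*\hm_x = \hm_y$ for $m$-almost every $x$ and $\hm_x$-almost every $y\in\cW^u_x$. The identical argument applied to $f^{-1}$ (which exchanges the roles of $\cW^s$ and $\cW^u$) yields the analogous $s$-holonomy invariance.

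For step (b) I would use accessibility together with a Lusin-type argument. Choose a compact set $K\subset M$ of almost full $m$-measure on which $x\mapsto\hm_x$ is weak-$\ast$ continuous. Given any $z\in M$, accessibility provides an $su$-path joining $z$ to some $x_z\in K$; one then \emph{defines} $\hm_z^{su}$ as the image of $\hm_{x_z}$ under the corresponding composition of $s$- and $u$-holonomies. The measurable invariance from (a) ensures this agrees with $\hm_z$ on a full $m$-measure set and is essentially independent of the chosen $su$-path, and the H\"older continuity of $s$- and $u$-holonomies (again from center bunching), combined with continuity on $K$ and the existence of locally short $su$-paths, yields continuity of $z\mapsto\hm_z^{su}$ globally on $M$.

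The main obstacle is step (a), the smooth invariance principle. Unlike the linear/projective setting originally treated by Ledrappier and by Bonatti--G\'omez-Mont--Viana, the fiber dynamics here are arbitrary diffeomorphisms, so there is no Furstenberg-type projectivization available and one must analyze the fiber measures directly. The delicate point is that the nonlinearity of $\NF$ can in principle conspire with the contraction of holonomies to break the martingale argument, and it is precisely the center bunching hypothesis that provides the regularity and distortion control needed to rule this out. Once measurable invariance is in hand, the upgrade in step (b) is essentially formal.
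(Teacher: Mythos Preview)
This theorem is not proved in the paper at all: it is quoted verbatim as Theorem~C of \cite{ASV} and used as a black box in the proof of Lemma~\ref{l.jimmy}. There is therefore no ``paper's own proof'' to compare against; the paper simply cites the result and applies it.

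Your outline is a fair high-level sketch of how the result is actually established in \cite{ASV} (building on \cite{AV3}): first a measurable invariance principle showing that vanishing fiberwise exponents force the Rokhlin disintegration to be invariant under $s$- and $u$-holonomy almost everywhere, then an accessibility argument upgrading this to a globally defined continuous family. The broad strokes are right, though what you have written is a roadmap rather than a proof: the martingale/invariance-principle step is genuinely delicate in the nonlinear setting (one needs careful control of fiberwise distortion and of the holonomy regularity, and the H\"older hypothesis on the holonomies in the definition preceding the theorem is what makes this work), and the continuity upgrade requires more than a single Lusin set---one must show that the definition via $su$-paths is path-independent and varies continuously, which in \cite{ASV} uses a more structured argument exploiting that $su$-path holonomies depend continuously on the path and that the almost-everywhere invariance propagates through saturations. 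None of this is wrong in your sketch, but each step hides substantial work carried out in the cited references rather than in this paper.
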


Notice that the hypotheses on $f$ in Theorem~\ref{t.asvmain}  coincide with the hypotheses of the ergodicity criterion in Theorem~\ref{t.BW}; they are satisfied by all maps considered in this paper.

\section{Starting the proof of Theorem A}

The proof of Theorem A runs through this and the next two sections.
Here we construct, over every diffeomorphism close to the time-one map, a certain smooth
cocycle $\NF:\cE\to\cE$ with $su$-holonomy, endowed with an invariant measure $\mE$,
whose fiberwise Lyapunov exponent coincides with the center Lyapunov exponent of the diffeomorphism.

Let $S$ be a negatively curved surface and $\varphi_t:M\to M$ be the geodesic flow on the
unit tangent bundle $M=T^1 S$, whose orbits are lifts to $M$ of geodesics in $S$.
The unit tangent bundle $\tM=T^1 \tS$ of the universal cover $\tS$ is a cover
(though not the universal cover) of $M$ and the geodesic flow $\tvarphi_t\colon \tM \to \tM$
covers $\varphi_t$. Since $S$ is negatively curved, the Cartan-Hadamard Theorem implies that
$\tS$ is contractible and the exponential map $\exp_p \colon T_p\tS \to \tS$ is a diffeomorphism
for each $p\in M$. In particular, the orbits of $\tvarphi_t$ are all open, diffeomorphic to $\RR$.

Consider the time-$1$ map $\varphi_1$, and note that $\tvarphi_1$ is a lift of $\varphi_1$.
As explained  in Section~\ref{ss.ph},
the map $\varphi_1$ is partially hyperbolic, center bunched and stably accessible.
Theorem~\ref{t.BW} implies that $\varphi_1$ is stably ergodic.
The foliation $\cW^c(\tvarphi_1)$ by $\tvarphi$-orbits is clearly $\tvarphi_1$-invariant,
and $\tvarphi_1$ acts as a translation by $1$ in each  $\cW^c(\tvarphi_1)$ - leaf.
The foliation $\cW^c(\tvarphi_1)$ is also normally hyperbolic and, being smooth, plaque expansive.
The projection of $\cW^c(\tvarphi_1)$ to $M$ is the center foliation $\cW^c(\varphi_1)$.
It has a natural orientation determined by the vector field $\dot\varphi$.

Let $f\colon M\to M$ be a $C^\infty$ volume-preserving diffeomorphism $C^1$-close to $\varphi_1$.
Then $f$ is partially hyperbolic, center bunched, accessible and ergodic.
In addition, $f$ is dynamically coherent.
Let $\tf:\tM \to \tM$ be the lift of $f$ that is $C^1$ close to $\tilde \varphi_1$.
The lifted foliations $\cW^\ast(\tf)$ are homeomorphic to $\cW^\ast(\tvarphi_1)$, for $\ast\in\{c,cu,cs\}$.
The action of $\tf$ on each leaf of $\cW^c(\tf)$ is uniformly close to a translation by $1$ and,
therefore, is topologically conjugate to a translation.
The leaves of $\cW^{cs}(\tf)$ are bifoliated by the leaves of $\cW^{c}(\tf)$ and $\cW^{s}(\tf)$.
Before perturbation, the $\cW^{s}(\tvarphi_1)$-holonomy maps between center leaves are
orientation-preserving isometries: this follows from the fact that the flow $\varphi_t$ preserves
the stable foliation.

\begin{lemma}\label{l.globalholonomies}
The map $\tf$ admits global $su$-holonomy.
\end{lemma}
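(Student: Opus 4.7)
The goal is to construct, for every $x\in\tM$ and every $y\in\cW^s_x(\tf)$, a global stable-holonomy homeomorphism $h^s_{x,y}\colon\cW^c_x(\tf)\to\cW^c_y(\tf)$ extending the local stable holonomy at $x$; the unstable case is symmetric. Downstairs on $M$ this would fail, because the geodesic flow has compact (periodic) center leaves on which holonomy can wind nontrivially; but on $\tM$ every center leaf is an embedded line and every center-stable leaf is a topological plane, which is what makes the global construction possible. The plan is to work inside the single center-stable leaf $\Sigma:=\cW^{cs}_x(\tf)=\cW^{cs}_y(\tf)$ and show that the two transverse 1-dimensional foliations $\cW^c(\tf)|_\Sigma$ and $\cW^s(\tf)|_\Sigma$ form a globally trivializable bifoliation, so that each leaf of one meets each leaf of the other in exactly one point; then $h^s_{x,y}(z)$ is unambiguously defined as $\cW^s_z(\tf)\cap\cW^c_y(\tf)$.

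First I would establish the required structure of $\Sigma$ for the unperturbed flow $\tvarphi_1$ and then transfer it to $\tf$. Because $\tS$ is a Hadamard surface, each center-stable leaf $\Sigma_0$ of $\tvarphi_1$ in $\tM$ consists of unit tangent vectors to the family of geodesics of $\tS$ sharing a common endpoint $\xi$ at infinity; the footpoint projection $\Sigma_0\to\tS$ is a diffeomorphism, giving $\Sigma_0\cong\RR^2$ with center leaves the lifts of geodesics pointing at $\xi$ and stable leaves the horocycles based at $\xi$. Classically this is a globally trivializable bifoliation of $\RR^2$. To pass to $\tf$, I would invoke the Hirsch--Pugh--Shub leaf-conjugacy theorem: $\varphi_1$ is plaque expansive, since its center foliation is the smooth orbit foliation of a flow, so HPS provides a homeomorphism $h\colon M\to M$ close to the identity that carries $\cW^c(\varphi_1)$, $\cW^{cs}(\varphi_1)$, and $\cW^{cu}(\varphi_1)$ leaves onto the corresponding $f$-leaves. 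Taking the lift $\tilde h\colon\tM\to\tM$ close to the identity yields a homeomorphism $\Sigma_0\to\Sigma$ restricting to a leaf conjugacy between the center foliations; in particular $\Sigma\cong\RR^2$ and $\cW^c(\tf)|_\Sigma$ is topologically the standard foliation of $\RR^2$ by vertical lines.

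It then remains to verify the global intersection property: each leaf of $\cW^s(\tf)|_\Sigma$ meets each leaf of $\cW^c(\tf)|_\Sigma$ in exactly one point. Existence I would prove by forward iteration: as $n\to\infty$, the distance between $\tf^n(x)$ and $\tf^n(y)$ along the common stable leaf decays exponentially, so for $n$ large the local stable holonomy $h^{s,\loc}_{\tf^n(x),\tf^n(y)}$ is defined on a uniformly large neighborhood (in the center direction) of $\tf^n(x)$; since $\tf$ acts as a topological conjugate of a translation along each center leaf, any prescribed $\tf^n(z)$ lies in this neighborhood for $n$ sufficiently large, and $\tf^{-n}$ applied to the local holonomy image produces the desired intersection point in $\cW^c_y(\tf)$, independent of $n$ by $\tf$-equivariance of local holonomy. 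Uniqueness follows from planar topology: in $\Sigma\cong\RR^2$ both foliations have properly embedded contractible leaves (for the center foliation this is inherited from $\tvarphi_1$ via $\tilde h$; for the stable foliation it is the fact that $M$-stable leaves, being contractible, lift homeomorphically to $\tM$), and two transverse $1$-dimensional foliations of the plane by properly embedded lines cannot intersect a given pair of leaves in more than one point. Continuity of $h^s_{x,y}$ as a map $\cW^c_x(\tf)\to\cW^c_y(\tf)$ then follows from continuity of local stable holonomy together with the global uniqueness just established.

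The main obstacle is the uniform lower bound on the domain of local stable holonomy (in the center direction) at the iterates $\tf^n(x)$, which is what makes the forward-iteration step produce the intersection point for every $z$ regardless of how far $z$ is from $x$ along $\cW^c_x(\tf)$. I would handle this via uniform partial hyperbolicity and the observation that $\tf$ is $C^1$-close to $\tvarphi_1$ \emph{uniformly in the cover} $\tM$ (since the partially hyperbolic splitting, its constants, and the size of local stable/unstable holonomy descend to the compact quotient $M$), so that the local holonomy has domain of a uniform size along every center leaf.
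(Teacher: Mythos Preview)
Your existence argument has a genuine gap. You write that for $n$ large the local holonomy $h^{s,\loc}_{\tf^n(x),\tf^n(y)}$ is defined on a ``uniformly large'' center-neighborhood of $\tf^n(x)$, and that ``since $\tf$ acts as a topological conjugate of a translation along each center leaf, any prescribed $\tf^n(z)$ lies in this neighborhood for $n$ sufficiently large.'' But conjugacy to a translation says exactly the opposite: the center-distance between $\tf^n(z)$ and $\tf^n(x)$ stays essentially constant (bounded above and below, uniformly in $n$). So if $z$ starts at center-distance $D$ from $x$ larger than the uniform radius $R$ of the local-holonomy domain, then $\tf^n(z)$ never enters that domain, and your iteration produces nothing. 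The uniform lower bound you secure in your final paragraph is necessary but not sufficient.

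The paper closes this gap by a second reduction you are missing: since the sought intersection $\cW^s_w\cap\cW^c_{v'}$ depends only on the leaves and not on the chosen base points, one may replace $(v,v')$ by $(\tf^k(v),\tf^k(v'))$ for any $k$. Because $\tf$ acts on each center leaf close to translation by~$1$, there is a $k$ with $d_c(\tf^k(v),w)\le 2$; choosing $k$ large also forces $d(\tf^k(v),\tf^k(v'))<\epsilon$ by stable contraction. One is thus reduced to the local situation ``$w$ within distance $2$ of $v$, and $v'$ within $\epsilon$ of $v$,'' where existence and uniqueness are immediate because the stable holonomy for $\tf$ between $\epsilon$-close center leaves is uniformly close to that of $\tvarphi_1$, which is an isometry. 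This two-step dynamical reduction replaces your HPS leaf-conjugacy and planar-topology apparatus entirely; your uniqueness argument via transverse foliations of $\RR^2$ is correct, but the paper obtains uniqueness for free from the same local picture.
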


\begin{proof}
To check that $\tf$ admits global stable holonomy maps, we must show that for every
$v,v'\in \tM$ with $v'\in \cW^s(\tf)_v$, and for any $w\in  \cW^c(\tf)_v$,
there is a unique point $w'$ in the intersection $ \cW^s(\tf)_w \cap \cW^c(\tf)_{v'}$.
Since $\tf$ acts on center leaves close to a translation by $1$, and uniformly contracts
stable leaves, it suffices to prove this claim for $w$ lying a distance $\leq 2$ from $v$ and
$v'$ a fixed small distance from $v$.  But the claim clearly holds in this case, since the
stable holonomy for $\tf$ between center leaves at a distance $\leq \epsilon$ is
uniformly close to the stable holonomy of $\varphi_1$, which is an isometry.
This proves that $\tf$ has global stable holonomy. The proof for unstable holonomy is analogous.
\end{proof}

The fact that $\tf$ admits global $su$-holonomy allows us to construct a fiber bundle $\tcE$
over $\tM$ whose fibers are leaves of the center foliation $\cW^c(\tf)$, as follows.
For $v,w\in \tM$ and $\ast\in\{s,c,u\}$, we write $v\sim_\ast w$ if $v\in \cW^\ast(\tf)_w$.
Let
$$
\tcE = \{(v,w) \in \tM^2\,\mid\, v\sim_c w\}
$$
and let $\tp_1$, $\tp_2\colon \tcE \to \tM$ be the coordinate projections onto the first and
second $\tM$ factor, respectively.

\begin{lemma}
The projection
$\tp_1\colon \tcE\to \tM $ defines a fiber bundle with the following properties:
\begin{enumerate}
\item $\tp_2$ sends each fiber $\tcE_v=(\tp_1)^{-1}(v)$,  $v\in \tM$ homeomorphically onto the
      center leaf $\cW^c(\tf)_v$;
\item $\tcE$ admits a canonical continuous ``diagonal" section sending each $v\in \tM$ to
      $(v,v)=(\tp_2)^{-1}(v) \cap \tcE_v$.
%\item there is a canonical continuous map $\NF: \tcE\to\tcE$
   %   satisfying $p_1\circ\NF = f \circ p_1$ and $p_2\circ\NF = f \circ p_2$;
%\item the fiber bundle admits $\NF$-invariant stable, unstable  and center foliations $\cF^\ast$,
%$\ast\in \{s,u,c\}$  projecting under $\pi$ to the corresponding foliations $\cW^\ast$ in $M$.
\end{enumerate}	
\end{lemma}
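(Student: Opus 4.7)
The plan is to dispose of properties (1) and (2), which are nearly tautological, and then to concentrate on the one nontrivial point: local triviality of $\tp_1$.

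For property (1), observe that $\tcE_v = \{v\} \times \cW^c(\tf)_v$, so $\tp_2$ restricted to this fiber is simply the projection $(v,w) \mapsto w$, a continuous bijection onto $\cW^c(\tf)_v$. Because $\tf$ is $C^1$-close to $\tvarphi_1$ and the $\tvarphi_1$-orbits are properly embedded lines in $\tM$ (their projections to $\tS$ are geodesics in the Cartan--Hadamard space $\tS$), the center leaves of $\tf$ remain properly embedded, so the subspace topology on $\cW^c(\tf)_v$ agrees with its intrinsic topology and the inverse is also continuous. For property (2), the map $v\mapsto(v,v)$ is continuous from $\tM$ to $\tM\times\tM$ and lands in $\tcE$ since $v\sim_c v$ trivially; it is plainly a section of $\tp_1$.

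The main task is local triviality. The strategy is to identify $\cW^c(\tf)_{v_0}$ with $\cW^c(\tf)_v$, for $v$ in a small neighborhood $U$ of $v_0$, by a continuous family of homeomorphisms built from global $su$-holonomy. Since $\tf$ is dynamically coherent and partially hyperbolic, after shrinking $U$ if necessary, for every $v\in U$ there is a unique $v_s \in \cW^s_{\loc}(v_0)$ with $v\in\cW^{cu}_{\loc}(v_s)$, and inside $\cW^{cu}_{\loc}(v_s)$ a unique $v_{sc}\in\cW^c_{\loc}(v_s)$ with $v\in\cW^u_{\loc}(v_{sc})$; both assignments are continuous in $v$ from the local product structure of the invariant foliations. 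Using the global $su$-holonomy from Lemma~\ref{l.globalholonomies}, I would set
$$
\Phi_v := h^u_{v_{sc},v} \circ h^s_{v_0,v_s}\colon \cW^c(\tf)_{v_0} \to \cW^c(\tf)_v,
$$
well-defined because $\cW^c(\tf)_{v_s}=\cW^c(\tf)_{v_{sc}}$, and declare the candidate local trivialization to be
$$
\Psi\colon U \times \cW^c(\tf)_{v_0} \to \tp_1^{-1}(U), \qquad (v,w) \mapsto (v,\Phi_v(w)).
$$

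The hard part is verifying that $\Psi$ and $\Psi^{-1}$ are continuous. This reduces to the joint continuous dependence of strong stable and unstable holonomies between center leaves on the pair of basepoints, uniformly on compact pieces of the fibers. This is a classical consequence of continuity of the strong stable and unstable foliations for partially hyperbolic diffeomorphisms; combined with the continuity of $v\mapsto v_s$ and $v\mapsto v_{sc}$, it yields that $\Psi$ is the desired homeomorphism, and hence that $\tp_1$ is a fiber bundle.
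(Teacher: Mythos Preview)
Your proof is correct and follows essentially the same approach as the paper: for local triviality you build a family of homeomorphisms $\cW^c(\tf)_{v_0}\to\cW^c(\tf)_v$ by composing a global stable holonomy with a global unstable holonomy (via the local product structure of the invariant foliations and Lemma~\ref{l.globalholonomies}), and properties (1) and (2) are treated as the near-tautologies they are. The only cosmetic difference is the choice of intermediate point inside the common $cu$-leaf---you take $v_{sc}\in\cW^c_{\loc}(v_s)\cap\cW^u_{\loc}(v)$ while the paper takes $w'\in\cW^u_{\loc}(v_s)\cap\cW^c_{\loc}(v)$---but the resulting unstable holonomy $\cW^c(\tf)_{v_s}\to\cW^c(\tf)_v$ is the same map either way, so the trivializations coincide.
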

We remark that the conclusions of this lemma hold with the roles of $\tp_1$ and $\tp_2$ switched.
When we refer to the ``fiber bundle $\tcE$" it is with respect to the first projection $\tp_1$.

\begin{proof}
Given any $v\in \tM$ and $v'$ in a small neighborhood $U$ of $v$ in $\tM$,
define $w$ to be the point in $\cW^{s}_{\loc}(\tf)_v \cap \cW^{cu}_{\loc}(\tf)_{v'}$ and
$w'$ to be the point in $\cW^{u}_{\loc}(\tf)_w \cap \cW^{c}_{\loc}(\tf)_{v'}$.
Notice that $w$ and $w'$ depend continuously on $v'$.
Then $h_{v,v'}=h^u_{w,w'} \circ h^s_{v,w}$ is a homeomorphism from $\cW^c(\tf)_v$ to
$\cW^c(\tf)_{v'}$ that depends continuously on $v'$. It follows that
$$
g_{v,U} : U \times \cW^c(\tf)_v\to \pi^{-1}(U), \quad
(v', \eta) \mapsto (v', h_{v,v'}(\eta))
$$
is a homeomorphism mapping each vertical $\{v'\}\times \cW^c(\tf)_v$ to $(\tp_1)^{-1}(v')$.
This defines on $\tcE$ the structure of a continuous fiber bundle.
It is clear that every fiber $(\tp_1)^{-1}(v)=\{v\}\times\cW^c(\tf)_v$ is mapped
homeomorphically to $\cW^c(\tf)_v$ by the second projection $\tp_2$, as claimed in (1).
The diagonal embedding $\tM \to\tcE$ defines a section as in (2).
\end{proof}

The fundamental group $\pi_1(S)$ acts on $\tM$ by isometries preserving the $\sim_\ast$
equivalence relations:
$$
v\sim_\ast w \,\implies \, \gamma v\sim_\ast \gamma w,\quad
\text{for all $\gamma\in \pi_1(S)$, $v,w\in \tM$ and $\ast\in\{s,u,c\}$}.
$$
Consider the  induced diagonal action of $\pi_1(S)$ on $\tM^2$.
Since this action preserves the $\sim_c$ relation and the product structure,
it preserves the fiber bundle $\tcE$. The stabilizer of each fiber of $\tcE$
under this action is trivial.

There is also a  $\ZZ\times \ZZ$-action $\trho$ on $\tM^2$  commuting with the $\pi_1(S)$-action, defined by
$$
\trho(m,n)(x,y) = (\tf^n(x), \tf^m(y)).
$$
Then $\trho$ also preserves the $\sim_\ast$ equivalence relations and in particular
defines an action on $\tcE$. The action of $\trho(1,0)$ on each $\tp_1$-fiber is
topologically conjugate to a translation, and the action of $\trho(0,1)$  on each
$\tp_2$-fiber is also conjugate to a translation.
Let $\tNF = \trho(1,1)$ and  $\tNG = \trho(0,1)$.  Note that
$$
\tp_1\circ \tNF
 = \tf \circ \tp_1, \quad \tp_2\circ \tNF
  = \tf\circ \tp_2, \quad \tp_1\circ \tNG
   = \tp_1, \quad\hbox{and} \quad \tp_2\circ \tNG
    = {\tf}\circ \tp_2.
$$

Let $\cE$ be the quotient of $\tcE$ by the diagonal $\pi_1(S)$-action.  Denote by
$p_i \colon \cE \to M$, $i=1, 2$ the quotient projections.  The fibers of
$p_1\colon \cE\to M$ are homeomorphic to $\RR$, and for any $v\in M$,
$$
p_2\circ p_1^{-1}(v) = p_1\circ p_2^{-1}(v) = \cW^c(f)_v.
$$
Since $\trho$ commutes with the $\pi_1(S)$-action on $\tcE$, it also induces an action on
the bundle $\cE$, which we denote by $\rho$. Let $\NF = \rho(1,1)$ and $\NG = \rho(0,1)$. Then
$$
p_1\circ \NF = f \circ p_1, \quad p_2\circ \NF = f\circ p_2, \quad p_1\circ \NG
= p_1, \quad\hbox{and} \quad p_2\circ \NG = {f}\circ p_2.
$$

The fiber $\cE_v$ of $\cE$ over $v\in M$ is naturally identified with the leaf
$\cW^c(\tf)_{v'}$ through any lift $v'$ of $v$ to $\tM$. The action of $\NG$ on
this fiber is then naturally identified with the action of $\tf$ on this leaf.
For almost every (for all but countably many) $v\in M$, the leaf $\cW^c(f)_v$ is
noncompact and hence is canonically identified with any lift to $\tM$.
For such $v$, we identify $\cE_v$ with $\cW^c(f)_v$ and the action of $\NG$ on
$\cE_v$ with the action of $f$.

We define $\tNF$-invariant foliations $\tcF^\ast$ of the bundle $\tcE$ whose leaves project
homeomorphically under $\tp_1$ to leaves of $\cW^\ast(\tf)$, as follows:
$$
\begin{aligned}
\tcF^{\ast}_{(v,w)}
 & = \{ (v',w') \in \tcE \mid v'\sim_\ast v \text{ and } w' \sim_\ast w\} \\
% & = \{ (v',w') \in \tM^2 \mid v'\in \cW^*(\tf)_v \text{ and } w' \in \cW^*(\tf)_w \cap \cW^c(\tf)_{v'}\}
\end{aligned}
$$
for $\ast\in\{s,u\}$ (recall Lemma \ref {l.globalholonomies}) and
$$
\tcF^{c}_{(v,w)} = \{ (v',w)  \in \tcE \mid v' \sim_c v \}
                = \{ (v',w) \in \tcE \mid v' \sim_c w \} = (\tp_2)^{-1}(w) .
$$
It follows from the construction that $\tcF^\ast$  is invariant under the action
$\trho$.
Notice that for $\ast \in \{s,u\}$, the leaves of $\tcF^*$ also project homeomorphically under
$\tp_2$ to leaves of $\cW^\ast(\tf)$.

Let $\cF^\ast$ be the induced quotient foliations of $\cE$. Those foliations are clearly $\NF$-invariant.
By definition, for each $\ast\in \{s,u\}$ and every $v,v'$ lying in the same $\cW^\ast$-leaf in $M$,
there exists a holonomy map
\begin{equation}\label{eq.holonomy1}
H^\ast_{v,v'}\colon \cE_v\to \cE_{v'}
\end{equation}
sending $\xi\in \cE_v$ to the unique point $H^\ast_{v,v'}(\xi)$ in the intersection $\cF^\ast(\xi)\cap \cE_{v'}$.
Invariance of the foliations $\cF^\ast$ under $\NF$ implies that for $\ast\in\{s,u\}$, we have:
\begin{equation}\label{eq.holonomy2}
\NF \circ H^{\ast}_{v,v'} =  H^{\ast}_{f(v),f(v')}   \circ \NF.
\end{equation}
In other words, \emph{the cocycle $\NF: \cE \to \cE$ admits $su$-holonomy}.
It will also be useful to consider the \emph{$c$-holonomy}
\begin{equation}\label{eq.holonomy3}
H^c_{v,v'}\colon \cE_v\to \cE_{v'},
\end{equation}
which is given by $(v,w) \mapsto (v',w)$ for every $v,v'$ in the same $\cW^c$-leaf in $M$.
The invariance property \eqref{eq.holonomy2} remains valid for the $c$-holonomy.

\subsection{Constructing a measure on $\cE$}

Denote by $\tm$ the $\pi_1(S)$-invariant lift of $m$ to $\tM$.
It is a $\sigma$-finite, $\tf$-invariant measure whose restriction to any $\pi_1(S)$
fundamental domain is a probability measure that projects to $m$ on $M$.
We next construct a Radon measure $\tmE$ on $\tcE$ that projects to $\tm$,
whose restriction to a $\langle \tNG,\pi_1(S) \rangle$ fundamental domain is a probability measure
and which is $\trho$-invariant and $\pi_1(S)$-invariant.

For $v$ and $w$ lying in the same $\cW^c(\tf)$-leaf,  we denote by $[v,w)$ the
positively oriented arc in  $\cW^c(\tf)_v$ from $v$ to $w$.
Let $\{\tcm_v\}$ be a disintegration of $\tm$ along $\cW^c(\tf)$-leaves.
For each $v\in \tM$, choose a representative $\tm_v$ of the conditional class $\tcm_v$
normalized by
\begin{equation}\label{eq.normalization}
\tm_v\big([v,\tf(v))\big)=1.
\end{equation}
(By $\tf$ invariance, the class $\tcm_v$ is nonvanishing over a fundamental domain
of the action of $\tf$ on $\cW^c(\tf)_v$, for $\tm$-almost every $v$, so that
\eqref{eq.normalization} does make sense.) This choice of a normalization immediately
implies that
\begin{equation} \label{eq.normalization3}
\tf_* \tm_v = \tm_{\tf(v)}.
\end{equation}
Moreover, using Proposition~\ref{eq.invarianceofconditionals},
\begin{equation*}%\label{eq.normalization2}
\tm_v\big([w,\tf(w))\big)
%= \tm_v\big([v,\tf(v))\big)-\tm_v\big([v,w)\big)+ \tm_v\big([\tf(v),\tf(w))\big)
= \tm_v\big([v,\tf(v))\big) = 1
\quad \text{for every $w \in \cW^c(\tf)_v$,}
\end{equation*}
so that we have
\begin{equation} \label{eq.normalization4}
\tm_w=\tm_v \quad \text{for every $w \in \cW^c(\tf)_v$.}
\end{equation}
Then $\tmE=\tm_v d\tm(v)$ defines a Radon measure on $\tcE$ that is
$\tNF$-invariant, by the choice of normalization \eqref{eq.normalization},
$\tNG$-invariant, because of property \eqref{eq.normalization3}, and
$\pi_1(S)$-invariant, since $\tm$ is.

The measure $\tmE$ projects to a measure $\mE$ on $\cE$; writing $\mE= m_v dm(v)$,
the conditional measure $m_v$ of $\mE$ on each fiber $\cE_v$ is naturally identified
with the measure $\tm_{v'}$, where $v'$ is any lift of $v$ to $\tM$. In particular,
\begin{equation} \label{eq.normalization6}
m_w=m_v \quad \text{for every $w \in \cW^c(f)_v$ and}
\end{equation}
\begin{equation} \label{eq.normalization5}
f_*m_v = m_{f(v)} = m_v \quad\text{for every $v\in M$}.
\end{equation}
Property \eqref{eq.normalization6} may be rewritten as $(H^c_{v,w})_*m_v = m_w$ for
every $v, w$ in the same center leaf; we say that the family $\{m_v\}$ is {\em invariant
under $c$-holonomy}.
For those $v\in M$ for which the center leaf is noncompact, the measure $m_v$ can be
naturally regarded as a measure on $\cW^c(f)_v$ via the push forward under $p_2 \mid \cE_v$.

Let $\tSigma \subset \tcE$ be the ``half-closed'' set bounded by the diagonal section
of $\tcE$ and its image under $\tNG$, including the former and excluding the latter.
Notice that $\tSigma$ is $\tNF$-invariant and $\pi_1(S)$-invariant.
We denote by $\tmS$ the restriction of the measure $\tmE$ to $\tSigma$.
Then $\tmS$ is also $\tNF$-invariant and $\pi_1(S)$-invariant.

\begin{lemma}\label{l.projectionsagree}
$(\tp_1)_\ast\tmS =\tm = (\tp_2)_\ast\tmS$.
\end{lemma}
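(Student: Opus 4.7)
The key input is the explicit formula $\tmE=\tm_v\,d\tm(v)$ coupled with the description of $\tSigma$: for every $v\in\tM$, the fiber $\tp_1^{-1}(v)\cap\tSigma$ is exactly the positively oriented arc $[v,\tf(v))\subset\cW^c(\tf)_v\simeq\tcE_v$, while $\tp_2^{-1}(w)\cap\tSigma$ is the arc $(\tf^{-1}(w),w]\subset\cW^c(\tf)_w$ (since $w\in[v,\tf(v))$ is equivalent to $v\in(\tf^{-1}(w),w]$). So for any bounded Borel test function $h$ on $\tcE$,
\[
\int h\,d\tmS=\int_{\tM}\int_{[v,\tf(v))}h(v,w)\,d\tm_v(w)\,d\tm(v).
\]

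For the first equality, I would take $h(v,w)=g(v)\mathbf{1}_\tSigma(v,w)$ in the displayed formula to obtain $\int g\,d(\tp_1)_\ast\tmS=\int_{\tM}g(v)\,\tm_v\bigl([v,\tf(v))\bigr)\,d\tm(v)$, which is $\int g\,d\tm$ by the normalization \eqref{eq.normalization} that forces $\tm_v([v,\tf(v)))=1$. This step is essentially a tautology once one has set up $\tmS$ using the correct disintegration of $\tm$.

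For the second equality, I would take $h(v,w)=g(w)\mathbf{1}_\tSigma(v,w)$ and swap the order of integration. The legitimacy of the swap rests on the leafwise consistency \eqref{eq.normalization4}, which says $\tm_v=\tm_w$ whenever $v\sim_c w$; combined with the fact that the arcs $[v,\tf(v))$ are $\tm_v$-probabilities, Fubini applied inside each center leaf (where we have a $\sigma$-finite measure $\tm_L:=\tm_v$ intrinsically attached to $L$) yields
\[
\int g\,d(\tp_2)_\ast\tmS=\int_{\tM}g(w)\,\tm_w\bigl((\tf^{-1}(w),w]\bigr)\,d\tm(w).
\]
It then remains to check that $\tm_w((\tf^{-1}(w),w])=1$. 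This follows from $\tf$-invariance of $\tm_w$: property \eqref{eq.normalization3} together with \eqref{eq.normalization4} gives $\tf_\ast\tm_w=\tm_{\tf(w)}=\tm_w$, so pulling back the identity $\tm_w([w,\tf(w)))=1$ under $\tf$ gives $\tm_w([\tf^{-1}(w),w))=1$; finally, the symmetric difference between $[\tf^{-1}(w),w)$ and $(\tf^{-1}(w),w]$ consists of the two endpoints, which have equal $\tm_w$-mass (again by $\tf$-invariance applied to $\{w\}$), so the two intervals have the same measure, namely $1$.

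The only subtle point is the Fubini swap: one must ensure that the inner arcs have finite mass (they do, by normalization) and that a single conditional measure on each center leaf serves both iterated integrals, which is precisely the content of \eqref{eq.normalization4}. Once this bookkeeping is in place the computation is essentially forced by the definition of $\tSigma$ as a fundamental domain for $\tNG$.
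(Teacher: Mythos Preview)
Your treatment of the first equality matches the paper's. For the second equality the paper takes a genuinely different route: rather than computing $(\tp_2)_\ast\tmS$ directly, it verifies that this measure (i) is the $\pi_1(S)$-invariant lift of a probability on $M$, (ii) is $\tf$-invariant (because $\tmS$ is $\tNF$-invariant and $\tp_2\circ\tNF=\tf\circ\tp_2$), and (iii) is absolutely continuous with respect to $\tm$ (because any $\tm$-null set meets almost every leaf in a $\tm_v$-null set). Since $f$ is ergodic with respect to $m$, the measure $\tm$ is the unique one with properties (i)--(iii), and the conclusion follows.

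Your direct approach can be made to work, but the Fubini swap needs more care than you give it. Saying ``Fubini applied inside each center leaf'' implicitly presumes a decomposition $d\tm(v)=d\tm_L(v)\,d\nu(L)$ over the leaf space, but the center foliation is not a measurable partition (the leaves are noncompact), so no such global decomposition is available; property \eqref{eq.normalization4} tells you only that the leafwise measures agree, not how to integrate over leaves. To make the swap rigorous one must localize: since $(v,w)\in\tSigma$ forces $v$ and $w$ to lie at bounded center-distance, $\tSigma$ can be covered by sets of the form $(\cB\times\cB)\cap\tcE$ for foliation boxes $\cB$, and there the local Rokhlin disintegration of $\tm\mid\cB$ produces a genuine product measure on each local leaf, after which ordinary Fubini on $\RR^2$ applies. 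The paper's ergodicity argument trades this bookkeeping for a single soft appeal to an already-established dynamical fact, at the cost of being less explicit about where the mass actually sits.
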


\begin{proof}
The first equality is a direct consequence of the normalization \eqref{eq.normalization}.
To prove the second one, begin by noting that
\begin{itemize}
\item [(i)] $(\tp_2)_\ast\tmS$ is the $\pi_1(S)$-invariant lift of a probability
measure on $M$.
\end{itemize}
Indeed $\tmS$ is $\pi_1(S)$-invariant, and if $\tSigma_0 \subset \tSigma$
is a fundamental domain for the $\pi_1(S)$-action on $\tcE$ then $\tmE(\tSigma_0)=1$,
since $(\tp_1)_\ast (\tmE \mid \tSigma)=\tm$. Moreover,
\begin{itemize}
\item [(ii)] $(\tp_2)_\ast\tmS$ is $\tf$-invariant.
\end{itemize}
That is because $\tmS$ is $\tNF$-invariant. Furthermore,
\begin{itemize}
\item [(iii)] $(\tp_2)_\ast\tmS$ is absolutely continuous with respect to $\tm$.
\end{itemize}
Indeed if a Borel set $X \subset \tM$ has zero $\tm$-measure, then $\tm_v(X)=0$
for $\tm$-almost every $v \in \tM$, and then the definition of $\tmE$ gives
$\tmE((\tp_2)^{-1}(X))=0$.
Since $f$ is $m$-ergodic, $\tm$ is the unique measure on $\tM$ satisfying properties (i)-(iii).
So, $(\tp_2)_\ast \tmS=\tm$ as claimed.
\end{proof}

Let $\Sigma$ be the projection of $\tSigma$ to $\cE$; equivalently, $\Sigma$ is the ``half-closed''
set bounded by the diagonal section of $\cE$ and its image under $\NG$, including the former and
excluding the latter. Let $\mS$ be the probability measure on $\cE$ induced by $\tmS$.
Note that $\mS$ gives zero measure to the complement of $\Sigma$, and hence
it is supported on the closure of $\Sigma$.
Moreover, $\mS$ is $\NF$-invariant
and satisfies
\begin{equation}\label{eq.projectingmeasures}
(p_1)_\ast \mS = m = (p_2)_\ast \mS.
\end{equation}
Recalling that almost every fiber $\cE_v$ is naturally identified with $\cW^c(f)_v$, we can write
$\mS =\left( m_v \mid [v,f(v))\right) \, dm(v)$.

\subsection{Lyapunov exponents}

Let $\chi^c(v)$ denote the center Lyapunov exponent of $f$ at a point $v\in M$,
that is
$$
\chi^c(v) = \lim_{n\to\infty} \frac{1}{n} \log \|Df^n \mid E^c_{v}\|.
$$
By ergodicity, there exists $\chi^c\in\RR$ such that $\chi^c(v)=\chi^c$ for
$m$-almost every $v\in M$. Since $E^c$ is $1$-dimensional, the ergodic
theorem ensures that $\chi^c$ can be expressed as an integral
$$
\chi^c = \int_{M} \log \|Df \mid E^c_v \| \, dm(v),
$$
with respect to any fixed Riemann structure on $M$.

\begin{lemma}\label{l.exponentscoincide}
The fiberwise exponent of the cocycle $\NF$ exists at a point $\xi\in \cE$ if and only
if the center Lyapunov exponent for $f$ exists at $p_2(\xi)$, and then the two are equal:
$$
\lim_{n\to \infty} \frac{1}{n} \log \|D_\xi \NF^n \|
= \chi^c(p_2(\xi)).
$$
In particular, the fiberwise exponent of the cocycle $\NF$ is equal to $\chi^c$ almost
everywhere with respect to $\mS$.
\end{lemma}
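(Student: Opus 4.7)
The plan is to trace through the identification of the fibers of $\cE$ with center leaves of $f$, under which the cocycle $\NF$ acts as $f$ on center leaves, so that fiberwise derivatives of $\NF$ coincide with derivatives of $f$ restricted to $E^c$.

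First I would fix the identifications. For $v\in M$ with noncompact center leaf -- a full $m$-measure set, since the complement is countable -- the second projection $p_2\mid \cE_v \colon \cE_v \to \cW^c(f)_v$ is a homeomorphism, by the preceding discussion. Endow $\cE_v$ with the smooth and Riemannian structure pulled back from $\cW^c(f)_v$ via $p_2$, so that $p_2\mid\cE_v$ becomes an isometry. Since $p_1\circ\NF=f\circ p_1$ and $p_2\circ\NF=f\circ p_2$, the map $\NF$ carries $\cE_v$ to $\cE_{f(v)}$, and under the identification with center leaves its restriction is precisely $f\mid \cW^c(f)_v$.

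Next I would pass to tangent vectors. For $\xi=(v,w)\in\cE_v$, the differential $dp_2$ identifies $T_\xi\cE_v$ with $T_w\cW^c(f)_v=E^c_w$ isometrically. Under this identification the fiberwise derivative $D_\xi\NF$ becomes $D_wf\mid E^c_w$. Because $E^c$ is one-dimensional, iterating gives
\[
\|D_\xi\NF^n\|=\|D_wf^n\mid E^c_w\|, \qquad w=p_2(\xi),
\]
for every $n\ge 0$. Taking logarithms and dividing by $n$ shows that the fiberwise exponent of $\NF$ at $\xi$ exists if and only if $\chi^c(p_2(\xi))$ exists, and the two values then coincide.

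Finally, for the $\mS$-almost everywhere statement, ergodicity of $f$ with respect to $m$ gives $\chi^c(w)=\chi^c$ for $m$-a.e.\ $w\in M$. Since $(p_2)_\ast\mS=m$ by the second equality in \eqref{eq.projectingmeasures}, this transfers to $\chi^c(p_2(\xi))=\chi^c$ for $\mS$-a.e.\ $\xi\in\cE$. I do not expect a serious obstacle here; the only delicate points are the choice of smooth structure on the fibers (which is built in precisely so that $p_2$ restricted to a fiber is a diffeomorphism) and the treatment of the countably many compact center leaves, which carry zero $m$-measure and therefore zero $\mS$-measure.
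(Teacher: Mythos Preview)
Your proposal is correct and follows essentially the same route as the paper: endow the fibers with the Riemannian metric pulled back via $p_2$, so that $\|D_\xi\NF^n\|=\|D_{p_2(\xi)}f^n\mid E^c\|$, and then use $(p_2)_*\mS=m$ for the almost-everywhere conclusion. The paper adds one small point you omit---each $\NF$-orbit is precompact (the $\NG^k(\Sigma)$ partition $\cE$ into precompact $\NF$-invariant sets), so the fiberwise exponent is independent of the chosen metric; this also renders your detour around compact center leaves unnecessary, since $p_2$ is an immersion on every fiber.
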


\begin{proof}
Note that the $\NF$-orbit of any $\xi \in \cE$ is precompact (indeed,
$\{\NG^k(\Sigma)\}_{k \in \ZZ}$ is a partition of $\cE$ into precompact $\NF$-invariant sets),
and so the existence and value of the fibered Lyapunov exponent at $\xi$ do not
depend on a particular choice of a fiberwise Riemannian metric.
Since $p_2$ restricts to an immersion on each fiber of $\cE$,
a particular choice of fiberwise Riemannian metric can be obtained by pulling
back the Riemannian metric on $M$ under $p_2$.  With respect to this metric,
we have the identity $\|D_{\xi} \NF\|=\|Df \mid E^c_{p_2(\xi)}\|$ (where the
derivative of $\NF$ is taken along the fibers of $\cE$). The conclusion follows.
\end{proof}

\section{The atomic case}

At this point there are two very different cases in our analysis: $\chi^c\neq 0$ and $\chi^c = 0$.
The first is handled easily by existing methods and implies that $\cW^c(f)$ has atomic
disintegration of volume. In handling the second case, we will introduce the meat of
the arguments in this paper.

\subsection{The case of nonvanishing center exponents}

Suppose that $\chi^c\neq 0$.  Let $X = \{v\in M \,\mid\, \chi^c(v) = \chi^c\}$,
which is a full measure subset of $M$.  Let $\cX = p_2^{-1}(X)\cap \Sigma$;
Lemma~\ref{l.exponentscoincide} implies that  $\cX$ is the set of $\xi\in \Sigma$
where the fiberwise exponent of $\NF$ is equal to $\chi^c$.
We want to use Theorem~\ref{t.rwmain} to conclude that $\cX$ coincides, up to zero
$\mS$-measure, with a measurable set $\cY\subset\Sigma$ meeting almost every fiber
of $\cE$ in finitely many points.

Strictly speaking, the theorem does not apply directly to the fiber bundle $\Sigma \to M$,
because its fibers are not compact.
However, this can be turned into a fiber bundle $\overline\Sigma \to M$ with compact fibers:
just take $\overline\Sigma$ to be the quotient of $\cE$ by $\NG$, so that the quotient map
restricts to a continuous bijection $P:\Sigma \to \overline\Sigma$.
The map $\NF$ goes down to the quotient to define a smooth cocycle $\overline \NF$ on
$\overline \Sigma$, which admits an invariant measure $P_\ast (\mS \mid \Sigma)$.
Fix an arbitrary Riemannian metric on the fibers of $\overline\Sigma$ depending continuously
on the base point; any two such metrics are uniformly equivalent, since $\overline\Sigma$
is compact. Notice that the restriction of $P$ to each fiber is smooth, with derivative
uniformly bounded away from zero and infinity, and so the fibered Lyapunov exponent of
$\overline\NF$ with respect to $P_\ast(\mS \mid \Sigma)$ is the same as the Lyapunov exponent
of $\NF$ with respect to $\mS\mid\Sigma$. Thus, we can apply Theorem~\ref{t.rwmain} in
$\overline\Sigma$, and then take the preimage under $P$ to obtain the conclusion in $\Sigma$.

By construction, the family $\{m_v \mid [v, f(v)) : v\in M\}$ is a disintegration of $\mS$
along $\cE$ fibers. Since $\cY$ has full $\mS$-measure, its intersection with almost every fiber
has full conditional measure on the fiber. This implies that $m_v \mid [v,f(v))$ is atomic,
with finitely many atoms, for $m$-almost every $v$. The function that assigns to each $v\in M$
the number of atoms is a measurable, $f$-invariant function. So, ergodicity of $f$ implies that
this number is $m$-almost everywhere constant. Let $k\ge 1$ be this constant.
Then there exists some full $\mS$-measure set $\cZ\subset \Sigma$ whose intersection with
almost every fiber $\cE_v$ coincides with the support of $m_v \mid [v,f(v))$ and contains
exactly $k$ points.

The projection $p_2(\cZ)$ is a full $m$-measure subset of $M$, by property \eqref{eq.projectingmeasures}.
Moreover, $p_2(\cZ)$ is $f$-invariant, because $m_v$ is $f$-invariant;
recall \eqref{eq.normalization5} and \eqref{eq.normalization6}. Since $[v,f(v))$ is a fundamental
domain for the action of $f$ on any noncompact center leaf, it follows that the intersection of
$p_2(\cZ)$ with almost every $\cW^c(f)_v$ consists of exactly $k$ orbits, whose points are the
atoms of the corresponding measure $m_v$. Then $p_2(\cZ)$ coincides, up to zero $m$-measure,
with some measurable set that intersects every center leaf in exactly $k$ orbits.
So, alternative (1) of Theorem A holds in the case where $\chi^c\neq 0$.

\subsection{Vanishing center exponents: using the invariance principle}

Now let us suppose that $\chi^c= 0$. Using the invariance principle stated in
Theorem~\ref{t.asvmain}, we prove:

\begin{lemma}\label{l.jimmy}
%Suppose that $\chi^c = 0$.
There is a continuous\footnote{We recall that the space of Radon measures on
$\cE$ can be seen as a cone in the dual of the space of
compactly supported continuous functions on $\cE$, and
hence inherits a natural weak-$*$ topology.}
family $\{\hm_v: v\in M\}$ of Radon measures
on the fibers of $\cE$ with the following properties:
\begin{enumerate}
\item $\hm_v = m_v$  for $m$-almost every $v\in M$;
\item the family is $\rho$-invariant; in particular,
$$
\NF_\ast \hm_v = \hm_{f(v)} \quand \NG_\ast \hm_v = \hm_{v}\quad\text{for all $v\in M$;}
$$
\item the family is invariant under $su$-holonomy:
$$
(H^s_{v,v'})_\ast \hm_v = \mSc_{v'} \quand (H^u_{w,w'})_\ast\mSc_w = \mSc_{w'}
$$
for all $v'\in \cW^s(f)_v$ and $w'\in \cW^u(f)_w$.
\end{enumerate}
\end{lemma}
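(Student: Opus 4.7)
The plan is to apply the invariance principle, Theorem~\ref{t.asvmain}, after first compactifying the fibers of $\NF$ by passing to the quotient $\overline\Sigma = \cE/\NG$, and then to transport the resulting continuous, $su$-invariant disintegration back to $\cE$ by $\NG$-translation.

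I first verify that Theorem~\ref{t.asvmain} applies to the quotient cocycle $\overline\NF\colon\overline\Sigma\to\overline\Sigma$ over $f$, with invariant probability $P_\ast\mS$. Since $\rho$ is a $\ZZ^2$-action, $\NF$ and $\NG$ commute, so $\NF$ descends to $\overline\NF$. Because $\NG$ fixes the first coordinate and acts as $\tf$ on the second, and $\tf$ preserves $\cW^s$ and $\cW^u$, the map $\NG$ preserves each leaf of $\cF^s$ and $\cF^u$; consequently the holonomies $H^s,H^u$ commute with $\NG$ and descend to holonomies $\overline H^s,\overline H^u$ for $\overline\NF$. The fiberwise Lyapunov exponent of $\overline\NF$ with respect to $P_\ast\mS$ equals that of $\NF$ with respect to $\mS$ (since $P$ restricts on each fiber to a smooth map with derivative uniformly bounded above and below), and by Lemma~\ref{l.exponentscoincide} and the standing assumption $\chi^c=0$ this common value is zero. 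Because $\dim E^c = 1$ we have $\bar\chi=\underline\chi=0$, and because $f$ is $C^2$, volume-preserving, accessible, and (automatically) center bunched, Theorem~\ref{t.asvmain} yields a continuous, $\overline H^s$- and $\overline H^u$-invariant disintegration $\{\overline{\hm}_v:v\in M\}$ of $P_\ast\mS$.

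Pulling back through the fiberwise bijection $P|_{\Sigma_v}$ gives a continuous family of probabilities $\hm_v^\Sigma$ on $\Sigma_v$, and I define
$$
\hm_v := \sum_{k\in\ZZ}(\NG^k)_\ast \hm_v^\Sigma,
$$
a Radon measure on $\cE_v$ which is $\NG$-invariant by construction and continuous in $v$ because the partition of $\cE_v$ into the precompact pieces $\NG^k(\Sigma_v)$ varies continuously. Property (1) follows from uniqueness of disintegration: the restrictions $\{m_v|_{\Sigma_v}\}$ already disintegrate $\mS$, so their $P$-pushforward disintegrates $P_\ast\mS$ and must agree with $\overline{\hm}_v$ for $m$-a.e.\ $v$; combined with the $\NG$-invariance of $m_v$ coming from \eqref{eq.normalization6} and \eqref{eq.normalization5}, this gives $\hm_v = m_v$ a.e. For (2), $\NG$-invariance is automatic, and $\NF$-invariance follows by applying uniqueness to $\overline\NF_\ast\overline{\hm}_v$ (which also disintegrates $P_\ast\mS$ in the fiber over $f(v)$) to conclude $\overline\NF_\ast\overline{\hm}_v = \overline{\hm}_{f(v)}$, then transferring back to $\cE$ via $P$ and $\NG$-extension. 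For (3), the $\overline H^{s,u}$-invariance of $\{\overline{\hm}_v\}$ together with the commutation of $H^{s,u}$ and $\NG$ yields the desired identities on each $\NG^k$-translate of $\Sigma$, and hence after summation on all of $\cE$.

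The main obstacle is that Theorem~\ref{t.asvmain} as stated requires a probability measure on a bundle with compact fibers, whereas $\cE_v\cong\RR$ and the $\hm_v$ are genuine Radon measures of infinite total mass. The remedy is the $\NG$-quotient $\overline\Sigma$, and the delicate point that legitimizes it is the verification that $\NG$ preserves the $\cF^s$- and $\cF^u$-foliations, so that the $su$-holonomies descend to $\overline\Sigma$.
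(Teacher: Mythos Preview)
Your approach is correct and essentially the same as the paper's: apply Theorem~\ref{t.asvmain} to obtain a continuous $su$-invariant disintegration of $\mS$ on the fundamental-domain fibers, then extend to all of $\cE$ by $\NG$-translation. The only difference is that you first pass to the compact-fiber quotient $\overline\Sigma=\cE/\NG$ (as the paper does in the nonvanishing-exponent case), whereas here the paper invokes Theorem~\ref{t.asvmain} directly on $\NF$ and $\mS$. One small correction: $\NG$ does not fix each $\cF^{s}$- or $\cF^{u}$-leaf---it sends the leaf through $(v,w)$ to the leaf through $(v,\tf(w))$---but it does permute these leaves and preserve $p_1$-fibers, which is exactly what is needed for $H^{s},H^{u}$ to commute with $\NG$ and hence descend to $\overline\Sigma$.
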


\begin{proof}
Note that $f$ satisfies the hypotheses of Theorem~\ref{t.asvmain}: it is partially hyperbolic,
volume-preserving, center bunched (since $E^c$ is $1$-dimensional) and accessible
(since $\varphi_1$ is stably accessible). As we have seen, the bundle $\cE$ admits $su$-holonomy,
and the probability measure $\mS$ on $\cE$ projects to the volume $m$ and is invariant under
the smooth cocycle $\NF$. We are in the case where $\chi^c= 0$, which by Lemma~\ref{l.exponentscoincide}
implies that the fiberwise Lyapunov exponent for $\NF$ vanishes $\mS$-almost everywhere.
Applying Theorem~\ref{t.asvmain}, we conclude that there is a continuous $\NF$-invariant
and $su$-holonomy invariant family of probability measures supported on the fibers of $\Sigma$
and agreeing $m$-almost everywhere with the disintegration $\{m_v \mid [v, f(v))\}$ of $\mS$.
Since $[v, f(v))$ is a fundamental domain for the action of $\NG$ on the fiber $\cE_v$,
we can extend this continuous family of probabilities to a continuous family of $\sigma$-finite
measures $\hm_v$ supported on the fibers of $\cE$. By construction, this family is
invariant under $\rho$ and under $su$-holonomy. Moreover, it agrees $m$-almost everywhere
with the family $\{ m_v\}$.
\end{proof}

The family of measures  $\{\hm_v: v\in M\}$  given by Lemma~\ref{l.jimmy} is a disintegration
of $\mE$ and shares some properties with the family $\{ m_v : v\in M\}$,
for example $\rho$-invariance. The family $\{\hm_v : v\in M\}$ has the extra properties of
continuity and invariance under $su$-holonomy.
On the other hand, the family  $\{ m_v : v\in M\}$ has one extra property that is
not \emph{a priori} enjoyed by $\{\hm_v: v\in M\}$: invariance under $c$-holonomy.
This reflects the fact that $\{ m_v : v\in M\}$ comes from a disintegration of $m$ along
local $\cW^c(f)$-leaves, and is not just an arbitrary disintegration of $\mE$ along $\cE$ fibers.

We can characterize whether $\{\hm_v: v\in M\}$ is invariant under $c$-holonomy by
looking at the supports $\supp \hm_v$ of these measures on the fibers.
To this end we show:

\begin{lemma}\label{l.suppdichotomy} Either
\begin{itemize}
\item[(i)] there exists $k\in \NN$ such that $\#\,\supp \hm_v \cap\Sigma = k$ for all $v\in M$
\item[(ii)] or $\supp \hm_v = \cE_v$ for all $v\in M$.
\end{itemize}
\end{lemma}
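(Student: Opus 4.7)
The plan is to combine the invariance of $\{\hm_v\}$ under $\rho$ and $su$-holonomy with the accessibility of $f$ to show that several natural conditions on $\supp \hm_v$ define $su$-saturated, $f$-invariant subsets of $M$; accessibility will then force each such subset to be either empty or all of $M$, yielding the dichotomy.

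The key technical ingredient is that the holonomies $H^s$ and $H^u$ on $\cE$ commute with $\NG$. On each fiber $\cE_v$, $\NG$ acts as $f$ on the center leaf $\cW^c(f)_v$; since $\cW^s(f)$ is $f$-invariant and each center leaf is setwise $f$-invariant (because $f(v)\in\cW^c(f)_v$), the stable holonomy $h^s_{v,v'}\colon \cW^c(f)_v \to \cW^c(f)_{v'}$ commutes with $f$, since both $h^s_{v,v'}(f(w))$ and $f(h^s_{v,v'}(w))$ are the unique point of $\cW^s_{f(w)} \cap \cW^c(f)_{v'}$. The same holds for $H^u$. An immediate consequence is that $H^s_{v,v'}$ sends $\Sigma \cap \cE_v = [v, f(v))$ onto $\Sigma \cap \cE_{v'} = [v', f(v'))$, and similarly for $H^u$.

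Using $(H^s_{v,v'})_\ast \hm_v = \hm_{v'}$ together with the preservation of $\Sigma$, the function $\phi(v) := \#(\supp \hm_v \cap \Sigma) \in \NN \cup \{\infty\}$ is $s$- and $u$-holonomy invariant; by $\NF$-invariance of $\Sigma$ and of the family $\{\hm_v\}$, it is also $f$-invariant. Hence for each $k \in \NN \cup \{\infty\}$ the level set $\{v : \phi(v) = k\}$ is $s$-saturated, $u$-saturated and $f$-invariant, and so by accessibility is either empty or all of $M$. Therefore $\phi \equiv k_0$ for a unique $k_0 \in \NN \cup \{\infty\}$. By the same reasoning, $A := \{v : \supp \hm_v = \cE_v\}$ is $su$-saturated and $f$-invariant, hence either empty or all of $M$.

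If $A = M$ conclusion (ii) holds. Otherwise $\supp \hm_v$ is a proper closed $\NG$-invariant subset of $\cE_v$ for every $v$, and one must rule out $k_0 = \infty$ to obtain (i). Passing to the compact circle fiber $\overline\cE_v := \cE_v/\NG$, the derived set $T(v)$ of accumulation points of $\supp \overline\hm_v$ is again closed and $su$- and $f$-equivariant; the same dichotomy forces $\{v : T(v) = \overline\cE_v\}$ to be empty, else $\supp \overline\hm_v \supseteq T(v) = \overline\cE_v$, contradicting $A = \emptyset$. Iterating the Cantor--Bendixson derivation and exploiting at each stage the weak-$*$ continuity of $v \mapsto \hm_v$ provided by Lemma~\ref{l.jimmy} to preclude a nontrivial $su$-invariant perfect proper subfiber, the process should terminate at $\emptyset$ in finitely many steps, whence $\supp \overline \hm_v$ is discrete and, being a closed subset of a compact space, finite. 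I expect this final termination step to be the main technical difficulty; all the preceding reductions are routine consequences of the invariance structure of $\{\hm_v\}$ combined with accessibility.
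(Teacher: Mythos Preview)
Your reductions via $su$-invariance and accessibility are correct as far as they go: $\phi(v)=\#(\supp\hm_v\cap\Sigma)$ is constant, and $A=\{v:\supp\hm_v=\cE_v\}$ is either empty or all of $M$. The gap is exactly where you place it: when $A=\emptyset$, nothing in your argument prevents $\supp\hm_v\cap\Sigma$ from being, say, a Cantor set. The Cantor--Bendixson idea does not close this. The $su$-holonomies only relate \emph{different} fibers; they impose no structure on $\supp\hm_v$ \emph{within a single fiber}, and weak-$*$ continuity of $v\mapsto\hm_v$ by itself is no obstruction to a continuously varying proper perfect support. There is no reason for the derived-set iteration to terminate, let alone in finitely many steps.

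The missing ingredient, and the heart of the paper's proof, is the homogeneity Lemma~\ref{l.homogeneity}: for each $v$ and each pair $\xi,\xi'\in\supp\hm_v$ there is an orientation-preserving self-homeomorphism of $\cE_v$ preserving $\hm_v$ (hence $\supp\hm_v$) and sending $\xi$ to $\xi'$. This uses a tool you did not invoke, the $c$-holonomy $H^c_{w,w'}:\cE_w\to\cE_{w'}$, combined with the almost-everywhere identity $\hm_w=m_v=\hm_v$ for $w\in\supp m_v$. Concretely, if $w=p_2(\xi)$ and $w'=p_2(\xi')$, one lifts an $su$-path in $M$ from $w$ to $w'$ to an $su$-holonomy $\cE_w\to\cE_{w'}$ taking $(w,w)$ to $(w',w')$ and $\hm_w$ to $\hm_{w'}$, and then conjugates by $c$-holonomy back to $\cE_v$. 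Once $\supp\hm_v$ is homogeneous under orientation-preserving ambient homeomorphisms of the line $\cE_v$, the dichotomy is immediate: either some (hence every) point of the support is a bilateral accumulation point, forcing $\supp\hm_v=\cE_v$ (a gap would have an endpoint that is not bilaterally accumulated), or no point is, in which case every point is isolated and $\supp\hm_v\cap[v,f(v))$ is finite; ergodicity and continuity then give the constant $k$.
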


The key ingredient in the proof of Lemma~\ref{l.suppdichotomy} is the following lemma, which shows that the measures $\hm_v$ have a strong homogeneity property under holonomy maps.

\begin{lemma}\label{l.homogeneity}  For any $v\in M$ and  for any $\xi,\xi'\in \supp\hm_v$,
there is an orientation-preserving $C^1$
diffeomorphism $H_{\xi,\xi'}\colon \cE_v\to \cE_v$ (a composition of $s,u$ and $c$ holonomies in $\cE$) with the following properties:
\begin{enumerate}
\item $H_{\xi,\xi'}(\xi) = \xi' $;
\item $(H_{\xi,\xi'})_\ast \hm_v = \hm_v$;
\item if $\xi,\xi'\in \supp\hm_v$, then $H_{\xi,\xi'}(\supp \hm_v) = \supp \hm_v$;
\item if $f$ is $r$-bunched, then $H_{\xi,\xi'}$ is a $C^r$ diffeomorphism.
\end{enumerate}
\end{lemma}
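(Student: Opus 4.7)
The plan is to construct $H_{\xi,\xi'}$ by sandwiching an accessibility path in the base $M$ between two $c$-holonomies in $\cE$. Writing $\xi=(v,w)$ and $\xi'=(v,w')$ with $w,w'\in\cW^c(f)_v$, I will use accessibility of $f$ to pick an $su$-path $\gamma$ in $M$ from $w$ to $w'$, and let $\psi_\gamma\colon \cE_w\to\cE_{w'}$ denote the corresponding composition of fiber-bundle $s$- and $u$-holonomies of the type \eqref{eq.holonomy1}. I then define
$$
H_{\xi,\xi'} := H^c_{w',v}\circ\psi_\gamma\circ H^c_{v,w}\colon \cE_v\to\cE_v,
$$
which is manifestly a composition of $s$-, $u$-, and $c$-holonomies.

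To verify (1), I trace the diagonal section $u\mapsto(u,u)$ of $\cE$. Each elementary holonomy $H^{\ast}_{v_0,v_1}$ with $\ast\in\{s,u\}$ and $v_1\in\cW^{\ast}(f)_{v_0}$ satisfies $H^{\ast}_{v_0,v_1}(v_0,v_0)=(v_1,v_1)$, since locally $\cW^{\ast}(f)_{v_0}\cap\cW^c(f)_{v_1}=\{v_1\}$. Iterating along $\gamma$ gives $\psi_\gamma(w,w)=(w',w')$, so $H_{\xi,\xi'}(\xi)=H^c_{w',v}(w',w')=(v,w')=\xi'$. For (2), Lemma~\ref{l.jimmy}(3) gives $(\psi_\gamma)_\ast\hm_w=\hm_{w'}$, so the entire argument reduces to the subclaim
$$
(H^c_{v,u})_\ast\hm_v=\hm_u \quad\text{for every } u\in\cW^c(f)_v.
$$
The starting input is the factorization $\NF=H^c_{v,f(v)}\circ\NG$ on $\cE_v$, which, together with the $\rho$-invariance in Lemma~\ref{l.jimmy}(2), yields $(H^c_{v,f^n(v)})_\ast\hm_v=\hm_{f^n(v)}$ for all $n\in\ZZ$.

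The main obstacle is promoting this $c$-invariance from the discrete $f$-orbit $\{f^n(v)\}$ to all of $\cW^c(f)_v$, since the orbit is not dense in the center leaf. My plan is to combine the a.e.\ identification $\hm=m$ of Lemma~\ref{l.jimmy}(1) with the $c$-holonomy invariance of $m$ given by property \eqref{eq.normalization6}: on the full-measure set $S=\{v:\hm_v=m_v\}$ the subclaim holds pairwise, and I will upgrade this a.e.\ equality to a pointwise identity using the continuity of $\hm$ together with the $su$-invariance and $\NG$-invariance forced by Lemma~\ref{l.jimmy}(2)--(3). Once the subclaim is in hand, (3) is immediate from (2). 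Finally for (4), under $r$-bunching the fiberwise $s$- and $u$-holonomies on $\cE$ are $C^r$ by Pugh-Shub-Wilkinson \cite{PSW97}, $H^c$ is smooth along fibers since it fixes the $p_2$-coordinate, and every basic holonomy is $C^1$-close to an isometry for $f$ near $\varphi_1$, so the composition $H_{\xi,\xi'}$ is orientation-preserving.
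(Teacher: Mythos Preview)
Your construction $H_{\xi,\xi'}=H^c_{w',v}\circ\psi_\gamma\circ H^c_{v,w}$ and the verification of (1) are exactly the paper's. The gap is in (2): your ``subclaim'' that $(H^c_{v,u})_\ast\hm_v=\hm_u$ for \emph{every} $u\in\cW^c(f)_v$ is in general \emph{false}. The paper itself proves this right after Lemma~\ref{l.suppdichotomy}: in the atomic case the family $\{\hm_v\}$ is \emph{not} $c$-holonomy invariant. So any argument that purports to establish full $c$-invariance at this stage must be wrong.

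Concretely, your proposed upgrade from the a.e.\ identity $\hm=m$ to pointwise $c$-invariance fails because continuity of $v\mapsto\hm_v$ only lets you take limits in the base point; to get $(H^c_{v,u})_\ast\hm_v=\hm_u$ from the a.e.\ statement you would need the set $S=\{\hm=m\}$ to be dense in the center leaf through $v$. But $S$ is only known to have full $m_v$-measure in that leaf, and in the atomic case $m_v$ is carried by finitely many $f$-orbits, so full $m_v$-measure is far from dense. Your $\NF=H^c_{v,f(v)}\circ\NG$ observation is correct but only gives $c$-invariance along the $f$-orbit of $v$, which is equally sparse.

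The paper's way out is to use the hypothesis $\xi,\xi'\in\supp\hm_v$ rather than discard it. For $v\in S$ one has $\supp\hm_v=\supp m_v$, so $w=p_2(\xi)$ and $w'=p_2(\xi')$ lie in $\supp m_v$. Now a full $m_v$-measure subset of the leaf \emph{is} dense in $\supp m_v$, so continuity of $\hm$ gives $\hm_w=m_v=\hm_v$ and $\hm_{w'}=m_v=\hm_v$; equivalently $(H^c_{v,w})_\ast\hm_v=\hm_w$ and $(H^c_{w',v})_\ast\hm_{w'}=\hm_v$ for these particular $w,w'$, which is all (2) needs. The remaining $v\notin S$ are handled by conjugating with an $su$-holonomy to some $v_0\in S$.
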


\begin{proof}[Proof of Lemma~\ref{l.homogeneity}]
Note that $\hm_w = m_v$ for every $w \in \supp m_v$ and almost every $v$, because $\hm_v = m_v$
almost everywhere, $\hm_v$ is continuous in $v$, and $m_v$ is constant on every center leaf.

Let $w, w'$ be the $p_2$-projections of $\xi,\xi'$. By accessibility of $f$, there is an $su$-path $\gamma$ in $M$ connecting $w$ to $w'$.
Since $p_1$ maps leaves of $\cF^*$ homeomorphically to leaves of $\cW^*(f)$, for $*\in\{s, u\}$,
we can lift $\gamma$ to an $su$-path in $\cE$ connecting $\eta=(w,w)$ to $\eta'=(w',w')$.
Let $H:\cE_w \to \cE_{w'}$ be the $su$-holonomy map along this $su$-path. Then $H$ sends
$\eta$
to $\eta'$ and, since the disintegration $\{\hm_u: u\in M\}$ is invariant under $su$-holonomy,
it maps $\hm_w$ to $\hm_{w'}$.

Suppose first that $v\in\supp m_v$ (this holds
$m$-almost everywhere). Then the condition $\xi\in\supp\hm_v$ means that $w\in\supp m_v$,
which implies $\hm_w = m_v = \hm_v$. Analogously, $w'\in\supp m_v$ and $\hm_{w'} = m_v = \hm_v$.  Identifying the fibers $\cE_w, \cE_{w'}$ to $\cE_v$ through
$c$-holonomy in $\cE$, we obtain a homeomorphism $H_{\xi,\xi'}:\cE_v \to \cE_v$ satisfying properties (1)-(3).

The assumption on $v$ is readily removed, as follows. Given any $v\in M$ let $v_0$ be any point
such that $v_0 \in \supp m_{v_0}$ and let $\gamma$ be an $su$-path in $M$ connecting $v$ to
$v_0$. The $su$-holonomy $H_0:\cE_v \to \cE_{v_0}$ along the lift of $\gamma$ maps
$\supp \hm_v$ to $\supp \hm_{v_0}$. Let $\xi_0, \xi_0'$ be the
images of $\xi, \xi'$ under $H_0$.
Conjugating $H_{\xi_0,\xi_0'}$ by $H_0$ we obtain a homeomorphism
$H_{\xi,\xi'}$ satisfying conclusions (1)-(3).

Since $\tf$ is partially hyperbolic with $1$-dimensional center it is center bunched, and so the (globally defined) $su$-holonomy maps between $\cW^c(\tf)$ leaves are $C^1$.  This implies that $H_{\xi,\xi'}$ is a $C^1$ diffeomorphism.  Moreover, if $f$ is $r$-bunched, then so is $\tf$, and the
leaves of $\cW^c(\tf)$ and all holonomies are $C^r$; in this case  $H_{\xi,\xi'}$ is a $C^r$ diffeomorphism, verifying property (4).
\end{proof}

\begin{proof}[Proof of Lemma~\ref{l.suppdichotomy}]
The support of each $\hm_v$ is a locally
compact subset of the fiber $\cE_v$. If $\supp \hm_v$ has bilateral accumulation
points for some (and hence all) $v\in M$, then Lemma~\ref{l.homogeneity} implies that $\supp \hm_v = \cE_v$: otherwise one would have an interval in the complement of $\supp \hm_v$ whose boundary points
would fail to be bilateral accumulation points. This means that conclusion (ii) holds.

If $\supp \hm_v$ has no bilateral accumulation
points then it is countable; since it is locally compact, it therefore
contains (and hence consists of) isolated points.
Hence, the support of every $\hm_v \mid [v,f(v))$ is finite.
But $\mS = \left(\hm_v \mid [v,f(v))\right) \, dm(v)$ is $\NF$-invariant, and so
$\# \, \supp(\hm_v \mid [v,f(v)) ) $ is an $f$-invariant positive measurable function.
By ergodicity of $f$, there exists $k\ge 1$ such that $\#\,\supp(\hm_v \mid [v,f(v)) ) = k$
for $m$-almost all $v$. Conclusion (i) follows, using the continuity of $\hm_v$.
\end{proof}

We call alternative (i) of Lemma~\ref{l.suppdichotomy} the atomic case, and alternative (ii)
the continuous case. Let us consider the atomic case first.
Then, for every $v$ in some full $m$-measure subset,
$\supp m_v\mid [v,f(v)) = \supp \hm_v \mid[v,f(v))$ consists of exactly $k$ points.
Since $[v,f(v))$ is a fundamental domain for the action of $f$ on $\cW^c(f)_v$, assuming
the center leaf is non-compact, it follows that the support of $m_v$ consists of exactly
$k$ orbits, for every $v$ in some full $m$-measure Borel set $M_0\subset M$.  We
may further assume that $\hm_v=m_v$ for $v \in M_0$.  Taking the unions
of the supports
$$
\bigcup_{v\in M_0} \supp m_v = p_2\left( \bigcup_{v\in M_0} \supp \hm_v\right)
$$
we obtain a full measure set\footnote{Note that $\bigcup_{v \in M_0} \supp
\hm_v$ is a Borel subset (by continuity of $\hm_v$), hence its image under
$p_2$ is analytic, and hence Lebesgue measurable.} meeting almost every $\cW^c(f)$-leaf in exactly $k$ orbits of $f$.
Then there exists a full measure set such that this happens for every center leaf, as claimed
in alternative (1) of Theorem A.

Finally, observe that in the atomic case the family $\{\hm_v: v\in M\}$ is not invariant under
$c$-holonomy. Indeed, consider any $v$ such that $v\in \supp m_v$ and let
$w\in\cW^c(f)_v \setminus \supp m_v$. Then $v\in\supp\hm_v$ and, using accessibility,
$w\in \supp\hm_w$. The latter implies that $\hm_w \neq m_v = \hm_v$.

\section{The continuous case}

One case remains in the proof of Theorem A, in which the fiberwise exponent $\chi^c$ vanishes
and the family of measures $\{\hm_v: v\in M\}$ has full support on the fibers of $\cE$.
We shall see that in this case the family $\{\hm_v: v\in M\}$ is invariant under
$c$-holonomy and can be used to define a continuous disintegration of volume along center leaves.
The existence of this disintegration leads to alternative (2) in Theorem A:
the existence of a smooth vector field in which $f$ embeds. Then the center foliation is
smooth and, in particular, has Lebesgue disintegration.  The first step is to establish $c$-invariance of the measures $\{\hm_v : v\in M\}$.

\begin{lemma}
The family $\{\hm_v : v\in M\}$ is invariant under $c$-holonomy of $\NF:\cE\to\cE$.
%Consequently, the family $\{\om_v : v\in \tM\}$ is constant on $\cW^c(\tf)$-leaves,
%and so it is a disintegration of volume along leaves of $\cW^c(\tf)$.
\end{lemma}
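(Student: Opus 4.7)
My plan is to combine three features of $\{\hm_v\}$ --- weak-$*$ continuity, coincidence with $\{m_v\}$ on the full $m$-measure set $N = \{v : \hm_v = m_v\}$, and the continuous-case hypothesis $\supp \hm_v = \cE_v$ for every $v$ --- with the everywhere $c$-invariance of $\{m_v\}$ recorded in \eqref{eq.normalization6}. The aim is to upgrade the a.e.\ identity $\hm_v = m_v$ to enough pointwise information along each center leaf so that the $c$-invariance of $m$ transfers to $\hm$ by continuity.

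The first step is a density statement. By Fubini applied to the disintegration of $m$ along $\cW^c(f)$, for $\hat m$-almost every center leaf $L$ the conditional class $\cm_L$ assigns $N$ full measure; call such $L$ \emph{good}. For a good leaf $L$ and any $v \in L \cap N$ we have $\supp m_v = \supp \hm_v = \cE_v$, so (via the identification $p_2\colon\cE_v \to L$) the conditional class has full topological support in $L$. Combined with $N \cap L$ having full conditional measure, this forces $N \cap L$ to be \emph{topologically dense} in $L$. The union of good leaves has full $m$-measure and hence, since Liouville measure is positive on open sets, is dense in $M$.

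Next I would prove a local version of the conclusion: if $v_0 \in M$ and $v'_0 \in \cW^c(f)_{v_0}$ are sufficiently close, then $(H^c_{v_0, v'_0})_\ast \hm_{v_0} = \hm_{v'_0}$. Fix a foliation box $\cB$ for $\cW^c(f)$ containing $v_0$ and $v'_0$, and choose $v_n \to v_0$ with $v_n \in N$ lying on good leaves $L_n$; the local product structure of $\cB$ provides natural nearby points $v'_n \in L_n \cap \cB$ with $v'_n \to v'_0$, and the density statement just proved lets us further perturb each $v'_n$ inside $L_n$ to obtain $\bar v'_n \in N \cap L_n$ with $\bar v'_n \to v'_0$. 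For each $n$, $\hm_{v_n} = m_{v_n}$ and $\hm_{\bar v'_n} = m_{\bar v'_n}$, so the everywhere $c$-invariance of $\{m_v\}$ from \eqref{eq.normalization6} yields $(H^c_{v_n, \bar v'_n})_\ast \hm_{v_n} = \hm_{\bar v'_n}$. Passing to the limit, the weak-$*$ continuity of $v\mapsto\hm_v$ together with the continuity of the $c$-holonomies in their endpoints --- read off from a local trivialization of the continuous fiber bundle $\cE \to M$ --- combine to produce weak-$*$ convergence of the pushforwards and give the desired local identity.

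Finally, I would globalize: for arbitrary $v, w$ in a common center leaf, cover a path from $v$ to $w$ in the leaf by finitely many foliation boxes and compose the local identities via the cocycle relation $H^c_{v,w} = H^c_{v', w} \circ H^c_{v, v'}$. The main obstacle is the density of $N$ along good leaves; an almost-everywhere statement alone is not enough to deduce topological density (indeed the analogous conclusion fails in the atomic case of Lemma~\ref{l.suppdichotomy}), so the argument relies crucially on the continuous-case assumption $\supp \hm_v = \cE_v$. A secondary technical point --- weak-$*$ continuity of the pushforward $(v, v', \mu) \mapsto (H^c_{v,v'})_\ast \mu$ --- is handled by working in local trivializations of $\cE$ in which the $c$-holonomies become a continuous family of homeomorphisms of a fixed model fiber.
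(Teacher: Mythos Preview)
Your proof is correct and follows essentially the same approach as the paper's. Both arguments hinge on the same three ingredients: (i) the continuous-case hypothesis $\supp \hm_v = \cE_v$, which forces full-conditional-measure sets to be dense in the leaf; (ii) the almost-everywhere identity $\hm_v = m_v$ combined with the everywhere $c$-invariance of $\{m_v\}$ from \eqref{eq.normalization6}; and (iii) the weak-$*$ continuity of $v \mapsto \hm_v$ to pass from a dense set of pairs $(v,v')$ to all pairs. The paper compresses this into a few sentences, whereas you spell out the Fubini step, the density of $N$ along good leaves, and the local-to-global argument explicitly --- but the logical structure is the same.
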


\begin{proof}
The fact that $\supp \hm_v = \cE_v$ for every $v$ implies that every set of
full $\hm$-measure must be dense in almost every fiber.
Recall that $\hm_v = m_v$ almost everywhere and
$\{m_v : v\in M\}$ is invariant under $c$-holonomy of $\cE$. This implies that
$\{\hm_v : v\in M\}$ is invariant under $c$-holonomy restricted to a dense set of
points in a dense set of center leaves. Since the family  $\{\hm_v : v\in M\}$ is
continuous, it follows that is invariant under $c$-holonomy on the whole of $\cE$.
\end{proof}

\subsection{Absolute continuity of $\cW^c(f)$}

For $v\in M$, denote by $\lambda_v$ the Riemannian measure on the fiber
and denote by $I(\xi,r)$  the interval in $\cE_v$ centered at $\xi$ of radius $r$,
with respect to the $p_2$-pullback metric of the Riemann structure on $\cW^c(\tf)_v$.

\begin{lemma}\label{l.ac1} For each $v\in M$, the measure $\hm_v$ is equivalent to
Lebesgue measure $\lambda_v$.  The limit
$$
\Delta_v(\xi) =  \lim_{r\to 0}\frac{\hm_v(I(\xi,r))}{\lambda_v(I(\xi,r))}
$$
exists everywhere, is continuous, and takes values in $(0,\infty)$.
\end{lemma}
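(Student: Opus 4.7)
The plan is to exploit Lemma~\ref{l.homogeneity}: in the continuous case one has $\supp\hm_v=\cE_v$, so for any $\xi_0,\xi\in\cE_v$ there is an orientation-preserving $C^1$ diffeomorphism $H=H_{\xi_0,\xi}\colon\cE_v\to\cE_v$ with $H(\xi_0)=\xi$ and $H_\ast\hm_v=\hm_v$. Thus $\hm_v$ is a full-support Radon measure on the $1$-manifold $\cE_v$ that is invariant under a transitive group of $C^1$ self-diffeomorphisms, and the conclusion should follow from this homogeneity. Concretely, since $H$ is $C^1$ and sends $\xi_0$ to $\xi$, the preimage $H^{-1}(I(\xi,r))$ is an interval essentially centered at $\xi_0$ of length $2r/|DH(\xi_0)|+o(r)$; combined with the invariance $\hm_v(I(\xi,r))=\hm_v(H^{-1}(I(\xi,r)))$, this yields the transfer formula
$$
\Delta_v(\xi) \;=\; \frac{\Delta_v(\xi_0)}{|DH_{\xi_0,\xi}(\xi_0)|}
$$
whenever the limit defining $\Delta_v(\xi_0)$ exists in $[0,\infty]$. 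In particular, existence of the limit at a single $\xi_0$ propagates to every $\xi\in\cE_v$.

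To produce such a base point I would apply the Lebesgue differentiation theorem for the Radon measure $\hm_v$, which supplies $\xi_0$ with $\Delta_v(\xi_0)\in[0,\infty)$. The degenerate value $\Delta_v(\xi_0)=0$ must be ruled out: otherwise the transfer formula forces $\Delta_v\equiv 0$ on $\cE_v$, making $\hm_v$ purely singular with respect to $\lambda_v$; but any nonzero singular Radon measure on $\RR$ has upper Lebesgue density $+\infty$ on a set of positive mass (the classical density theorem), which is incompatible with the limit being identically zero. Since $\hm_v\neq 0$ in the continuous case, we conclude $\Delta_v(\xi_0)\in(0,\infty)$, and the transfer formula then gives $\Delta_v(\xi)\in(0,\infty)$ at every $\xi$. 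The same density-theorem dichotomy also excludes any singular component of $\hm_v$ once the limit is finite everywhere, so $\hm_v=\Delta_v\,d\lambda_v$ and is equivalent to $\lambda_v$.

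Continuity of $\Delta_v$ reduces, via the transfer formula, to continuity of the map $\xi\mapsto|DH_{\xi_0,\xi}(\xi_0)|$. This is the main obstacle, since $H_{\xi_0,\xi}$ was constructed in Lemma~\ref{l.homogeneity} by choosing an $su$-path from $p_2(\xi_0)$ to $p_2(\xi)$ in $M$, and such paths are not canonical. The resolution is local: for $\xi$ near $\xi_0$ one can assemble $H_{\xi_0,\xi}$ from short $s$- and $u$-holonomies on $\cE$ composed with $c$-holonomy on $\cE$ (using the $c$-invariance of $\{\hm_v\}$ just established in the continuous case). Each of these pieces depends continuously on its endpoints in the $C^1$ topology, which is a manifestation of center bunching and the $r$-bunching estimates recalled in Section~\ref{ss.ph}. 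Hence $|DH_{\xi_0,\xi}(\xi_0)|$ is continuous in $\xi$, and so is $\Delta_v$.
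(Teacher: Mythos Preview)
Your argument for existence of the limit and for $\Delta_v\in(0,\infty)$ is essentially the paper's: both rest on the transfer of densities under the $\hm_v$-preserving $C^1$ diffeomorphisms supplied by Lemma~\ref{l.homogeneity}. The paper phrases it via $\limsup/\liminf$ and the dichotomy $\Delta_v=\infty$ everywhere or nowhere, while you pick a Lebesgue-differentiation base point and propagate; the content is the same.

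The continuity step is where the routes separate. You reduce to showing that $\xi\mapsto H_{\xi_0,\xi}$ can be chosen continuously in $C^1$, and propose to build $H_{\xi_0,\xi}$ for $\xi$ near $\xi_0$ out of short $s$-, $u$-, and $c$-holonomies. This can be carried out, but it is more than a formality: since $\xi_0$ and $\xi$ lie in the \emph{same} fiber, any such $H$ must come from an $su$-loop in $M$ (closed up by a $c$-leg), and achieving a prescribed small center displacement $w_0\mapsto w$ by a short $su$-loop is exactly a local accessibility/non-integrability statement about $E^s\oplus E^u$. That holds for these systems, but it is additional input you are tacitly invoking. The paper sidesteps this obstacle with a Baire-category trick: $\Delta_v$ is the pointwise limit of the continuous functions $\xi\mapsto \hm_v(I(\xi,r))/\lambda_v(I(\xi,r))$, hence Baire class~1, hence continuous at \emph{some} $\xi_0$; then a \emph{single fixed} $H=H_{\xi_0,\xi}$ (no continuous family required) transports continuity to an arbitrary $\xi$ via the identity $\Delta_v\circ H=\Delta_v/\Jac(H)$. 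This is shorter and uses nothing beyond Lemma~\ref{l.homogeneity} as stated.
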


\begin{proof}  For $v\in M$ and  $\xi\in \cE_v$ let
$$
\overline\Delta_v(\xi) = \limsup_{r\to 0}\frac{\hm_v(I(\xi,r))}{\lambda_v(I(\xi,r))},\qquad
\underline\Delta_v(\xi) = \liminf_{r\to 0}\frac{\hm_v(I(\xi,r))}{\lambda_v(I(\xi,r))}.
$$
For $\hm_v$-almost every $\xi\in \cE_v$, we have
$$
\overline\Delta_v(\xi)=\underline\Delta_v(\xi) \in (0,\infty].
$$
Since $\supp\hm_v = \cE_v$,  Lemma~\ref{l.homogeneity} implies that for any two points
$\xi, \xi' \in \cE_v$, there is a diffeomorphism $H_{\xi,\xi'}\colon \cE_v\to \cE_v$ preserving
$\hm_v$ and sending $\xi$ to $\xi'$. Since $C^1$ diffeomorphisms have continuous and positive Jacobians,
it follows that for any $\xi,\xi'$:
$$
\underline\Delta_v(\xi) = \overline\Delta_v(\xi)  \quad\iff\quad  \underline\Delta_v(\xi') = \overline\Delta_v(\xi').
$$
Thus $ \underline\Delta_v= \overline\Delta_v$ everywhere on $\cE_v$; denote this function by $\Delta_v$.

Then $\hm_v$ has a singular part with respect to $\lambda_v$ if and only if there is a positive
$\hm_v$-measure set $X \subset \cE_v$ such that, for $\xi\in X$, $\Delta_v(\xi) = \infty $. On the other hand,
again using the diffeomorphisms $H_{\xi,\xi'}$ we see that for every $\xi,\xi'$:
$$
\Delta_v(\xi) = \infty \quad\iff\quad  \Delta_v(\xi') = \infty .
$$
Hence if $\hm_v$ had a singular part with respect to $\lambda_v$, this would imply that $\Delta_v\equiv\infty$ on $\cE_v$,
contradicting the local finiteness of $\hm_v$.  Therefore $\hm_v$ is absolutely continuous with respect to $\lambda_v$.
Similarly, we see that $\lambda_v$ is absolutely continuous with respect to $\hm_v$, and so the two measures are equivalent.

The function $\Delta$ is a pointwise limit of the continuous functions
$$
\xi\mapsto \frac{\hm_v(I(\xi,r))}{\lambda_v(I(\xi,r))}
$$
and hence is a Baire class 1 function;  it follows that $\Delta$ has a point of continuity \cite[Theorem 7.3]{Ox80}.
Again using Lemma~\ref{l.homogeneity}, we see that every point in $\cE$ is a point of  continuity
of $\Delta$, and so $\Delta$ is continuous.
\end{proof}

Recall that for almost every $v\in M$, we have  $\hm_v = m_v$, where $m_v$ is a representative  of the disintegration
of volume on the (noncompact) leaf $\cW^c(f)_v$.  The previous lemma thus implies  that $m_v$ is equivalent to
Lebesgue measure on $\cW^c(f)_v$, for almost every $v$. We conclude:
\begin{lemma}
$\cW^c(f)$  is leafwise absolutely continuous.
\end{lemma}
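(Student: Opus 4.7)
The plan is to read off the statement directly from Lemma~\ref{l.ac1} via the correspondence between the fiber bundle $\cE$ and the center foliation that has been built up throughout this section. For all but countably many $v \in M$ (in particular, for $m$-almost every $v$) the center leaf $\cW^c(f)_v$ is noncompact, in which case $p_2$ restricts to a homeomorphism $\cE_v \to \cW^c(f)_v$. Moreover, the fiberwise Riemannian structure used in Lemma~\ref{l.ac1} was \emph{defined} as the pullback under $p_2$ of the Riemannian structure on the corresponding lifted center leaf, so this homeomorphism identifies the Lebesgue measure $\lambda_v$ on $\cE_v$ with the Riemannian length measure on $\cW^c(f)_v$.

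Now Lemma~\ref{l.ac1} asserts that $\hm_v$ and $\lambda_v$ are equivalent as measures on $\cE_v$, with continuous positive density $\Delta_v$. Combining this with the almost-everywhere identification $\hm_v = m_v$ from Lemma~\ref{l.jimmy}(1) and transporting via $p_2$, we conclude that for $m$-almost every $v$ the conditional class $\cm_v$ of the disintegration of volume along $\cW^c(f)$ admits a representative equivalent to the Riemannian length measure on the leaf. By the notion introduced in Section~\ref{ss.absolutecontinuity}, this is exactly the statement that volume has Lebesgue disintegration along $\cW^c(f)$, i.e.\ that $\cW^c(f)$ is leafwise absolutely continuous.

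There is essentially no substantive obstacle to overcome here: the exceptional set of $v$ with $\cW^c(f)_v$ compact is countable and hence $m$-null, and everything else is a matter of bookkeeping under the identifications $\cE_v \leftrightarrow \cW^c(f)_v$ and $\lambda_v \leftrightarrow \lambda_{\cW^c(f)_v}$ already set up. I note in passing that the continuity and strict positivity of the density $\Delta_v$ provided by Lemma~\ref{l.ac1} will presumably be needed in the next step, where one wants to upgrade leafwise absolute continuity to the existence of a smooth flow in which $f$ embeds; but that lies beyond the present lemma.
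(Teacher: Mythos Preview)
Your proposal is correct and follows exactly the paper's approach: the paper simply observes (in the sentence immediately preceding the lemma) that since $\hm_v = m_v$ for almost every $v$ and Lemma~\ref{l.ac1} gives $\hm_v$ equivalent to $\lambda_v$, the conditional measures $m_v$ are equivalent to Lebesgue on the center leaves. Your write-up is a slightly more detailed version of the same argument, including the bookkeeping about $p_2$ and the countably many compact leaves, which the paper leaves implicit.
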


\subsection{Embedding $\tf$ in a continuous flow}

Consider the continuous vector field  $Z$ on $\cE$ given by
$$
Z(\xi) =  \frac{Z_0(\xi)}{\Delta(\xi)},
$$
where $Z_0$ is the positively oriented unit speed vector field tangent to the fibers of $\cE$
(with respect to the $p_2$-pullback metric).
Since $\Delta_v = d\hm_v /d\lambda_v$, it follows that $Z$
generates a flow $\phi_t$ on $\cE$  satisfying
$$
\hm_v\left([\xi, \phi_t(\xi))\right) = t,
$$
for all  $v\in M$, $\xi\in \cE_v$ and $t\in\RR$.    Since
$\hm_v[\xi, \NG(\xi)) = 1$, it follows  that $\phi_1(\xi) = \NG(\xi)$, for all $\xi$.

The invariance properties of  $\hm_v$ translate into invariance properties of the flow:
\begin{itemize}
\item $\phi_t$ commutes with the $\rho$-action on $\cE$;
\item $\phi_t$ commutes with $u, s$ and $c$ holonomy.
\end{itemize}
The analogous properties holds for the vector field $Z$; in particular:
\begin{itemize}
\item $Z$ is preserved by the $\rho$-action on $\cE$:
\item $Z$ is preserved by  $u, s$ and $c$ holonomy.
\end{itemize}
The $c$-invariance of $Z$ implies that $Z$ projects under $p_2$ to a well-defined continuous vector field $X$
on $M$ tangent to the leaves of $\cW^c(f)$.  The $\NG$-invariance of $Z$ implies that $f_\ast X= X$.
Let $\psi_t$ be the flow generated by $X$; it satisfies $\phi_t\circ p_2
= p_2\circ \psi_t$ for all $t$.  Since $\phi_1 = \NG$ and $p_2\circ \NG = f \circ p_2$,
we have that $\psi_1 = f$; in other words, $f$ embeds in the flow $\psi_t$.

\begin{lemma} The flow $\psi$ preserves the volume $m$.
\end{lemma}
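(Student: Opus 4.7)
The plan is to prove that $\nu_t := (\psi_t)_\ast m$ agrees with $m$ by verifying that $\nu_t$ is an $f$-invariant probability measure absolutely continuous with respect to $m$, and then invoking the ergodicity of $f$ with respect to $m$ (Theorem~\ref{t.BW}) to conclude that the Radon--Nikodym derivative $d\nu_t/dm$ is constant, hence equal to $1$. The $f$-invariance is immediate: since $\{\psi_s\}$ is a one-parameter flow with $\psi_1 = f$, the maps $\psi_t$ and $f$ commute, so $f_\ast\nu_t = (\psi_t)_\ast f_\ast m = \nu_t$.

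The core of the argument is absolute continuity of $\nu_t$ with respect to $m$. First I would observe that $\phi_t$ preserves $\hm_v$ on each fiber $\cE_v$: by construction $Z = Z_0/\Delta$ was tuned precisely so that $\hm_v\big([\xi,\phi_t(\xi))\big) = t$, which says that $\phi_t$ is a translation in $\hm_v$-arc-length parametrization and hence $\hm_v$-preserving. For the full $m$-measure set of $v$ whose center leaf is non-compact, $p_2\colon \cE_v\to \cW^c(f)_v$ is a homeomorphism intertwining $\phi_t$ with $\psi_t|_{\cW^c(f)_v}$ and identifying $\hm_v$ with the conditional measure $m_v$ of $m$ along the leaf; so $\psi_t$ preserves $m_v$ on $\cW^c(f)_v$ for $m$-almost every $v$. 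Combined with the preceding lemma, which asserts that $\cW^c(f)$ is leafwise absolutely continuous (so that $m(A)=0$ iff $m_v(A\cap \cW^c(f)_v)=0$ for $m$-a.e.\ $v$), this yields the desired absolute continuity: if $m(A)=0$, then $m_v(A\cap \cW^c(f)_v)=0$ for a.e.\ $v$, and since $\psi_t$ preserves leaves and the leafwise measures,
\[
m_v\big(\psi_t^{-1}(A)\cap \cW^c(f)_v\big) = m_v\big(\psi_t^{-1}(A\cap \cW^c(f)_v)\big) = m_v(A\cap \cW^c(f)_v) = 0
\]
for a.e.\ $v$, whence $\nu_t(A) = m(\psi_t^{-1}(A)) = 0$ again by leafwise absolute continuity.

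The main delicate point is the bookkeeping of the various ``$m$-almost every $v$'' clauses: $\hm_v = m_v$ holds on one full-measure set, the leafwise absolute continuity characterization of $m$ uses another, and $p_2|_{\cE_v}$ is a homeomorphism only off the countable (hence $m$-null) set of $v$ with compact center leaf. Each of these exceptional sets is $m$-null, so they can be intersected harmlessly and no genuine obstruction arises. Once absolute continuity and $f$-invariance of $\nu_t$ are in hand, the Radon--Nikodym derivative $d\nu_t/dm$ is $f$-invariant and hence constant by ergodicity; the normalization $\nu_t(M) = m(M) = 1$ forces this constant to be $1$, and so $\nu_t = m$.
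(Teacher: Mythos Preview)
Your proposal is correct and follows essentially the same approach as the paper: establish that $(\psi_t)_\ast m$ is absolutely continuous with respect to $m$, then use the commutation $\psi_t\circ f = f\circ\psi_t$ together with ergodicity of $f$ to conclude that the Radon--Nikodym derivative is constant and equal to $1$. The only minor difference is in the absolute continuity step---the paper simply invokes that $\psi_t$ is $C^1$ along center leaves (hence preserves the leafwise measure class), whereas you give the sharper observation that $\psi_t$ actually preserves the leafwise conditional measures $m_v$ themselves; both justifications are valid and rest on the previously established leafwise absolute continuity of $\cW^c(f)$.
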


\begin{proof} Fix $t\in \RR$.  Since $\cW^c(f)$ is leafwise absolutely continuous and $\psi_t$ is $C^1$
along the leaves of $\cW^c(f)$, the map $\psi_t$ preserves the measure class of $m$.
Hence $\psi_t$ has a Jacobian with respect to volume:
$$
\Jac(\psi_t) = \frac{d\left((\psi_t)^\ast m\right)}{dm}.
$$
Since $\psi_t \circ f = f\circ \psi_t$, it follows that $\Jac\psi_t(f(t)) = \Jac(\psi_t)$.
Ergodicity of $f$ implies that $\Jac(\psi_t)$ is almost everywhere constant, and hence almost everywhere equal to $1$.
This immediately implies that $(\psi_t)_\ast m =m$.
\end{proof}

\subsection{Showing that the flow is smooth}

Let $\psi$ be the volume-preserving flow on $M$ satisfying
$f = \psi_1$ that we have just constructed. Our remaining task is to prove that the flow $\psi$ is $C^\infty$.
This is accomplished in two steps.  In the first step, we show that $\psi_t$ is $C^\infty$ along the leaves of $\cW^c(f)$.
In the second step we show that $\psi_t$ is $C^\infty$ along the leaves of $\cW^u(f)$ and $\cW^s(f)$.
A straightforward application of a result of Journ\'e then shows that the flow $\psi$ is $C^\infty$.

To show $C^\infty$ smoothness along the leaves of $\cW^c(f)$ one first must establish that the leaves of $\cW^c(f)$ are $C^\infty$.
A priori, these leaves have only finite smoothness determined by the $C^1$ distance from $f$ to $\varphi_1$.
However in the case under consideration, in which volume has Lebesgue disintegration along $\cW^c(f)$ leaves,
we have more information about the action of $f$ on center leaves.

In particular, since $f$ preserves a continuous vector field $X$ tangent to $\cW^c(f)$-leaves, the derivatives $(Df^k \mid E^c)$,
$k\in\ZZ$ of its iterates along the central
direction are uniformly bounded.
This implies that $f$ is $r$-bunched for every $r>0$; recall \eqref{eq.rbunched}.
Hence, the leaves of $\cW^{cs}(f)$, $\cW^{cu}(f)$ and $\cW^c(f)$ are $C^\infty$,
and the $\cW^s(f)$-holonomies and $\cW^u(f)$-holonomies between $\cW^c(f)$-leaves
are also $C^\infty$.

This in turn implies that the fibers of $\cE$ and the  diffeomorphisms $H_{\xi,\xi'}$ given by
Lemma~\ref{l.homogeneity} are  $C^\infty$.   We will use this information  to conclude that the function $\Delta$ is $C^\infty$
along the fibers of $\cE$, which implies that $X$ is $C^\infty$ along the leaves of $\cW^c(f)$.

\begin{lemma}\label{l.centersmooth} The function $\Delta$ given by Lemma~\ref{l.ac1} is $C^\infty$ along the fibers of
$\cE$, with derivatives varying continuously from fiber to fiber.  Consequently $X$ is $C^\infty$ along the leaves of $\cW^c(f)$,
uniformly in the leaves, as is the flow $\psi_t$.
\end{lemma}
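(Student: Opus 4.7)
The strategy is to combine a 1-dimensional uniqueness principle for measure-preserving homeomorphisms with the Bochner-Montgomery theorem in order to upgrade the regularity of $\Delta$ along the fibers of $\cE$. I first establish a uniqueness principle on each fiber $\cE_v$: any orientation-preserving homeomorphism $\cE_v \to \cE_v$ that preserves $\hm_v$ and fixes some $\xi_0 \in \cE_v$ must be the identity. Indeed, by Lemma~\ref{l.ac1} the measure $\hm_v$ has continuous positive density $\Delta_v$ with respect to $\lambda_v$, so $F_v(\xi) := \int_{\xi_0}^\xi \Delta_v\, d\lambda_v$ defines a homeomorphism $\cE_v \to \RR$ with $F_v(\xi_0) = 0$ that conjugates $\hm_v$ to Lebesgue measure on $\RR$; an orientation-preserving homeomorphism of $(\RR, dx)$ fixing $0$ is necessarily the identity. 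Together with Lemma~\ref{l.homogeneity}, this shows that $H_{\xi_0, \xi_1}$ is the unique orientation-preserving homeomorphism of $\cE_v$ preserving $\hm_v$ that sends $\xi_0$ to $\xi_1$, so in particular it is $C^\infty$.

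Applying this uniqueness to the continuous flow $\phi_t$ generated by $Z = Z_0/\Delta$, I observe that $\phi_t|_{\cE_v}$ is orientation-preserving, preserves $\hm_v$, and sends $\xi_0$ to $\phi_t(\xi_0)$; by uniqueness it therefore agrees with $H_{\xi_0, \phi_t(\xi_0)}$ and is $C^\infty$ for every $t \in \RR$. The one-parameter group property $\phi_{s+t} = \phi_s \circ \phi_t$ and joint continuity of $(t,\xi) \mapsto \phi_t(\xi)$ are immediate from the construction. We are then in the hypotheses of the Bochner-Montgomery theorem: a jointly continuous action of a Lie group by $C^\infty$ diffeomorphisms on a $C^\infty$ manifold is automatically $C^\infty$. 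Applied to $\RR$ acting on $\cE_v$ via $\phi_t$, it yields that $(t,\xi) \mapsto \phi_t(\xi)$ is $C^\infty$, so the infinitesimal generator of $\phi_t$, which is $Z|_{\cE_v}$, is a $C^\infty$ vector field on $\cE_v$. Since $Z_0$ is $C^\infty$ and $Z = Z_0/\Delta_v$, this forces $\Delta_v$ to be $C^\infty$ along the fiber.

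For continuity of fiberwise derivatives across $v$, I work in a $C^\infty$ local trivialization of $\cE$: there the fibers and $Z_0$ depend smoothly on $v$, the family $\{\hm_v\}$ depends continuously on $v$ by Lemma~\ref{l.jimmy}, and the $su$-holonomies composing $H_{\xi,\xi'}$ are $C^\infty$ with continuous base-point dependence, since $f$ is $r$-bunched for every $r$. Running the uniqueness-plus-Bochner-Montgomery argument uniformly in such a trivialization then yields that all fiberwise derivatives of $\Delta$ vary continuously with $v$. Pushing $Z$ down under $p_2$ produces a vector field $X$ on $M$ that is $C^\infty$ along every $\cW^c(f)$-leaf with derivatives varying continuously across leaves, and the flow $\psi_t$ inherits the same regularity by leafwise integration. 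The principal technical obstacle is the upgrade from $C^\infty$-ness of each individual $\phi_t$ to joint $C^\infty$-ness in $(t, \xi)$, which is exactly what Bochner-Montgomery accomplishes once the 1-dimensional uniqueness step identifies the a priori merely continuous flow with the $C^\infty$ diffeomorphisms furnished by Lemma~\ref{l.homogeneity}.
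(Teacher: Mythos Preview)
Your argument is correct and follows a genuinely different route from the paper. The paper works with the graph of $\Delta_v$ in $\cE_v \times \RR$: the transformation rule $\Delta_v(H(\xi)) = \Delta_v(\xi)/\Jac(H)(\xi)$ for any $\hm_v$-preserving $C^\infty$ diffeomorphism $H$ shows that this graph is carried to itself by $C^\infty$ diffeomorphisms of $\cE_v\times\RR$ acting transitively on it, and a theorem of Repov\v{s}--Skopenkov--\v{S}\v{c}epin \cite{RSS96} then forces such a $C^\infty$-homogeneous locally compact set to be a $C^\infty$ submanifold; a Sard-type argument excludes vertical tangencies, and $\Delta_v$ is $C^\infty$. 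Your route instead exploits the one-dimensionality of the fibers head-on: the uniqueness of orientation-preserving $\hm_v$-preserving homeomorphisms with a prescribed value at one point lets you identify the \emph{a priori} merely continuous flow $\phi_t|_{\cE_v}$ with the $C^\infty$ holonomy maps furnished by Lemma~\ref{l.homogeneity} (using that $f$ is $r$-bunched for all $r$), and Bochner--Montgomery then upgrades pointwise-in-$t$ smoothness to joint smoothness of the $\RR$-action, yielding a $C^\infty$ generator $Z$ and hence a $C^\infty$ density $\Delta_v$. Your approach avoids the homogeneity theorem of \cite{RSS96} and makes transparent why dimension one is special; the paper's graph argument, by contrast, does not rely on such a rigidity of the fiber dynamics. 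For the continuity of fiberwise derivatives in $v$ the paper is more explicit, applying the transformation rule for $\Delta$ directly under the $C^\infty$ local $suc$-holonomies between neighboring fibers; your ``running the argument uniformly'' amounts to the same computation once unpacked, since by your uniqueness principle the maps $H_{\xi,\xi'}$ are independent of the chosen $su$-path and hence may be realized locally by holonomies varying $C^\infty$-continuously in all parameters.
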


\begin{proof}
Fix $v\in M$.  For any $\xi\in\cE_v$ and any diffeomorphism $H$ of $\cE_v$ preserving $\hm_v$, we have
\begin{equation}\label{e.deltapres}
\Delta_v(H(\xi)) =   \frac{\Delta(\xi)}{\Jac(H)(\xi)} .
\end{equation}
If $H$ is $C^\infty$, then so is the Jacobian $\Jac(H)$.
Consider the graph of $\Delta_v$:
$$\graph(\Delta_v) = \{ (\xi, \Delta(\xi)) :  \xi \in \cE_v\}\subset \cE_v\times \RR.
$$
Since the function $\Delta$ is continuous, $\graph(\Delta_v)$ is locally compact.
If $H$ is an $\hm_v$-preserving $C^\infty$ diffeomorphism, then (\ref{e.deltapres}) implies that the $C^\infty$ diffeomorphism
$$
(\xi, t) \mapsto (H(\xi), \frac{t}{\Jac(H)(\xi) })
$$
preserves $\graph(\Delta_v)$.

Combining this observation with Lemma~\ref{l.homogeneity}, we obtain that  for any pair of points
$q=(\xi,\Delta_v(\xi))$ and $q' =(\xi',\Delta_v(\xi'))$ in  $\graph(\Delta_v)$, there is a $C^\infty$ diffeomorphism
of $\cE_v\times \RR$ sending $q$ to $q'$ and preserving $\graph(\Delta_v)$.
That is, the locally compact set  $\graph(\Delta_v)$ is $C^\infty$ homogeneous.  A result of
Repov{\v{s}}, Skopenkov and  {\v{S}}{\v{c}}epin \cite{RSS96} implies that $\graph(\Delta_v)$ is a $C^\infty$ submanifold of
$\cE_v\times \RR$ (see also \cite{Wliv}).  Thus $\Delta_v$ is $C^\infty$ off of its  singularities (by ``singularities,"
we mean points where the projection of $\graph(\Delta_v)$ onto $\cE_v$ fails to be a submersion).
But if $\Delta_v$ has any singularities, then it is easy to see that {\em  every point} in $\cE_v$ must be a singularity,
which violates Sard's theorem. Hence $\Delta_v$ has no singularities and therefore is $C^\infty$.

To see that the derivatives of $\Delta_v$ vary continuously as a function of $v$, note that one can
move from the fiber over $v$ to any neighboring fiber by a composition of local $u$, $s$ and $c$-holonomies.
The derivatives of these holonomy maps very continuously with the fiber.  Equation
(\ref{e.deltapres}) implies that the fiberwise derivatives vary continuously.
 \end{proof}

Our next step is to establish the $C^\infty$ smoothness of $X$ along $\cW^s(f)$ and $\cW^u(f)$ leaves.
Note first that because $\psi_t\circ f = f \circ \psi_t$ for all $t$, it follows that
$\psi_t$ preserves the foliations $\cW^s(f)$ and $\cW^u(f)$. To see this, observe that, since $f$ preserves
a nonvanishing continuous vector field, the leaf of $\cW^s(f)$ through $v$ is uniquely characterized
as the set of points $w$ such that
$$
\lim_{n\to\infty} d(f^n(v), f^n(w)) = 0;
$$
since $\psi_t$ commutes with $f$, then for such $v,w$, we will also have
$$
\lim_{n\to\infty} d(f^n(\psi_t(v)), f^n(\psi_t(w))) =  \lim_{n\to\infty} d(\psi_t(f^n(v)), \psi_t(f^n(w)))  = 0;
$$
and so $\psi_t(w) \in \cW^s(f)_{\psi_t(v)}$.

We first show that for every $t$ the restriction of $\psi_t$ to the leaves of $\cW^s(f)$
is uniformly $C^\infty$.  Here we use the property that $\psi_t$ preserves volume.
The basic idea is that $\psi_t$ must also preserves the disintegration of volume
along $\cW^s(f)$ leaves in a foliation box, up to a constant scaling factor along each leaf.
But these disintegrations are $C^\infty$ along $\cW^s(f)$ leaves;  when the leaves are one-dimensional,
this forces $\psi_t$ to be $C^\infty$ along the leaves as well.

The following lemma is well-known (see formula (11.4) in \cite{Beyond}):
\begin{lemma}\label{l.smooth1} Let $f\colon M\to M$ be any $C^\infty$
partially hyperbolic diffeomorphism.
For any foliation box $\cB\subset M$ for $\cW^s(f)$, there is a continuous disintegration
of $m \mid\cB$ along leaves of $\cW^s(f)$ (defined at every point $p\in \cB$).
These disintegrations are equivalent to Riemannian measure in the $\cW^{s}(f)$ leaves.
The densities of the disintegrations are $C^\infty$ along leaves and transversely
%H\"older
continuous. The same is true for $\cW^{u}(f)$.
\end{lemma}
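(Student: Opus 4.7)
The plan is to exhibit the density of the stable disintegration via an explicit convergent infinite product and read off all the regularity claims directly from the formula. Fix a foliation box $\cB$ with smooth transversal $\Sigma$, write $\cW^s_{\loc}(y)=\cB\cap\cW^s(y)$ for $y\in\Sigma$, and let $\lambda_y$ denote Riemannian arc-length on $\cW^s_{\loc}(y)$. Set $\Jac^s f(x):=|\det(D_xf\mid E^s_x)|$. For $p\in\cW^s_{\loc}(y)$, the candidate density relative to $\lambda_y$, normalized to equal $1$ at $y$, is
$$
\rho(p,y) := \prod_{i=0}^{\infty}\frac{\Jac^s f(f^i(y))}{\Jac^s f(f^i(p))}.
$$
This is the standard Pesin/Anosov formula, justified heuristically by the fact that $\lambda_y$ is distorted by $f$ via the factor $\Jac^s f$ while the conditional measures of $m$ must be $f$-invariant up to a leafwise constant.

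To make this rigorous, I would proceed in three steps. First, I would verify the uniform $C^\infty$ convergence of the product along leaves. Since $f$ is $C^\infty$ and the stable leaves are $C^\infty$ submanifolds (varying continuously in the $C^\infty$ topology on compact pieces), $\log\Jac^s f$ is $C^\infty$ along stable leaves with all leaf-derivatives transversely continuous. The uniform stable contraction $d_s(f^i(p),f^i(y))\le C\tau^i d_s(p,y)$, with $\tau<1$, forces
$$
\bigl|\log\Jac^s f(f^i(y)) - \log\Jac^s f(f^i(p))\bigr| \le C'\tau^i,
$$
and differentiating $k$ times along the leaf introduces only powers of $\|Df^i\mid E^s\|$, which likewise decay geometrically. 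Hence the partial products converge in every $C^k$-norm along leaves, and $\rho(\,\cdot\,,y)$ is $C^\infty$. Second, because each partial product $\rho_n$ is jointly continuous in $(p,y)$ together with all its leaf-derivatives, and the geometric-rate convergence is uniform in $y$, all leaf-derivatives of $\rho$ are transversely continuous as claimed.

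Third, I would identify $\rho(\cdot,y)\,d\lambda_y$ (after normalization to a probability) with the Rokhlin disintegration of $m\mid\cB$ along $\cW^s(f)$. By construction $\rho$ satisfies the cocycle identity
$$
\rho(f(p),f(y))\cdot\Jac^s f(p) = \rho(p,y)\cdot\Jac^s f(y),
$$
which translates precisely to $f$-invariance of the resulting conditional measures. Combining this with the absolute continuity of $\cW^s(f)$ given by Lemma~\ref{l.classic} (so the true Rokhlin conditionals are equivalent to $\lambda_y$ for $\hat m$-a.e.\ $y$) and the uniqueness of Rokhlin disintegrations, a standard bootstrap argument forces $\rho(\cdot,y)\,d\lambda_y/\rho(\cdot,y)\lambda_y(\cW^s_{\loc}(y))$ to coincide with the disintegration. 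Since $\rho$ is continuous and positive everywhere, the disintegration extends continuously to every $p\in\cB$. The unstable case follows by applying the same argument to $f^{-1}$.

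The main obstacle is the last identification step: turning the \emph{candidate} formula for $\rho$ into the actual density of the Rokhlin disintegration. All the analytic content (convergence, smoothness, transverse continuity) is essentially immediate from the product formula and uniform stable contraction; what requires care is bridging between this analytic object and the abstract measure-theoretic one, and here the already-established absolute continuity of $\cW^s(f)$ is indispensable. This is precisely the computation carried out in formula~(11.4) of~\cite{Beyond}, which is why the lemma is presented as well-known.
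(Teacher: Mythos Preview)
The paper does not give its own proof of this lemma: it simply records the statement as well-known and points to formula~(11.4) in~\cite{Beyond}. Your proposal is exactly the standard derivation that that citation refers to --- the Pesin--Anosov infinite product for the conditional density along stable leaves --- and your outline of the convergence, leafwise $C^\infty$ regularity, and transverse continuity is correct. So there is nothing to compare: you have supplied the argument the paper merely cites, and you explicitly acknowledge this at the end of your write-up.

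One small comment on exposition: the ``bootstrap'' in your third step is the only place where the argument is not fully spelled out. A cleaner route than invoking uniqueness of Rokhlin disintegrations is to show directly that the family $\{\rho(\cdot,y)\,d\lambda_y\}_{y\in\Sigma}$ integrates up to $m\mid\cB$, using the absolute continuity of stable holonomy together with the explicit formula for its Jacobian (which is again an infinite product of $\Jac^s f$ ratios). This avoids any appeal to the a.e.\ existence of densities and makes the identification constructive rather than by uniqueness. Either way the conclusion stands.
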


\begin{lemma}\label{l.smooth2}
For any $\cW^s(f)$ foliation box $\cB$, any $t\in\RR$, and any $v\in \cB$,
the map $\psi_t$ sends the disintegration $m^{s}_v$
of $m \mid \cB$ along $\cW^{s}(f)$ leaves at $v$ to the disintegration
$m^{s}_{\psi_t(v)}$ of $m\vert_{\psi_t(\cB)}$ along $\cW^{s}(f)$
leaves at $\psi_t(v)$. The same is true for $\cW^{u}(f)$.
\end{lemma}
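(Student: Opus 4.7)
The plan is to deduce the lemma directly from the essential uniqueness of Rokhlin disintegrations, using only three facts already at our disposal: $\psi_t$ is a homeomorphism (as the time-$t$ map of the continuous flow), it preserves the volume $m$, and it maps $\cW^s(f)$-leaves to $\cW^s(f)$-leaves. The strategy is to show that the $\psi_t$-pushforward of the canonical disintegration on $\cB$ is itself a system of conditional measures of $m\mid \psi_t(\cB)$ along $\cW^s(f)$, and then invoke uniqueness.

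First, since $\psi_t$ is a homeomorphism that sends local $\cW^s(f)$-leaves in $\cB$ to local $\cW^s(f)$-leaves in $\psi_t(\cB)$, the image $\psi_t(\cB)$ inherits the structure of a $\cW^s(f)$-foliation box whose local leaf through $\psi_t(u)$ is precisely $\psi_t\bigl(\cW^s_{\loc,\cB}(u)\bigr)$. Shrinking $\cB$ around $v$ if necessary, I may moreover assume that $\psi_t(\cB)$ sits inside a smooth $\cW^s(f)$-foliation box around $\psi_t(v)$, so that the canonical disintegration provided by Lemma~\ref{l.smooth1} is defined on $\psi_t(\cB)$. Consider the pushforward family
\begin{equation*}
\tilde m^s_w := (\psi_t)_\ast m^s_{\psi_{-t}(w)}, \qquad w\in\psi_t(\cB);
\end{equation*}
each $\tilde m^s_w$ is a probability measure supported on the local $\cW^s(f)$-leaf through $w$.

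Next, using $(\psi_t)_\ast m = m$ and the change of variables $w=\psi_t(v)$, for every Borel $A\subset \psi_t(\cB)$,
\begin{equation*}
m(A) \;=\; m\bigl(\psi_{-t}(A)\bigr) \;=\; \int_{\cB} m^s_v\bigl(\psi_{-t}(A)\bigr)\, dm(v) \;=\; \int_{\psi_t(\cB)} \tilde m^s_w(A)\, dm(w).
\end{equation*}
Thus $\{\tilde m^s_w : w\in \psi_t(\cB)\}$ is a disintegration of $m\mid \psi_t(\cB)$ along $\cW^s(f)$-leaves. By the essential-uniqueness clause of Rokhlin's theorem, together with Lemma~\ref{eq.uniquedisintegration} to reconcile the normalizations coming from possibly different foliation-box choices around $\psi_t(v)$, we conclude $\tilde m^s_w = m^s_w$ for $m$-almost every $w$, that is, $(\psi_t)_\ast m^s_v = m^s_{\psi_t(v)}$ for $m$-almost every $v\in\cB$. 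The continuity of the disintegration provided by Lemma~\ref{l.smooth1} then upgrades this to an equality at \emph{every} $v\in\cB$. The case of $\cW^u(f)$ is identical, with $\cW^s(f)$ replaced throughout by $\cW^u(f)$.

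Because this is a pure uniqueness argument, I do not foresee a substantial obstacle; the only real care needed is the normalization bookkeeping when $\psi_t(\cB)$ is compared with an a priori-chosen smooth foliation box around $\psi_t(v)$, which is handled by the rescaling lemma of Section~\ref{ss.disintegration}. In particular, no regularity of $\psi_t$ beyond continuity is used in the argument.
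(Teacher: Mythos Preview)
Your proposal is correct and follows essentially the same route as the paper: the paper also argues that, because $\psi_t$ preserves $m$ and the leaves of $\cW^s(f)$, the pushforward of the disintegration must agree almost everywhere with the disintegration in $\psi_t(\cB)$ by essential uniqueness, and then upgrades to an identity at every point using the continuity of $v\mapsto m^s_v$ from Lemma~\ref{l.smooth1}. Your version simply spells out the change-of-variables and normalization bookkeeping that the paper leaves implicit.
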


\begin{proof}
Denote by $\{m^{s}_v: v\in \cB\}$ the disintegration of $m$ along
$\cW^{s}(f)$ leaves inside the box $\cB$. By Lemma~\ref{l.smooth1},
the map $v\mapsto m^{s}_v$ is continuous.

Fix $t\in \RR$.  Since $\psi_t$ preserves both $m$ and the
leaves of $\cW^s(f)$ , we obtain that
\begin{eqnarray}\label{e.preservenu}
{\psi_t}_\ast m^{s}_v &=& m^{s}_{\psi_t(v)},
\end{eqnarray}
for $m$-almost every $v\in \cB$, where the disintegration on the
right hand side takes place in the box $\psi_t(\cB)$.
 Since $v\mapsto m^{s}_v$ is continuous (on both sides of the equation)
and $\psi_t$ is a homeomorphism, equation \eqref{e.preservenu}
holds everywhere.
\end{proof}

\begin{lemma}\label{l.smooth3}
For every $t\in\psi_t$, the map $\psi_t$ is uniformly $C^\infty$
along $\cW^{s}(f)$ leaves and uniformly $C^\infty$ along $\cW^{u}(f)$ leaves.
\end{lemma}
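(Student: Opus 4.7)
The plan is to exploit the one-dimensionality of $\cW^s(f)$- and $\cW^u(f)$-leaves (which are one-dimensional because $M = T^1 S$ has dimension three with one-dimensional center) together with $\psi_t$-invariance of $m$. With this, the leafwise smoothness of $\psi_t$ reduces to a one-variable change-of-variables identity between two $C^\infty$ densities, which forces the change of variables itself to be $C^\infty$ by a standard bootstrap.

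Concretely, I would fix $t \in \RR$ and a foliation box $\cB$ for $\cW^s(f)$. By Lemma~\ref{l.smooth1}, the conditional measure $m^s_v$ of $m\mid\cB$ along $\cW^s_v$ has the form $\rho_v \, d\lambda_v$, where $\lambda_v$ is arc-length on $\cW^s_v$ and $\rho_v$ is positive, $C^\infty$ along the leaf, with $\rho_v$ and all its leafwise derivatives varying continuously in $v$. By Lemma~\ref{l.smooth2}, $(\psi_t)_\ast m^s_v = m^s_{\psi_t(v)}$. Parametrising $\cW^s_v$ and $\cW^s_{\psi_t(v)}$ by arc length, $\psi_t$ acts between these parameter intervals as a continuous monotone map $\phi$ satisfying
$$
\int_0^s \rho_v(\sigma)\, d\sigma = \int_0^{\phi(s)} \rho_{\psi_t(v)}(\sigma)\, d\sigma.
$$
Differentiating gives
$$
\phi'(s) = \frac{\rho_v(s)}{\rho_{\psi_t(v)}(\phi(s))}.
$$
The right-hand side is $C^\infty$ in $(s,\phi(s))$ and bounded away from $0$ on any compact leaf segment; so a standard bootstrap yields $\phi \in C^\infty$ (inductively, $\phi\in C^k$ forces the right-hand side to be $C^k$, hence $\phi\in C^{k+1}$).

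To get uniformity in $v$, I would use that the densities $\rho_v$ depend continuously on $v$ in the $C^\infty$ leafwise topology, and that $\psi_t(v)$ depends continuously on $v$; together these give continuous dependence of each $\phi^{(k)}$ on $v$, and a finite cover of $M$ by foliation boxes then produces uniform $C^\infty$ bounds for $\psi_t$ along $\cW^s(f)$-leaves. The identical argument with $\cW^u(f)$ in place of $\cW^s(f)$ handles the unstable case.

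The main obstacle I expect to check carefully is the bootstrap step: one uses positivity of $\rho$ so that reciprocals are smooth, and leafwise compactness to bound derivatives; both are automatic here, but they rely crucially on the one-dimensionality of $\cW^s(f)$ and $\cW^u(f)$. In higher dimension, equality of pushforward measures does not determine a map, so the argument would need an additional measure-rigidity input.
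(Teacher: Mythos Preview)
Your argument is correct and is essentially the same as the paper's: the paper's one-line proof says that Lemma~\ref{l.smooth2} forces $\psi_t$ to satisfy an ordinary differential equation along $\cW^s(f)$-leaves with $C^\infty$ (and transversely continuous) coefficients, hence the solutions are $C^\infty$ and vary continuously with the leaf. Your displayed identity $\phi'(s) = \rho_v(s)/\rho_{\psi_t(v)}(\phi(s))$ is precisely that ODE, and your bootstrap is the standard mechanism behind ``$C^\infty$ coefficients $\Rightarrow$ $C^\infty$ solutions''; your remark that the argument hinges on one-dimensionality of the leaves is also exactly the point.
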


\begin{proof}
Lemma~\ref{l.smooth2} implies that $\psi_t$ satisfies an ordinary differential
equation along $\cW^{s}(f)$ leaves with $C^\infty$ (and transversely
continuous) coefficients, and so the solutions are
$C^\infty$ and vary continuously
with the leaf.
\end{proof}

At this point, we have shown that for every $t\in \RR$ and $v\in M$:
\begin{enumerate}
\item the restriction of $\psi_t$ to $\cW^c(f)_v$ is $C^\infty$ (Lemma~\ref{l.centersmooth} );
\item the restriction of $\psi_t$ to $\cW^s(f)_v$ and $\cW^u(f)_v$ are $C^\infty$ (Lemma~\ref{l.smooth3});
\item the map $t\mapsto \psi_t(v)$ is $C^\infty$  (Lemma~\ref{l.centersmooth}).
\end{enumerate}
Moreover, these statements hold uniformly in $t$ and $v$.

Our final tool is the so-called ``Journ\'e Lemma" which allows us to deduce smoothness of a
function by checking along leaves of two transverse foliations with smooth leaves:

\begin{theorem}[Journ{\'e} \cite{Jo88}]\label{t.journe}
Let $\cF_1$ and $\cF_2$ be transverse foliations of a manifold $M$
whose leaves are uniformly $C^\infty$. Let $\psi: M\to \RR$ be any
continuous function such that the restriction of $\psi$ to the
leaves of $\cF_1$ is uniformly $C^{\infty}$ and  the restriction of
$\psi$ to the leaves of $\cF_2$ is uniformly $C^{\infty}$.  Then $\psi$
is uniformly $C^{\infty}$.
\end{theorem}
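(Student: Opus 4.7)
The plan is to proceed by an inductive scheme on the order of regularity, reducing the $C^\infty$ statement to a ladder of Hölder-regularity statements, and proving each Hölder statement by constructing candidate Taylor polynomials along ``broken paths'' combining legs in the two foliations.

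Since $C^\infty = \bigcap_{r\in\NN,\,\alpha\in(0,1)} C^{r+\alpha}$, it suffices to prove the following Hölder version: if $\psi$ is uniformly $C^{r+\alpha}$ along the leaves of $\cF_i$ for $i=1,2$, then $\psi\in C^{r+\alpha}(M)$. The hypotheses being uniform and the leaves uniformly $C^\infty$, this localizes: near any point $p\in M$ we pick a chart in which $\cF_1$-leaves are flattened to horizontal slices $\{y=\text{const}\}$, with coordinates $(x,y)\in\RR^{d_1}\times\RR^{d_2}$, while the $\cF_2$-leaves become uniformly $C^\infty$ graphs over transverse sections (though their dependence on the initial leaf is only continuous, since $\cF_2$ is only a continuous foliation in the ambient manifold).

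I would prove the Hölder statement by induction on $r$. For $r=0$, any two nearby points $p$ and $q$ in a single chart can be joined by a broken path of two legs, one along $\cF_1$ and one along $\cF_2$, of total length comparable to $|q-p|$; the triangle inequality together with the two leafwise Hölder estimates immediately yields the $\alpha$-Hölder bound for $\psi$ globally. For the inductive step, assume the statement holds through order $r-1$; in particular, all partials of $\psi$ of order $\le r-1$ exist and are globally $((r-1)+\alpha)$-Hölder. To produce the candidate Taylor polynomial $P_p$ of order $r$ at $p$, combine the order-$r$ Taylor polynomials of $\psi$ along the two legs of a broken path through $p$: the leaf Taylor expansions supply the pure-$x$ and pure-$y$ coefficients directly, while the mixed coefficients are supplied by applying the inductive hypothesis to lower-order derivatives. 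The leafwise remainder estimates along each leg then give the bound $|\psi(q)-P_p(q-p)|\le C|q-p|^{r+\alpha}$, uniformly in $p$; the Campanato/Whitney characterization of $C^{r+\alpha}$ in terms of such uniform Taylor remainders converts this into the desired global regularity.

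The main obstacle is the handling of mixed partial derivatives: the hypotheses only control pure-leaf derivatives, so it is a priori unclear that the mixed partials even exist, let alone that they enjoy the Hölder estimates required to assemble $P_p$ and verify the remainder bound. This is precisely the reason the induction on $r$ is essential. The previous step promotes pure-leaf $((r-1)+\alpha)$-regularity to global $((r-1)+\alpha)$-regularity, and differentiating that conclusion along each foliation produces exactly the mixed partials of order $\le r$ needed to close the induction at order $r$. Once the Hölder version is established for every $(r,\alpha)$, taking the intersection over all $r$ and $\alpha$ yields the $C^\infty$ conclusion. The uniformity in all steps is automatic from the uniform hypothesis, so ``uniformly $C^\infty$'' follows without extra work.
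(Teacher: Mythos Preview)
The paper does not prove this theorem; it is quoted from Journ\'e~\cite{Jo88} and used as a black box, so there is no in-paper proof to compare against and your proposal must stand on its own.

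Your overall architecture --- reduce to $C^{r+\alpha}$, build candidate Taylor polynomials from leafwise jets, invoke the Campanato/Whitney characterization --- is indeed the skeleton of Journ\'e's argument. The gap is in how you close the induction on $r$. You write that ``differentiating [the $C^{(r-1)+\alpha}$ conclusion] along each foliation produces exactly the mixed partials of order $\le r$.'' It does not. Global $C^{(r-1)+\alpha}$ regularity yields all partials of total order $\le r-1$ (and the order-$(r-1)$ ones are only $\alpha$-H\"older, not $((r-1)+\alpha)$-H\"older as you state). To assemble the degree-$r$ Taylor polynomial you need the \emph{mixed} coefficients of exact order $r$, say $\partial_x^a\partial_y^b\psi$ with $|a|+|b|=r$ and $a,b\neq 0$. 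Obtaining such a coefficient requires differentiating some order-$(r-1)$ partial once more along a leaf; but that partial is merely $\alpha$-H\"older globally, and you cannot apply the inductive hypothesis to it because you have no control of its regularity along the \emph{other} foliation --- that is precisely a mixed-derivative statement of the type you are trying to prove. The circularity you flag as ``the main obstacle'' is real, and your resolution presupposes it away.

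What Journ\'e actually does is avoid computing mixed derivatives altogether: the candidate polynomial at $p$ is built purely from the two leafwise jets, and the Campanato remainder $|\psi(q)-P_p(q-p)|\le C|q-p|^{r+\alpha}$ is obtained by an iterated zig-zag --- comparing leafwise polynomials at a sequence of intermediate points along alternating $\cF_1$- and $\cF_2$-legs at geometrically decreasing scales, and summing the errors. The mixed coefficients then appear only \emph{a posteriori}, as coefficients of the polynomial whose existence Campanato guarantees. Without this device (or an equivalent one, such as the polynomial-interpolation route via Markov's inequality), the induction does not close.
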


Let us use this result to finish the proof of Theorem A.  To show that the flow $\psi$ is
$C^\infty$, we must show that it is $C^\infty$ on $M\times \RR$.  Fix $t$ and $v$,
and consider the restriction of $\psi_t$ to the leaf $\cW^{cs}(f)_v$. This leaf is uniformly
subfoliated by the foliations $\cW^{c}(f)$ and $\cW^s(f)$.  The map $\psi_t$ is uniformly
$C^\infty$ when restricted to the leaves of each of these foliations.
Theorem~\ref{t.journe} implies that $\psi_t$ is uniformly $C^\infty$ along leaves of $\cW^{cs}(f)$.
But the restriction of $\psi_t$ to leaves of $\cW^{u}(f)$ is also  uniformly $C^\infty$.
Applying Theorem~\ref{t.journe} again, we obtain that for each $t$, the map $\psi_t$ is $C^\infty$ on $M$,
uniformly in $t$.  Since $t\mapsto \psi_t(v)$ is $C^\infty$, uniformly in $v$, a final application of
Theorem~\ref{t.journe} gives that $\psi$ is $C^\infty$ on $M\times \RR$.

\bibliographystyle{plain}
\bibliography{bib}

\end{document}